\newtheorem{definition}{Definition}[section]
\newtheorem{proposition}[definition]{Proposition}
\newtheorem{corollary}[definition]{Corollary}
\newtheorem{assumption}[definition]{Assumption}
\newtheorem{theorem}[definition]{Theorem}
\newtheorem{conjecture}[definition]{Conjecture}
\newtheorem{question}[definition]{Question}
\theoremstyle{remark}
\newtheorem{remark}[definition]{Remark}
\title{Eisenstein degeneration of Euler systems}
\newcommand{\into}{\hookrightarrow}
\newcommand{\ZZ}{\mathbf{Z}}
\newcommand{\QQ}{\mathbf{Q}}
\newcommand{\CC}{\mathbf{C}}
\newcommand{\Qp}{\QQ_p}
\newcommand{\Qpb}{\overline{\QQ}_p}
\newcommand{\Qb}{\overline{\QQ}}
\newcommand{\Zp}{\ZZ_p}
\newcommand{\cF}{\mathcal{F}}
\newcommand{\cO}{\mathcal{O}}
\newcommand{\cR}{\mathcal{R}}
\newcommand{\cH}{\mathcal{H}}
\newcommand{\cW}{\mathcal{W}}
\newcommand{\hE}{{\mathbf{E}}}
\newcommand{\hf}{{\mathbf{f}}}
\newcommand{\hg}{{\mathbf{g}}}
\newcommand{\hj}{{\mathbf j}}
\newcommand{\hk}{{\mathbf k}}
\newcommand{\Dcris}{\mathbf{D}_{\mathrm{cris}}}
\renewcommand{\ge}{\geqslant}
\renewcommand{\le}{\leqslant}
\DeclareMathOperator{\GL}{GL}
\DeclareMathOperator{\PR}{PR}
\DeclareMathOperator{\Fil}{Fil}
\DeclareMathOperator{\Frob}{Frob}
\DeclareMathOperator{\Gal}{Gal}
\DeclareMathOperator{\crit}{crit}
\DeclareMathOperator{\Hom}{Hom}
\DeclareMathOperator{\loc}{loc}
\newcommand{\eps}{\varepsilon}
\newcommand{\rig}{\mathrm{rig}}
\newcommand{\et}{\text{\textup{\'et}}} 
\begin{document}

\title[Eisenstein degeneration of Beilinson--Kato classes]{Eisenstein degeneration of Beilinson--Kato classes and circular units}

\author{Javier Polo and \'Oscar Rivero  \\
(\MakeLowercase{with an appendix by} Ju-Feng Wu)}

\begin{abstract}
The aim of this note is to explore the Euler system of Beilinson--Kato elements in families passing through the critical $p$-stabilization of an Eisenstein series attached to two Dirichlet characters $(\psi,\tau)$, a setting which is not covered by the recent works of Benois and B\"uy\"ukboduk. In this context, we establish an explicit connection with the system of circular units, utilizing suitable factorization formulas in a situation where several of the $p$-adic $L$-functions vanish. In that regard, our main results may be seen as an Euler system incarnation of the factorization formula of Bella\"iche and Dasgupta. One of the most significant aspects is that, depending on the parity of $\psi$ and $\tau$, different phenomena arise; while some can be addressed with our methods, others pose new questions. Finally, we discuss analogous results in the framework of Beilinson--Flach classes.
\end{abstract}

\date{\today}

\address{J. P.: Departamento de Matem\'aticas (Universidade de Santiago de Compostela), Santiago de Compostela, Spain}
\email{javier.polo.noche@rai.usc.es}

\address{O. R.: CITMAga and Departamento de Matem\'aticas (Universidade de Santiago de Compostela), Santiago de Compostela, Spain}
\email{oscar.rivero@usc.es}

\address{J.-F. W.: School of Mathematics and Statistics (University College Dublin), Dublin 4, Ireland}
\email{ju-feng.wu@ucd.ie}

\maketitle

\setcounter{tocdepth}{1}
\tableofcontents

\section{Introduction}

The study of Eisenstein series has played a prominent role in the theory of modular forms. One setting that has traditionally been elusive within the framework of Coleman families has to do with the so-called {\it critical Eisenstein series}, which refers to the case where one considers the family corresponding to the critical slope $p$-stabilization of an Eisenstein series. The main result summarizing the construction of families of critical $p$-adic $L$-functions is presented in \cite{bellaiche11a}, where the author discusses the conditions under which such $p$-adic $L$-functions exist, using the eigencurve of modular symbols. The Eisenstein case was later explored in \cite{bellaichedasgupta15}, where the authors established a factorization formula valid for half of the weight space that satisfies a sign condition. In fact, this is one of the key inputs for this work. The $p$-adic $L$-function vanishes in the other half of the weight space, raising the natural question about the derivative along the weight direction in an attempt to capture its arithmetic information.

The theory of critical modular forms and their connection with Euler systems has been explored in the works of Benois and B\"uy\"ukboduk \cite{BB22}, \cite{BB24}, but under the assumption that the critical point corresponds to a cuspidal modular form. In this setting, ramification is measured in terms of the vanishing of the adjoint $p$-adic $L$-function. However, in the Eisenstein case, there is no expected ramification, although the adjoint $p$-adic $L$-function still vanishes.

The Eisenstein case was considered by Loeffler and the second author in \cite{LR1}, where they discussed what they called the {\it Eisenstein degeneration of Euler systems}. This method, which involved using the factorization of $p$-adic $L$-functions to establish connections among different types of Euler systems, was successfully applied in various contexts: diagonal cycles vs Beilinson--Flach classes, Beilinson--Flach vs Beilinson--Kato, and Heegner cycles vs elliptic units. A key step in their argument was the use of the local properties of the {\it big} Euler systems, which enabled a comparison between $p$-adic $L$-functions.

However, these local properties cannot be used when trying to compare the Beilinson--Kato system with circular units, as in this case no local conditions are involved at $p$. Therefore, this work should be seen as a natural continuation and an extension of the methods developed in \cite{LR1}, where we modify our previous approach to cover the Beilinson--Kato case. This case turns out to be ostensibly different, showing in an easier instance some of the properties already envisaged in \cite{BB24} about the behavior of this Euler system. Further, the main result can be interpreted as an Euler-system incarnation of the results of Bella\"iche--Dasgupta.

\subsection{Set-up}

Let $E_{r+2}(\psi,\tau) \in M_k(N, \xi)$ be an Eisenstein series of even weight $k=r+2$, level $\Gamma_1(N)$ and nebentype $\xi = \psi \tau$, where $\psi$ and $\tau$ are primitive Dirichlet characters with conductors $N_1$ and $N_2$, respectively, with $N = N_1N_2$ and $\xi(-1) = (-1)^r$. Let $\hf$ stand for the unique Coleman family passing through the critical slope $p$-stabilization of $f := E_{r+2}(\psi,\tau)$, $f_{\beta} := E_{r+2}^{\crit}(\psi,\tau)$. Unless explicitly stated otherwise, we will assume in the first part of the introduction that $r$ is even and that both $\tau$ and $\psi$ are even.

Along this article, we introduce the following assumption.
\begin{assumption}\label{ass-1}
The critical slope $p$-stabilization $f_{\beta}$ is {\it non-crticial}, i.e., the generalised eigenspace of overconvergent modular forms associated to $f_{\beta}$ is 1-dimensional. Further, the Coleman--Mazur eigencurve is smooth and locally \'etale at the point corresponding to $E_{r+2}^{\crit}(\psi,\tau)$ in the family $\hf$.
\end{assumption}

This is known to hold in a high level of generality (see e.g. \cite[\S A6]{LR1}), so this must be seen as a very mild assumption. We later discuss several equivalent formulations for this condition. From now on, let $X$ stand for a local uniformizer at that point. Similarly, we write $V(\hf)^*$ (resp. $V^c(\hf)^*$) for the non-compactly supported (resp. compactly supported) cohomology of $\hf$. Let $\mathcal H_{\Gamma}$ be the distribution algebra of the cyclotomic extension $\Gamma \cong \ZZ_p^{\times}$, i.e., the global sections of the rigid analytic space associated with $\mathrm{Spf}\, \ZZ_p[\![\Gamma]\!]$. 
Let $\hj \colon \Gamma \hookrightarrow \mathcal H_{\Gamma}^{\times}$ the universal character, which we regard as a character of the Galois group by composition with the cyclotomic character.

To formulate the main result of this paper, we need to introduce the following two Euler systems.
\begin{enumerate}
\item[(a)] The {\bf circular unit} $c(\tau)(r) \in H^1(\QQ, \QQ_p(\tau^{-1})(1+r) \otimes \mathcal H_{\Gamma}(-\hj))$, constructed as an inverse limit of units along the cyclotomic tower when $\tau$ is even. For $r=0$, we just write $c(\tau)$. Further, if $r=0$ and we specialize at the trivial character, the corresponding class may be written as an explicit product of cyclotomic units.
\item[(b)] The {\bf Kato Euler system} (sometimes called Beilinson--Kato Euler system) attached to a Coleman family, which is a class \[ \kappa(\hf) \in H^1(\QQ, \frac{1}{X} V^c(\hf)^*(-\hj)). \] As we later discuss, this class depends on the choice of an auxiliary Dirichlet character $\chi$.
\end{enumerate}

Observe that $\hf$ is generically cuspidal, but is Eisenstein at one point; in particular, this allows us to consider the projection of $\frac{1}{X}V^c(\hf)^*$ onto the quotient \[ \frac{\frac{1}{X}V^c(\hf)^*}{V(\hf)^*} \cong \QQ_p(\tau^{-1})(1+r) \otimes \mathcal H_{\Gamma}(-\hj). \] Considering the projection in cohomology, we obtain a class \[ \kappa(f_{\beta}) \in H^1(\QQ, \QQ_p(\tau^{-1})(1+r) \otimes \mathcal H_{\Gamma}(-\hj)).\]

Letting $\Gamma^+ = \Gamma/\langle \pm 1 \rangle$, and writing $\hj_+$ for its universal character, we can see the class in the smaller subspace $H^1(\QQ, \QQ_p(\tau^{-1})(1+r) \otimes \mathcal H_{\Gamma}(-\hj_+))$, since the Iwasawa cohomology of the other half of the weight space vanishes. Our comparison of Beilinson--Kato classes and circular units will involve three different kinds of $p$-adic $L$-functions:
\begin{enumerate}
\item[(i)] The (one-variable) Kubota--Leopoldt $p$-adic $L$-function $L_p(\chi)$ attached to an even and non-trivial Dirichlet character $\chi$.
\item[(ii)] The (two-variable) Kitagawa--Mazur $p$-adic $L$-function $L_p(\hg,\chi)$ attached to a Coleman family $\hg$ and a Dirichlet character $\chi$.
\item[(iii)] The (one-variable) adjoint $p$-adic $L$-function attached to a Coleman family $\hg$, and denoted $L_p(\operatorname{Ad} \hg)$.
\end{enumerate}

We will see that the explicit reciprocity laws connect the class $c(\tau)$ with the $p$-adic $L$-function $L_p(\tau)$, and the class $\kappa(f_{\beta})$ with a ratio of $p$-adic $L$-functions which involve both $L_p(\hf)$ and $L_p(\operatorname{Ad} \hf)$, together with another auxiliary Kitagawa--Mazur $p$-adic $L$-function.

\subsection{Main results}

The first main result of this article may be seen as a direct comparison between the Kato Euler system and that of circular units. In order to state it as simple as possible in the introduction, we will assume that the leading term in the argument of Proposition \ref{prop:leading-term} corresponds to $n=0$. This means that we assume that $\kappa(f_{\beta})$ is equal to the class $\kappa^*(f_{\beta})$ introduced later in the text. For a more precise version of the result, see \ref{sec:def}. The Kato Euler system depends on the choice of an odd Dirichlet character $\chi$, and under the current sign assumptions, $L_p(\hf,\chi)$ vanishes at $(f_{\beta},r/2+1)$. Write $L_p'(f_{\beta},\chi)$ for the derivative of the two-variable $p$-adic $L$-function $L_p(\hf,\chi)$ restricted to the line $s=k/2$, and evaluated at $f_{\beta}$.

For the moment, we need an extra set of assumptions, which assert that the derivatives of the relevant $p$-adic $L$-functions do not vanish. The second part of the assumption may be relaxed, as we discuss in the body of the text.
\begin{assumption}\label{ass-2}
The adjoint $p$-adic $L$-function $L_p(\operatorname{Ad} \hf)$ has a simple zero at the critical Eisenstein point corresponding to $E_{r+2}^{\crit}(\psi,\tau)$. Moreover, $L_p'(f_{\beta},\chi)$ is non-zero.
\end{assumption}

It is known, by the results of Bella\"iche, \cite[\S9]{eigenbook}, that  $L_p(\operatorname{Ad} \hf)$ vanishes at $f_{\beta}$, but we do not know whether or not it presents a simple zero. The result we state also involves the {\it logarithmic distribution}: for a continuous character $\sigma : \ZZ_p^{\times} \rightarrow \CC_p$, the function $\frac{d^k \sigma}{dz^k} \cdot \frac{z^k}{\sigma(z)}$ is a constant, and $\log^{[k]} \in \CC_p$ is defined to be this constant. In addition, we write $\mathcal E_N$ for the two-variable function corresponding to the product of the bad Euler factors introduced in Definition \ref{def:bad-Euler}. Finally, let $\lambda$ denote the function defined over the Iwasawa algebra such that its specialization at a character $\sigma \colon t \mapsto t^s \xi(s)$ corresponding to an integer $s$ and a Dirichlet character $\xi$ of $p$-power conductor is $\xi^{-1}(N_2) \langle N_2 \rangle^{-s+1}$.

\begin{theorem}\label{theorem-main}
Suppose that $r$ is even and that the characters $\psi$ and $\tau$ are both even. Then, under Assumption \ref{ass-1} and Assumption \ref{ass-2}, the following equality holds in $H^1(\QQ, \QQ_p(\tau^{-1})(1+r) \otimes \mathcal H_{\Gamma}(-\hj))$: \[ \kappa(f_{\beta}) = \left( \frac{C \cdot \lambda \cdot \mathcal E_N \cdot L_p'(f_{\beta},\chi)}{L_p'(\operatorname{Ad} f_{\beta})} \right) \cdot L_p(\psi,\hj+1) \cdot c(\tau)(r), \] where $C$ is a constant which does not depend on $j$.
\end{theorem}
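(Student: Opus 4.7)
The plan is to reduce the claim to an identity of Iwasawa-theoretic $p$-adic $L$-functions via an injective regulator, and then to invoke the factorization of Bella\"iche--Dasgupta. By Shapiro's lemma, the cohomology group $H^1(\QQ, \QQ_p(\tau^{-1})(1+r) \otimes \mathcal H_\Gamma(-\hj))$ is identified with the Iwasawa cohomology of $\QQ_p(\tau^{-1})(1+r)$ along the cyclotomic tower. Under the standing parity hypotheses, this module is torsion-free of generic rank one over $\mathcal H_\Gamma$, and the Perrin-Riou / Coleman big logarithm map $\mathcal L$ is injective on it. It therefore suffices to verify the equality after applying $\mathcal L$.

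On the right-hand side, Coleman's explicit reciprocity law for cyclotomic units yields, up to an explicit unit,
\[
\mathcal L\bigl(c(\tau)(r)\bigr) = L_p(\tau, \hj + r + 1).
\]
For the left-hand side, I would apply Kato's explicit reciprocity law to the family $\hf$: the image of the raw Beilinson--Kato class equals $\mathcal E_N \cdot \lambda \cdot L_p(\hf, \chi)$, normalized by the congruence/period factor that, after passage to the $1/X$ quotient, materializes as $L_p(\operatorname{Ad}\hf)$. Under Assumption \ref{ass-2} this ratio has a removable singularity at $f_\beta$, and the leading-term (l'H\^opital) analysis yields
\[
\mathcal L\bigl(\kappa(f_\beta)\bigr) = \frac{\lambda \cdot \mathcal E_N \cdot L_p'(f_\beta,\chi)}{L_p'(\operatorname{Ad} f_\beta)} \cdot \pi_{*}(\hj),
\]
where $\pi_{*}(\hj)$ records the effect of the projection from $\tfrac{1}{X}V^c(\hf)^*$ onto the quotient $\QQ_p(\tau^{-1})(1+r) \otimes \mathcal H_\Gamma(-\hj)$ at the Eisenstein point, and the equalities here are up to non-zero scalars independent of $\hj$.

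The identification $\pi_{*}(\hj) = L_p(\psi, \hj+1) \cdot L_p(\tau, \hj+r+1)$, up to a non-zero scalar, is precisely the content of the one-variable Bella\"iche--Dasgupta factorization of the cyclotomic restriction of the Kitagawa--Mazur $p$-adic $L$-function at the critical Eisenstein point: the cyclotomic variable factors as a product of Kubota--Leopoldt $L$-functions of $\psi$ and $\tau$. Substituting and dividing by $\mathcal L\bigl(c(\tau)(r)\bigr) = L_p(\tau, \hj+r+1)$ yields exactly the scalar in the theorem, and injectivity of $\mathcal L$ lifts this to the claimed equality of cohomology classes.

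The main obstacle is the leading-term analysis at $f_\beta$: one must simultaneously control the double vanishing of $L_p(\hf,\chi)|_{s=k/2}$ and of $L_p(\operatorname{Ad}\hf)$, which requires Assumptions \ref{ass-1} and \ref{ass-2} to furnish a well-defined uniformizer $X$ and non-degenerate derivatives. In parallel, tracking how the Galois-theoretic structure of $\hf$ at the Eisenstein point interacts with \cite{bellaichedasgupta15} so as to produce exactly the factor $L_p(\psi, \hj+1)$, rather than $L_p(\tau, \hj+r+1)$ which is absorbed into $c(\tau)(r)$, is the delicate bookkeeping where the parity conventions of $r$, $\psi$, and $\tau$ enter crucially, and where the choice of the auxiliary odd character $\chi$ has to be reconciled with the sign constraints of the factorization.
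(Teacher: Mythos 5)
Your overall architecture matches the paper's: reduce to an identity of $p$-adic $L$-functions by applying a regulator, use the reciprocity law for circular units on one side and a leading-term (l'H\^opital) analysis of Kato's reciprocity law in the family $\hf$ on the other, and close the gap with the Bella\"iche--Dasgupta factorization. The paper justifies the reduction by invoking the Bloch--Kato conjecture to show that $H^1(\QQ, \QQ_p(\tau^{-1})(1+r)\otimes\cH_{\Gamma}(-\hj))$ is one-dimensional, rather than by injectivity of the big logarithm on local Iwasawa cohomology; your route would additionally require injectivity of $\loc_p$ on the global module, which you do not address, but this is a minor point.

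The genuine gap is your treatment of the logarithmic factor. The Bella\"iche--Dasgupta factorization used here reads (up to an explicit rational factor)
\[
L_p(f_{\beta},s) \;=\; \lambda(s)\,\log_p^{[r+1]}(s)\,L_p(\psi,s)\,L_p(\tau,s-r-1),
\]
and $\log_p^{[r+1]}$ is \emph{not} a scalar independent of the cyclotomic variable: it is an element of $\cH_{\Gamma}$ vanishing to order one at every character $z\mapsto z^j\chi(z)$ with $0\le j\le r$, hence a non-unit. Your identification of $\pi_*(\hj)$ with $L_p(\psi,\hj+1)\cdot L_p(\tau,\hj+r+1)$ ``up to a non-zero scalar'' silently discards this factor, so as written your two sides differ by $\log^{[r+1]}$ and the argument does not close. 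The paper cancels it through the normalization of the Coleman map at the bad fibre: the interpolated vector satisfies $\mathbf{a}^-_{\hf}\bmod X = d_r\, t^{r+1}\,\omega_{f_r}\otimes e_{-(r+1)}$, and multiplication by $t^{r+1}$ corresponds to multiplication by $\log^{[r+1]}$ on the $\cH_{\Gamma}$ side, so the regulator of $\kappa(f_{\beta})$ itself carries exactly one factor of $\log^{[r+1]}$, which cancels against the one in the factorization. This is why the final formula contains no logarithmic term; without this step one would either obtain a spurious $\log^{[r+1]}$ or be forced to divide by a non-invertible element of $\cH_{\Gamma}$. (A smaller bookkeeping issue: your shift $L_p(\tau,\hj+r+1)$ is inconsistent with the paper's $L_p(\tau,\hj-r)$ appearing in the specialization $s=\hj+1$ of the factorization; make sure the twist by $r$ in $c(\tau)(r)$ is matched to the same convention on both sides.)
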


The constant $C$ arises during the interpolation of Ohta's canonical differentials, and it may be seen as the {\it less understood} piece of the picture; roughly speaking, it has to do with the interpolation of the Eichler--Shimura isomorphisms around critical Eisenstein points. Observe that the factors $L_p'(f_{\beta},\chi)$ and $L_p'(\operatorname{Ad} f_{\beta})$ only depend upon the weight, but we have opted to include them separately to emphasize this dependence. 
This means that we have the equality \[ \kappa(f_{\beta}) = \tilde C \cdot \lambda \cdot \mathcal E_N \cdot L_p(\psi,\hj+1) \cdot c(\tau)(r), \] where $\tilde C$ is an arbitrary constant. In particular, it can be zero.

The main steps involved in the proof of the theorem can be summarized below.
\begin{enumerate}
\item[(1)] Deformation of the Beilinson--Kato elements in families passing through a critical Eisenstein point. The main result in this direction is stated as Theorem \ref{thm:bk-families}. Observe that the critical Eisenstein situation requires a careful study of the variation in Coleman families, in a setting where the arguments of \cite{BB22} and \cite{BB24} do not directly work.
\item[(2)] Use of the reciprocity laws for circular units and Beilinson--Kato elements (in families), which allow us to rephrase the statement of Theorem \ref{theorem-main} in terms of $p$-adic $L$-functions. This is further developed in Section \ref{sec:circular} and Section \ref{sec:bk-elts}.
\item[(3)] Interpolation of the canonical differentials $\eta_{\hf}$ and $\omega_{\hf}$ in families, together with a development of the explicit reciprocity laws around the {\it critical Eisenstein} point. The former is done in Section \ref{sec:dRcoh}, and the latter in Theorem \ref{theorem:rec-law-critical}.
\item[(4)] The Bloch--Kato conjecture for the trivial representation (twisted by a Dirichlet character and a power of the cyclotomic character). This shows that the classes $\kappa(f_{\beta})$ and $c(\tau)(r)$ belong to the same one-dimensional space.
\item[(5)] A factorization formula for the Hida--Rankin $p$-adic $L$-function when one of the forms is Eisenstein, following the work of Bertolini--Darmon \cite{bertolinidarmon14}. However, one must be cautious at this point when dealing with the different choices of periods.
\item[(6)] Bella\"iche--Dasgupta's factorization formula \cite{bellaichedasgupta15}, expressing the $p$-adic $L$-function of a critical Eisenstein series as the product of two Kubota--Leopoldt $p$-adic $L$-functions and a logarithmic term.
\end{enumerate}

{\bf The odd case.} If $r$ is even, $\tau$ and $\psi$ are both odd, and we further assume that $\tau(p) \neq 1$, one can directly claim that $\kappa(f_{\beta})=0$, since Iwasawa cohomology is zero along that half of the weight space, i.e., $H^1(\QQ, \QQ_p(\tau^{-1})(1+r) \otimes \mathcal H_{\Gamma}(-\hj_+)) = 0$. This means that the class $\kappa(\hf)$ lifts to an element in $H^1(\QQ, V(\hf)^*(-\hj_+))$. In this case, we can project $V(\hf)^*$ onto the cohomology of the quotient \[ \frac{V(\hf)^*}{V^c(\hf)^*} \cong \QQ_p(\psi^{-1}) \otimes \mathcal H_{\Gamma}(-\hj_+)), \] and this yields a class \[ \kappa(f_{\beta})_2 \in H^1(\QQ, \QQ_p(\psi^{-1}) \otimes \mathcal H_{\Gamma}(-\hj_+)). \] This class belongs to the one-dimensional space spanned by the circular unit Euler system $c(\psi)(-1)$. Unfortunately, our methods are not enough to get a good description of that class, and this will be developed in the sequel to this paper, extending the theory we have developed so far. This may be seen as an instance of {\it exceptional zero phenomenon}, where the vanishing, however, is not caused by a zero in an Euler factor.

\vskip 12pt

{\bf A conjectural generalization of \cite{bellaichedasgupta15}.} Assume now that $(-1)^r = -\psi(1)$. This is the situation, for instance, if $r$ is odd and the character $\tau$ is also odd; in that case, the class $\kappa(f_{\beta})$ is not expected to be zero, or at least not due to trivial sign reasons. If we analyze the $p$-adic $L$-functions of Theorem \ref{theorem-main}, we see that $L_p(f_{\beta},\chi)$ admits now a factorization formula (and, a priori, it is non-zero), but $L_p(\hf)$ is zero. 

In that case, we formulate a conjecture that may be seen as a natural analogue of Bella\"iche's secondary $p$-adic $L$-functions, and also as an extension of the work of Bella\"iche--Dasgupta. More precisely, we propose the following formula in Conjecture \ref{conj:fact}. Here, let $L_p'(\hf)$ stand for the weight derivative of $L_p(\hf)$; write $L_p'(\hf)(r,s)$ for the (one-variable) cyclotomic $p$-adic $L$-function corresponding to specializing $L_p(\hf)$ at $k=r$.

\begin{conjecture}
If $(-1)^s = -\psi(-1)$, the following factorization formula holds (up to multiplication by some explicit constant): \[ L_p'(f_{\beta})(s) = \lambda N_2^{-s} \log^{[r+1]} \cdot L_p(\psi,s) \cdot L_p(\tau,s-r-1), \] where $\lambda$ is the aforementioned function over the Iwasawa algebra and the derivative is taken along the weight direction.
\end{conjecture}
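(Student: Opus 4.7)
The plan is to adapt the Euler-system strategy underlying Theorem \ref{theorem-main} to the present sign regime, in which the roles of cyclotomic and weight derivatives are swapped: where Theorem \ref{theorem-main} used the non-vanishing cyclotomic derivative $L_p'(f_\beta,\chi)$ to extract information about $\kappa(f_\beta)$, here $L_p(\hf)$ vanishes identically along the line $k=r+2$ on the half $(-1)^s=-\psi(-1)$, and it is the weight derivative which should admit an explicit factorization.

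First I would establish the analogue of Theorem \ref{theorem-main} in the present sign situation, producing an identity of the shape
\[ \kappa(f_\beta)_2 \;=\; \left(\frac{C\cdot\lambda\cdot\mathcal E_N \cdot L_p'(\hf)}{L_p'(\operatorname{Ad} f_\beta)}\right)\cdot L_p(\tau, \hj - r) \cdot c(\psi)(-1), \]
where $\kappa(f_\beta)_2\in H^1(\QQ,\QQ_p(\psi^{-1})\otimes \mathcal H_\Gamma(-\hj_+))$ is the projection of the Beilinson--Kato class onto the Eisenstein quotient $V(\hf)^*/V^c(\hf)^*$. The construction of Theorem \ref{thm:bk-families} and the explicit reciprocity law at the critical Eisenstein point (Theorem \ref{theorem:rec-law-critical}), combined with the reciprocity law for $c(\psi)(-1)$ producing $L_p(\psi,s)$, should translate this identity into an analytic factorization of $L_p'(f_\beta)(s)$.

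Second, I would match the resulting product of $L$-factors to the right-hand side of the conjecture. The Bella\"iche--Dasgupta formula \cite{bellaichedasgupta15}, now valid on the complementary half, together with the Eisenstein degeneration of the Hida--Rankin $L$-function employed in step (5) of the proof of Theorem \ref{theorem-main} and the factorization of the adjoint $L$-function at critical Eisenstein points, should yield precisely the predicted product
\[ \lambda\cdot N_2^{-s}\cdot \log^{[r+1]}\cdot L_p(\psi,s)\cdot L_p(\tau, s-r-1), \]
up to an overall constant absorbing the period ambiguity. Here the logarithmic factor $\log^{[r+1]}$ should emerge from the interpolation of Ohta's canonical differentials (Section \ref{sec:dRcoh}), exactly as it appears in Bella\"iche--Dasgupta, while $\lambda\cdot N_2^{-s}$ tracks the bad-Euler-factor normalization built into $\mathcal E_N$.

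The principal obstacle is twofold. First, the text notes that the methods of the present paper are not strong enough to directly describe $\kappa(f_\beta)_2$ in this sign regime; bridging this gap appears to require a finer analysis of the variation of Ohta's differentials near the Eisenstein point, and in particular a better understanding of the $k$-dependence of the period constant $C$ of Theorem \ref{theorem-main}. Second, one must confirm the appropriate analogue of Assumption \ref{ass-2} (in particular, the simplicity of the relevant adjoint zero, since Bella\"iche \cite[\S9]{eigenbook} only guarantees vanishing) to ensure that $L_p'(f_\beta)(s)$ is itself the genuine leading term and not a higher-order derivative. It is precisely these two issues that prevent us from presently upgrading the formula from a conjecture to a theorem.
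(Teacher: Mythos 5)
The statement you are asked to prove is left as a conjecture in the paper: the authors state explicitly that its proof ``seems beyond the techniques we have used in this work'' and offer only heuristic evidence for it (end of Section \ref{sec:other-cases}). There is therefore no proof in the paper to compare against, and your proposal --- which you yourself concede falls short of a proof --- does not close the gap either. The obstacles you name at the end are real and are essentially the ones the authors identify; to that extent your assessment is accurate. But it is worth flagging that your plan, as written, runs in the wrong logical direction. The paper's own evidence (the case $r$ odd, $\psi$ even, $\tau$ odd) is: \emph{if} one had an Euler-system comparison of $\kappa(f_{\beta})$ with $c(\tau)$ analogous to Theorem \ref{theorem-main}, then since $L_p(f_{\beta},\chi)$ is non-zero but $L_p(\hf)$ vanishes on this half of weight space, the weight derivative $L_p'(f_{\beta})$ would be forced to carry the factor $L_p(\tau,\hj-r)$ --- i.e.\ the Euler-system identity and the factorization are equivalent, and the paper explicitly says the comparison with circular units ``depends on Conjecture \ref{conj:fact}.'' Your step one proposes to first establish that Euler-system identity and then deduce the factorization from it, but there is no independent route to the identity: the reciprocity law (Theorem \ref{theorem:rec-law-critical}) only expresses the Coleman-map image of the Kato class in terms of the very product $L_p(\hf,\chi)\cdot L_p(\hf)$ whose derivative you are trying to factor, so the argument is circular without new input.

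Two smaller points. First, you frame the identity through $\kappa(f_{\beta})_2$ and $c(\psi)(-1)$; that is the relevant pairing in the sub-case $r$ even, $\psi,\tau$ odd (where $\kappa(f_{\beta})$ itself vanishes), whereas the paper's supporting evidence for the conjecture comes from the complementary sub-case $r$ odd, $\psi$ even, $\tau$ odd, where the non-vanishing class is $\kappa(f_{\beta})$ itself and the expected partner is $c(\tau)$. The conjecture covers both, but mixing the two branches obscures which reciprocity law is supposed to produce which Kubota--Leopoldt factor. Second, your attribution of the $\log^{[r+1]}$ factor to the interpolation of Ohta's differentials is doubtful: in the proof of Theorem \ref{theorem-main} the paper remarks that the logarithmic term arising from the differentials is \emph{cancelled} by the Coleman map, and in \cite{bellaichedasgupta15} the $\log^{[r+1]}$ has a different origin (the comparison of critical and ordinary modular symbols via the operator $\Theta_k$). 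None of this invalidates your proposal as a research plan, but it confirms that the statement remains, as in the paper, a conjecture rather than a theorem.
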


Later in the text we discuss theoretical evidence for the conjecture, whose proof seems beyond the scope of the methods developed in this note and in related works.

The organization of this note is as follows: we begin with an introduction of the Galois representations and periods involved in our comparison, with emphasis on the interpolation of the canonical differentials in a neighborhood of a critical Eisenstein point. Next, we fix our conventions for the systems of circular units and Kato elements, discussing the vanishing of the $p$-adic $L$-function of the adjoint and one of the Kitagawa--Mazur $p$-adic $L$-functions. This allows us to develop the study of the degeneration of Beilinson--Kato elements and carry out the comparison between both Euler systems. Section \ref{sec:other-cases} serves to discuss the limitation of our methods in the cases where the characters $\psi$ and $\tau$ are not both odd, and raises some interesting questions that we leave for forthcoming work; more specifically, there are two main unsolved issues: (a) the description of $\kappa(f_{\beta})_2$; (b) the extension of the factorization formula of \cite{bellaichedasgupta15}. We end up the note with an analogy with the simpler case of Beilinson--Flach elements, which is further developed in a sequel of this note, where we extend the methods of this article. 

In Appendix \ref{section: big Galois representations}, we discuss in detail the construction of rank-$2$ big Galois representations. The materials presented in this appendix should be well known to the experts, but we were unable to find a detailed reference. We also note that the appendix is written in a way that suggests a possible generalization to higher-rank groups.


\subsection{Relation with other works}

The starting point of this article was extending the results of \cite{LR1} to a setting where the same arguments were not available, since the comparisons developed in that work were based on the {\it strong} local conditions satisfied by the families of cohomology classes.

A similar idea in the framework of congruences among modular forms was explored in \cite{RiRo}. In that situation, the modulo $p$ congruences suggested the possibility of an extra vanishing in certain cases, which led us to analyze a second derivative with techniques that went beyond the explicit reciprocity laws. It would be interesting to study whether those results, mainly due to Sharifi and Fukaya--Kato, admit a counterpart in this scenario.

The idea of using the degeneration of Eisenstein series is not new in the theory of modular forms. For instance, in the setting of irregular weight one modular forms, when one takes the Eisenstein series of characters $(\chi_1,\chi_2)$, with $\chi_1(p)=\chi_2(p)$, there are three (ordinary) families passing through it: the families of Eisenstein series $\hE(\chi_1,\chi_2)$ and $\hE(\chi_2,\chi_1)$, but also a third family which is generically cuspidal. This idea is explored in the work of \cite{BDV} to prove Perrin-Riou's conjecture via a comparison between Beilinson--Kato systems and Heegner cycles.

Urban explored in \cite{Urban20}, with very different techniques, the phenomenon of Eisenstein congruences to study the construction of Euler systems. It would be interesting to understand the relationship between both approaches.

Finally, we expect to extend the study of these phenomena to settings beyond $\GL_2$, such as the Siegel case, where one can study endoscopy phenomena emerging in the theory of Saito--Kurokawa and Yoshida lifts.

\subsection{Acknowledgements}
We would like to thank Kazim B\"uy\"ukboduk for very helpful conversations and for his comments on an earlier version of the manuscript. The second author is especially grateful to David Loeffler, with whom he learned many of the techniques developed in this paper during the writing of the prequel \cite{LR1} and the subsequent discussions around the topic. The authors extend their gratitude to Samit Dasgupta, Alexandre Maksoud, Alice Pozzi, and Victor Rotger for informative conversations and correspondence around the topic of this article. J.P. and O.R. express their deepest gratitude to Ju-Feng Wu for writing the appendix to this paper.

During the preparation of this article, the authors were supported by PID2023-148398NA-I00, funded by MCIU/AEI/10.13039/501100011033/FEDER, UE.

\section{Galois representations and periods}

For this section, we recall some of the terminology and standard results around Galois representations that were explored in earlier work by Loeffler and the second author \cite{LR1}. The main novelty is the discussion of the interpolation of the differentials attached to a Coleman family passing through a critical Eisenstein point, which extends the results developed in loc.\,cit.

\subsection{Eisenstein series and Galois representations}

Let $p>2$ be a fixed prime number. Let $\psi, \tau$ be Dirichlet characters and $r \ge 0$ an integer with $\psi \tau(-1) = (-1)^r$. Let $N_1$ and $N_2$ denote the conductors of $\psi$ and $\tau$, respectively, and let $N = N_1 N_2$. Throughout this article, we assume that $p\nmid N$. Choose an embedding $\QQ(\psi, \tau) \hookrightarrow L$, where $\QQ(\psi, \tau) \subset \CC$ is the finite extension of $\QQ$ generated by the values of $\psi$ and $\tau$, and $L$ is a finite extension of $\Qp$.

We write $f = E_{r+2}(\psi, \tau)$ for the classical modular form of level $\Gamma_1(N)$ and weight $r+2$ given by 
\[
E_{r+2}(\psi, \tau) =  (*) + \sum_{n \ge 1} q^n \sum_{\substack{n = d_1 d_2 \\ (N_1, d_1) = (N_2, d_2) = 1}} \psi(d_1) \tau(d_2) d_2^{r+1},
\]
where $(*)$ is the constant defined e.g in \cite[Ch. 4]{diamondshurman}. In particular, it is equal to $\delta(\psi) L(\tau,1-k)/2$, where $\delta(\psi)=1$ if $\psi$ is trivial and 0 if not, and $L(\tau, \cdot)$ is the Dirichlet $L$-series attached to the character $\tau$. Note that we may regard the $q$-expansion coefficients $a_n(f)$ of $f = E_{r+2}(\psi, \tau)$ as elements of $L$. 

The $p$-th Hecke polynomial of $f$ is \[ x^2 - (\psi(p)+\tau(p)p^{r+1})x + \psi \tau(p)p^{r+1} = (x-\psi(p))(x-\tau(p)p^{r+1}). \] Then, associated to the Eisenstein series $E_{r+2}(\psi,\tau)$, there are two different $p$-stablisations: one ordinary and one of critical slope, with eigenvalues $\alpha := \psi(p)$ and $\beta := p^{r+1} \tau(p)$, respectively. We denote these eigenforms by $f_{\alpha}$ and $f_{\beta}$. In terms of $q$-expansions, \[ f_{\beta}(q) = f(q) - \alpha f(q^p), \] and similarly for $f_{\alpha}(q)$. 

\begin{definition}
We say $f_{\beta}$ is {\it non-critical} if the generalised eigenspace of overconvergent modular forms $M_{r+2}^{\dag}(\Gamma)_{(T=f_{\beta})}$ associated to $f_{\beta}$ is 1-dimensional.
\end{definition}

In \cite[Rk. 1.5]{bellaichedasgupta15} the authors discuss how to reinterpret the criticality condition in terms of the non-vanishing of a $p$-adic $L$-value. The next definition plays a crucial role for establishing the properties of the Galois representations.

\begin{definition}
We say the Eisenstein series $f = E_{r+2}(\psi, \tau)$ is \emph{$p$-decent} if either $r > 0$ or $r = 0$, and for every prime $\ell \mid Np$, either the conductor of $\psi / \tau$ is divisible by $\ell$, or $(\psi/\tau)(\ell) \ne 1$. 
\end{definition}

We now fix notation for Galois characters.
\begin{itemize}
\item Let $\ell$ be a prime. We write $\Frob_\ell$ for an arithmetic Frobenius element at $\ell$ in $\operatorname{Gal}(\Qb / \QQ)$; this depends on the choice of an embedding of $\Qb$ into $\Qb_{\ell}$, and is well defined modulo the inertia subgroup for this embedding.

\item The $p$-adic cyclotomic character $\operatorname{Gal}(\Qb / \QQ) \to \Qp^\times$ is denoted by $\eps$, and $\eps(\Frob_\ell) = \ell$ for $\ell \ne p$.

\item If $\chi$ is a Dirichlet character, we interpret $\chi$ as a character $\Gal(\Qb / \QQ) \to \CC^\times$ unramified at all primes $\ell$ not dividing the conductor, and mapping $\Frob_\ell^{-1}$ to $\chi(\ell)$ for all those primes $\ell$.
\end{itemize}

The following result, due to Soul\'e \cite{soule-81}, summarizes the different possibilities that we have for Galois representations whose traces are prescribed by the Eisenstein eigenvalues of the Hecke operators. We state it here with the conventions of \cite{bellaichechenevier06}.

\begin{theorem}[Soul\'e]
\label{thm:soule}
If $f$ is a $p$-decent Eisenstein series, there are exactly three isomorphism classes of continuous Galois representations $\rho: G_\QQ \to \GL_2(L)$ which are unramified at primes $\ell \nmid Np$ and satisfy $\operatorname{tr} \rho(\operatorname{Frob}_\ell^{-1}) = a_\ell(f)$. These are as follows:
\begin{enumerate}
\item The semisimple representation $\psi \oplus \tau \eps^{-1-r}$.

\item Exactly one non-split representation having $\tau \eps^{-1-r}$ as a subrepresentation. This representation splits locally at $\ell$ for every $\ell \ne p$, but does not split at $p$, and is not crystalline (or even de Rham).

\item Exactly one non-split representation having $\psi$ as a subrepresentation. This representation splits locally at $\ell$ for every $\ell \ne p$, and is crystalline at $p$.
\end{enumerate}
\end{theorem}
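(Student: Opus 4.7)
The plan is to reduce the classification to a Galois cohomology computation, then invoke Soul\'e's theorem on the cohomology of cyclotomic Tate twists together with the $p$-decency hypothesis to pin down ramification. First, by Chebotarev density and the Brauer--Nesbitt theorem, the semisimplification of any $\rho$ as in the statement is determined by its traces and must equal $\psi \oplus \tau\eps^{-1-r}$; the $p$-decency hypothesis forces these two characters to be distinct (the potential equality occurs only for $r=0$ with $\psi=\tau$, which $p$-decency excludes since then $\psi/\tau = 1$ is trivial on all $\ell \mid Np$), so case (1) is unambiguous. If $\rho$ is not semisimple it is a non-split extension in one of two directions. Setting $\eta := \tau\psi^{-1}\eps^{-1-r}$ and $S := \{\ell : \ell \mid Np\infty\}$, isomorphism classes of non-split extensions of the form $0 \to \tau\eps^{-1-r} \to \rho \to \psi \to 0$ correspond to points of $\mathbf{P}\bigl(H^1(G_{\QQ,S}, \eta)\bigr)$, while those with $\psi$ as sub correspond to $\mathbf{P}\bigl(H^1(G_{\QQ,S}, \eta^{-1})\bigr)$. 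The theorem thus reduces to showing that both of these $H^1$ groups are one-dimensional and to identifying the local behaviour of the distinguished class.

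Second, I would eliminate ramification away from $p$. For each $\ell \mid N$, a direct computation using inflation-restriction along the inertia subgroup together with the local Euler--Poincar\'e formula shows that $H^1(G_{\QQ_\ell}/I_\ell, \eta^{I_\ell})$ vanishes precisely when either $\ell$ divides the conductor of $\psi/\tau$ (so $\eta^{I_\ell}=0$) or $(\psi/\tau)(\ell) \neq 1$ (so Frobenius has no fixed vector on $\eta^{I_\ell}$); the same applies to $\eta^{-1}$. This is exactly the $p$-decency condition. Hence every class in $H^1_S$ restricts trivially at each $\ell \mid N$, which simultaneously reduces the global computation to $H^1(\ZZ[1/p], \eta^{\pm 1})$ and shows that $\rho|_{G_{\QQ_\ell}}$ is an extension of unramified characters with distinct Frobenius eigenvalues (again by $p$-decency), so it splits for every $\ell \ne p$, yielding the asserted local splittings in cases (2) and (3).

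Third, I would compute the cohomology at $p$. Using inflation-restriction from $\QQ(\psi,\tau)/\QQ$ to strip off the finite-order characters, both $H^1(\ZZ[1/p], \eta)$ and $H^1(\ZZ[1/p], \eta^{-1})$ reduce to Soul\'e's computation of $H^1(\mathcal{O}_K[1/p], \QQ_p(n))$ for the appropriate cyclotomic field $K$ and Tate twist $n$. A Hodge--Tate weight count shows that non-split de Rham extensions can only exist in the direction with $\psi$ as sub and $\tau\eps^{-1-r}$ as quotient, since only there does the sub have lower Hodge--Tate weight than the quotient; in this direction Soul\'e provides a one-dimensional cohomology generated by a cyclotomic-unit-type class that is crystalline at $p$, giving case (3). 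For the opposite direction, Bloch--Kato duality (or a direct Poitou--Tate computation) again forces $H^1$ to be one-dimensional, and the class necessarily lies outside $H^1_g$ since no de Rham extension is available with this Hodge--Tate pattern; this gives case (2), a unique non-split, non-de Rham extension.

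The main obstacle is the third step: one must control the Bloch--Kato filtration $H^1_e \subseteq H^1_f \subseteq H^1_g \subseteq H^1$ at $p$ precisely enough to conclude that the one-dimensional global cohomology maps to the crystalline line in one direction and to $H^1/H^1_g$ in the other, rather than merely showing that some non-split extension exists. The $p$-decency hypothesis is used throughout: without it, additional ramified extensions at primes dividing $N$ would contribute to $H^1_S$ and Soul\'e's dimension count would fail to pin the cohomology down to a single projective class in each direction.
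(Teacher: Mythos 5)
The paper does not give a proof of this statement beyond citing Bella\"iche--Chenevier \cite{bellaichechenevier06}, \S 5.1, and the argument you outline (Brauer--Nesbitt for the semisimplification, classification of non-split extensions by $\mathbf{P}(H^1(G_{\QQ,S},\eta^{\pm1}))$ with $\eta=\tau\psi^{-1}\eps^{-1-r}$, dimension count via Soul\'e plus decency, and the Bloch--Kato filtration at $p$ to separate cases (2) and (3)) is exactly the route taken there. Two steps, however, are not correct as written. In your second step you locate the role of $p$-decency in the vanishing of the \emph{unramified} local cohomology $H^1(G_{\QQ_\ell}/I_\ell,\eta^{I_\ell})$, with criterion ``$\ell\mid\mathrm{cond}(\psi/\tau)$ or $(\psi/\tau)(\ell)\neq 1$''. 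That group in fact vanishes unconditionally: it is nonzero only if $\eta(\Frob_\ell)=1$, i.e.\ $(\psi/\tau)(\ell)=\ell^{1+r}$, which is impossible since a root of unity cannot equal an integer greater than $1$ (likewise for $\eta^{-1}$). Moreover, vanishing of the unramified part does not by itself show that every global class restricts trivially at $\ell$; for that you need the full $H^1(G_{\QQ_\ell},\eta^{\pm1})$ to vanish. By the local Euler--Poincar\'e formula this amounts to $\eta^{\pm1}\not\cong 1,\eps$ on $G_{\QQ_\ell}$; the only condition not automatic is $\eta^{-1}\cong\eps$ locally, i.e.\ $\psi/\tau$ unramified at $\ell$ with $(\psi/\tau)(\ell)=\ell^{r}$, which can occur only when $r=0$ and $(\psi/\tau)(\ell)=1$ --- precisely what $p$-decency excludes. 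So decency is needed to kill $H^2(G_{\QQ_\ell},\eta^{-1})$ and hence the full local $H^1$ in direction (3), not the unramified $H^1$. (Also, in your first step, $\psi$ and $\tau\eps^{-1-r}$ are distinct simply because one has finite order and the other does not; decency is irrelevant there.)

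The second soft spot is in case (2): you argue the class ``necessarily lies outside $H^1_g$ since no de Rham extension is available,'' but the zero class does lie in $H^1_g$. To conclude that the representation is non-split and non--de Rham at $p$ you must first show that the global class has nonzero localization at $p$. This follows from the local splitting at all $\ell\neq p$ established above together with the vanishing of the locally-everywhere-trivial subgroup $\ker\bigl(H^1(G_{\QQ,S},\eta)\to\bigoplus_v H^1(G_{\QQ_v},\eta)\bigr)$, which by Poitou--Tate is dual to a subquotient of $H^2(G_{\QQ,S},\psi\tau^{-1}\eps^{2+r})$ and is killed by Soul\'e's theorem. With these two repairs your argument agrees with the one in the cited reference.
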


\begin{proof}
This follows from cases of the Bloch--Kato conjecture due to Soul\'e; see \cite[\S 5.1]{bellaichechenevier06} for the statement in this form.
\end{proof}

If $f$ is not $p$-decent, there will be extra representations in case (3), but we can repair the statement by only considering representations which are assumed to be unramified (resp.~crystalline) at each prime $\ell \ne p$ (resp.~at $\ell = p$) where the hypothesis of being decent fails.

\subsection{Families of representations}\label{subsec:representations}

Proceeding as in \cite[\S A.4,A.5]{LR1}, we can naturally associate four Galois representations to $f_\beta$: $V(f_\beta)$, $V(f_\beta)^*$, $V^c(f_\beta)$, and $V^c(f_\beta)^*$. The superscript ${}^c$ always stands for compact support. All of them are two dimensional, with natural one dimensional invariant subspaces. In particular, all of them are non-split extensions, and $V(f_\beta)^*$ is isomorphic to the dual of $V(f_\beta)$ (with both being de Rham at $p$), while $V^c(f_\beta)^*$ is isomorphic to the dual of $V^c(f_\beta)$.

For the representation $V^c(f_{\beta})$, there is a short exact sequence of $L$-vector spaces \[ 0 \longrightarrow L(\tau \eps^{-1-r}) \longrightarrow V^c(f_\beta) \longrightarrow L(\psi) \longrightarrow 0, \] and from here we get another short exact sequence of $L$-linear Galois representations for $V(f_{\beta})$: \[ 0 \longrightarrow L(\psi) \longrightarrow V(f_\beta) \longrightarrow L(\tau \eps^{-1-r}) \longrightarrow 0. \]
Similarly, for $V(f_{\beta})^*$, there is a short exact sequence
\[ 0 \longrightarrow L(\tau^{-1} \eps^{1 + r}) \longrightarrow V(f_\beta)^* \longrightarrow L(\psi^{-1}) \longrightarrow 0, \]
while for the compactly supported cohomology we have \[ 0 \longrightarrow L(\psi^{-1}) \longrightarrow V^c(f_\beta)^* \longrightarrow L(\tau^{-1} \eps^{1 + r}) \longrightarrow 0. \]
There is a natural map $V^c(f_\beta)^* \to V(f_\beta)^*$ whose image is the $\tau^{-1}\eps^{1+r}$ subrepresentation of the latter; see e.g. the discussion around \cite[Rk. 5.10]{bellaichedasgupta15}.

Since the objective is considering families of modular forms passing through $f_{\beta}$, we need to make an assumption about the behavior of the eigencurve at that point. More precisely, with this purpose in mind, we assume for the rest of the text the following condition.

\begin{assumption}
The Coleman--Mazur--Buzzard cuspidal eigencurve $\mathcal C^0$ of tame level $\Gamma_1(N)$ is smooth at $f_{\beta}$ and locally \'etale over the weight space.     
\end{assumption}

This assumption is automatic when $f$ is {\it decent} and {\it non-critical}; as discussed in \cite[\S A]{LR1}, these are fairly general conditions. Moreover, this means that may take an affinoid open $U = \mathrm{Spa}(A_U, A_U^{\circ})$ around $f_{\beta}$ mapping isomorphically to its image in the weight space.

Let $\hf$ be the universal eigenform over $U$, which is an overconvergent cuspidal eigenform with coefficients in $A_U$. We also write $\beta_{\hf} \in A_U^{\times}$ for the Hecke $U(p)$-eigenvalue of $\hf$. There is also a canonical Galois pseudo-character $t_{\hf} \colon G_{\QQ} \rightarrow A_U$ satisfying \[ t_{\hf}(\Frob_{\ell}^{-1}) = a_{\ell}(\hf) \] for all good primes $\ell$.

According to Proposition \ref{prop: big Galois representations} and Remark \ref{remark: dual construction}, there exist Galois representations $V(\hf)$, $V^c(\hf)$ (resp.,  $V(\hf)^*$, $V^c(\hf)^*$), free of rank-$2$ over $A_U$ whose traces equal to $t_{\hf}$ (resp., $t_{\hf}^*$). Moreover, by Corollary \ref{cor: injection of big Galois representations}, we have injections of Galois representations \[
    V^c(\hf) \hookrightarrow V(\hf) \quad (\text{resp., }V^c(\hf)^*\hookrightarrow V(\hf)^*).
\] 
We refer the reader to Appendix \ref{section: big Galois representations} for the detailed construction of these representations.

Let $Y$ be the modular curve of level $\Gamma_1(N) \cap \Gamma_0(p)$ over $\QQ$ and let $\overline{Y}$ be the base change of $Y$ to $\overline{\QQ}$. Let $V = \mathrm{Spa}(A_V, A_V^{\circ})$ be the image of $U$ under the weight map (note that we shrink $U$, if necessary, so that $U$ is isomorphic to $V$) and let $\lambda_V: \ZZ_p^\times \rightarrow A_V^\times$ be the universal weight of $V$. Let $s \geq 1+s_V$ (cf. Remark \ref{remark: some remarks on the notion of weights}) and consider the {\'e}tale cohomology $H^1_{\mathrm{\acute{e}t}}(\overline{Y}, \mathscr{D}_{\lambda_V}^s)$ (cf. Definition \ref{def: etale cohomology for affinoid weights}). From the construction of $V(\hf)$, one sees that it is a Hecke-equivariant direct summand of $H^1_{\mathrm{\acute{e}t}}(\overline{Y}, \mathscr{D}_{\lambda_V}^s)$ (and is independent to $s$). In particular, there exists a natural Hecke-equivariant projection \[
    \operatorname{Pr}_{\hf}: H^1_{\mathrm{\acute{e}t}}(\overline{Y}, \mathscr{D}_{\lambda_V}^s) \rightarrow V(\hf).
\]

By \cite[Thm. 1]{bellaichechenevier06}, $t_{\hf}$ has maximal reducibility ideal. Then, it follows that the fiber of $V(\hf)$ at $k=r$, which is exactly $V(f_\beta)$, must be a non-split extension. The same applies to the other three modules, showing that all four of $V(f_\beta), V^c(f_\beta), V(f_\beta)^*$ and $V^c(f_\beta)^*$ are non-split extensions in the setting of Soul\'e's theorem.

Since the map $V^c(f_\beta) \to V(f_\beta)$ is not the zero map, $V^c(\hf)$ is not contained in $\mathfrak{m} \cdot V(\hf)$. Thus we may find a basis $(e_1, e_2)$ of $V(\hf)$, and an integer $r \ge 1$, such that $(e_1, X^r e_2)$ is a basis of $V^c(\hf)$, where $X$ is a uniformizer of $\mathfrak{m}$. We have $X V(\hf) \subset V^c(\hf)$, and this means there is a chain of inclusions
\[ \dots \supset \tfrac{1}{X} V^c(\hf) \supset V(\hf) \supset V^c(\hf) \supset X V(\hf) \supset \dots \]
with all of the successive quotients l-dimensional over $L$, and alternately equal to either $\psi$ or $\tau \varepsilon^{-1-r}$ as Galois modules. Similarly, we have a chain
\begin{equation}\label{chain}
\dots \supset \tfrac{1}{X} V^c(\hf)^* \supset V(\hf)^* \supset V^c(\hf)^* \supset X V(\hf)^* \supset \dots
\end{equation}
with quotients alternately isomorphic to either $\psi^{-1}$ or $\tau^{-1} \varepsilon^{1+r}$.

\subsection{Crystalline periods}\label{sec:dRcoh}

The aim of this section is describing the canonical periods arising in the reciprocity laws, following the seminal developments of Ohta \cite{ohta00} and Kings--Loeffler--Zerbes \cite[\S10]{KLZ17}. This is one of the subtlest and most technical parts of the paper, since we do not have a good control over the interpolation of the canonical differential.

We begin by establishing some notation. Fix an embedding $\Qb \into \Qpb$, that allow us to see $\Gal(\Qpb/\Qp)$ as a subgroup of $\Gal(\Qb/\QQ)$. We write $\cR$ for the Robba ring over $\Qp$, $U = \mathrm{Spa}(A_U, A_U^{\circ})$ be an affinoid open subset of $\mathcal{C}^0$, and let $\cR_U = \cR \mathop{\hat\otimes}_{\QQ_p} A_U$. A $(\varphi, \Gamma)$-module is endowed with a Frobenius action (denoted by $\varphi$) and a Galois action, for which we take a topological generator $\gamma$ of $\Gal(\Qpb/\Qp)$. As usual, let $t \in \cR$ be the period for the cyclotomic character, so $\varphi(t) = pt$ and $\gamma(t) = \varepsilon(\gamma) t$. For $\delta \in A_U^\times$, let $\cR_U(\delta)$ be the rank-1 $(\varphi, \Gamma)$-module over $\cR_U$ generated by an element $e$ which is $\Gamma$-invariant and satisfies $\varphi(e) = \delta e$.\footnote{ We warn the reader that this is not the standard notation for rank-1 $(\varphi, \Gamma)$-modules. We use this notation to avoid introducing more technical terminology. } Finally, when $D$ is a $(\varphi, \Gamma)$-module over $\cR$ (or $\cR_U$), write $\Dcris(D) = D[1/t]^{\Gamma}$, with its filtration $\Fil^n \Dcris(D) = (t^n D)^{\Gamma}$. Let $D(\hf)^* = \mathbf{D}^\dag_\rig(V(\hf)^*)$, and similarly $D^c(\hf)^*$.

The space of homomorphisms \[ \cR_U\left(\frac{\beta_\hf}{\psi(p) \tau(p)}\right)(1 + \hk) \to D(\hf)^*\] is a finitely-generated $A_U$-module, by the main theorem of \cite{KPX}. In particular, it is free of rank 1, and there is a Zariski-dense set of $x \in A_U$ where the fiber is 1-dimensional. Let $\cF^+ D(\hf)^*$ be the image of a generator of this map, and $\cF^-$ the quotient, so that we have a short exact sequence
\[ 0 \to \cF^+ D(\hf)^* \to D(\hf)^* \to \cF^- D(\hf)^* \to 0.\]

If $f_{\beta}$ is non-critical, the submodule $\mathcal F^+ D(f_{\beta})^*$ is saturated, and $\mathcal F^- D(\hf)^*$ is free of rank 1 over $\mathcal R_U$. Further, $\mathcal F^{\pm} D(\hf)^*$ define a triangulation of $D(\hf)^*$ over $U$.  

Following the previous discussion, we can make the following definition.

\begin{definition}
Let $b_{\hf}^+$ be any fixed (and non-canonical) isomorphism \[ b_{\hf}^+: \Dcris\left(\cF^+ D(\hf)^*(-1-\hk) \right) \cong A_U. \]
\end{definition}

If $g$ is a non-critical classical specialization of $\hf$, with $g$ of weight $k+2$ for some $k \ne r$ then we have a \emph{canonical} isomorphism between the fibers of the above modules at $k$, given by the image modulo $\cF^-$ of the class in $\Fil^{k+1} \Dcris\left(V(g) \right)$ of the differential form $\omega_g$ associated to $g$. Note that here we are using the comparison isomorphism between de Rham and \'etale cohomology crucially.

So, for each such $g$, there must exist a non-zero constant $c_g \in L^\times$ such that $b_{\hf}^+$ specializes to $c_g \omega_g$.

\begin{proposition}
There exists some scalar $c_{f_\beta} \in L^\times$ such that
\[ b_{f_\beta}^+ = c_{f_\beta} \eta_{f_\beta}. \]
\end{proposition}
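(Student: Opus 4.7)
The plan is to compute the crystalline Frobenius eigenvalue carried by the specialization of $b^+_{\hf}$ at $f_\beta$, and to identify it with the $\eta_{f_\beta}$-eigenline inside $\Dcris(V(f_\beta)^*(-1-r))$ by using the explicit reducible structure of $V(f_\beta)^*$ and the interpolation of the canonical differentials.

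First I would unwind the construction. By definition, $\cF^+ D(\hf)^*$ is the image of a generator of the rank-one $(\varphi,\Gamma)$-module $\cR_U(\beta_\hf/(\psi(p)\tau(p)))(1+\hk)$, so the twisted module $\cF^+ D(\hf)^*(-1-\hk)$ is canonically isomorphic to $\cR_U(\beta_\hf/(\psi(p)\tau(p)))$ with \emph{trivial} $\Gamma$-action. Consequently, $\Dcris$ is free of rank one over $A_U$ with $\varphi$-eigenvalue $\beta_\hf/(\psi(p)\tau(p))$. Specializing at $f_\beta$, where $\beta = \tau(p)p^{r+1}$, yields a non-zero element of the one-dimensional $L$-vector space $\Dcris(\cF^+ D(f_\beta)^*(-1-r))$ carrying Frobenius eigenvalue $p^{r+1}/\psi(p)$. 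Pushing this through the inclusion $\cF^+ \hookrightarrow D$ gives a Frobenius eigenvector with the same eigenvalue in $\Dcris(D(f_\beta)^*(-1-r)) = \Dcris(V(f_\beta)^*(-1-r))$; this element is non-zero because the functor $\Dcris = ((-)[1/t])^\Gamma$ is left exact and the inclusion of $(\varphi,\Gamma)$-modules is an injection of torsion-free $\cR$-modules.

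Next I would exploit the crystallinity of $V(f_\beta)^*$. The representation $V(f_\beta)$ sits in the non-split exact sequence with sub $L(\psi)$, which is case (3) of Theorem \ref{thm:soule} and therefore crystalline; dualizing preserves crystallinity, so $\Dcris(V(f_\beta)^*)$ is two-dimensional with $\varphi$-eigenvalues $\alpha^{-1} = \psi(p)^{-1}$ coming from the quotient $L(\psi^{-1})$ and $\beta^{-1} = (\tau(p)p^{r+1})^{-1}$ coming from the sub $L(\tau^{-1}\varepsilon^{1+r})$. After the cyclotomic twist by $\varepsilon^{-(1+r)}$ the two eigenvalues become $p^{r+1}/\psi(p)$ and $\tau^{-1}(p)$ respectively, and our specialization must sit on the first of these eigenlines. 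Untwisting, this is precisely the $\psi(p)^{-1}$-eigenline in $\Dcris(V(f_\beta)^*)$.

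Finally I would identify that eigenline with $L \cdot \eta_{f_\beta}$ using the interpolation of $\omega_{\hf}$ and $\eta_{\hf}$ developed earlier in Section \ref{sec:dRcoh}. Because $\omega_{\hf}$ is constructed as the family version of the Hodge-filtration differential, its specialization $\omega_{f_\beta}$ lies in $\Fil^{r+1}\Dcris(V(f_\beta)^*)$, which at the critical Eisenstein point coincides with the image of $\Dcris$ of the subrepresentation $L(\tau^{-1}\varepsilon^{1+r})$, namely the $\beta^{-1}$-eigenline. The complementary class $\eta_{f_\beta}$ therefore spans the $\alpha^{-1} = \psi(p)^{-1}$-eigenline, which is exactly the line hit by the specialization of $b^+_\hf$. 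This gives $b^+_{f_\beta} = c_{f_\beta}\,\eta_{f_\beta}$, and $c_{f_\beta} \neq 0$ because $b^+_\hf$ is a generator and the induced map $\Dcris(\cF^+) \hookrightarrow \Dcris(D)$ is injective at the fiber. The main obstacle is the last identification — that the interpolated $\omega_\hf$ matches the Hodge-sub line at $f_\beta$ — which is exactly the content of Ohta-style period interpolation around the critical Eisenstein point and is responsible for the characteristic swap between $\omega$ and $\eta$ compared to nearby cuspidal specializations (where the analogous argument places the image on the $\omega_g$-direction instead).
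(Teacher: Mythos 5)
Your argument is correct and lands on the same soft conclusion as the paper --- both $b_{f_\beta}^+$ and $\eta_{f_\beta}$ generate the same one-dimensional $L$-line, hence differ by a scalar in $L^\times$ --- but the way you identify that line is genuinely more explicit than the paper's. The paper asserts the isomorphism $\Dcris(\cF^+ D(f_\beta)^*) \cong \Dcris(V(f_\beta)^*_{\mathrm{quo}})$ at $X=0$, then passes through the duality between $V(f_\beta)^*_{\mathrm{quo}}$ and $V^c(f_\beta)_{\mathrm{quo}}$ and the comparison with the coherent cohomology $H^1(X,\omega^{-r}(-\mathrm{cusps}))$, which is by construction the home of $\eta_{f_\beta}$. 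You instead compute the crystalline Frobenius eigenvalue of the fiber of $\cF^+$ (namely $\beta_{\hf}/(\psi(p)\tau(p))\cdot p^{-(1+r)} = \psi(p)^{-1}$ after untwisting), invoke the crystallinity of $V(f_\beta)^*$ coming from case (3) of Soul\'e's theorem to split $\Dcris(V(f_\beta)^*)$ into its two $\varphi$-eigenlines, and observe that the Hodge line is the sub-line $L(\tau^{-1}\varepsilon^{1+r})$ --- the $\theta$-criticality of $f_\beta$ --- so that $\eta_{f_\beta}$ spans the complementary $\psi(p)^{-1}$-line. What your route buys is a concrete mechanism for the $\omega/\eta$ swap that the paper only records as a remark after the proposition; what the paper's route buys is independence from the eigenspace splitting (it works directly with the quotient). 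Two points you should make explicit rather than leave implicit: (i) the non-vanishing of the fiber at $f_\beta$ of the generator of $\Hom\bigl(\cR_U(\beta_{\hf}/\psi(p)\tau(p))(1+\hk), D(\hf)^*\bigr)$ --- equivalently the saturation of $\cF^+ D(f_\beta)^*$ --- is precisely where the non-criticality hypothesis of Assumption \ref{ass-1} enters, and does not follow from left-exactness of $\Dcris$ alone; and (ii) the eigenspace decomposition of $\Dcris(V(f_\beta)^*)$ requires $\alpha \neq \beta$, which holds since $v_p(\psi(p)) = 0 \neq r+1 = v_p(\tau(p)p^{r+1})$.
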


\begin{proof}
At $X = 0$, we have an isomorphism
\[ \Dcris(\cF^+ D(f_\beta)^*) \cong \Dcris(V(f_\beta)^*_{\mathrm{quo}}), \]
and we have a duality between $V(f_\beta)^*_{\mathrm{quo}}$ and $V^c(f_\beta)_{\mathrm{quo}}$, where the subscript $\mathrm{quo}$ stands for the quotient in the corresponding filtrations, as defined in \S\ref{subsec:representations}. This allows us to see $b_{f_\beta}^+$ as a basis of the space \[ \Dcris(V^c(f_\beta)_{\mathrm{quo}}) \cong H^1_{\mathrm{dR}, \mathrm{c}}(Y, \mathscr{V}_r)[\mathbb{T} = f_\beta] = H^1(X, \omega^{-r}(-\mathrm{cusps}))[\mathbb{T} = f_\beta],\footnote{ Here, we use the same notation as in Assumption \ref{assump: multiplicity-one}.} \] where the latter stands for the higher coherent cohomology of the compactification $X$ of $Y$ (cf. \cite[\S A3]{LR1}).
Therefore, there exists some scalar $c_{f_\beta} \in L^\times$ such that
\[ b_{f_\beta}^+ = c_{f_\beta} \eta_{f_\beta}, \] although a priori we do not have any control over that constant.
\end{proof}

\begin{remark}
It is interesting to note that the element $b_{\hf}^+$ ``generically'' interpolates the $\Fil^1$ vectors $\omega_g$ for specializations $g$ of weight $\ne r$, but at the bad weight $k = r$, it interpolates the $\Fil^0$ vector $\eta_{f_\beta}$ instead.
\end{remark}

Proceeding as before, we can make the following definition.

\begin{definition}
Let $a_{\hf}^-$ be any fixed (and non-canonical) isomorphism  \[ a_{\hf}^-: \Dcris\left(\cF^- D(\hf)^* \right) \cong A_U. \]
\end{definition}

In particular, if $g$ is a non-critical classical specialization of $\hf$, then $a_{\hf}^-$ specializes to $d_g \eta_g$, where $d_g$ is some arbitrary non-zero constant.

\begin{proposition}
There exists some scalar $d_{f_\beta} \in L^\times$ such that
\[ a_{f_\beta}^- = d_{f_\beta} \omega_{f_\beta}. \]
\end{proposition}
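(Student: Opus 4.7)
The argument follows in close parallel to the proof of the preceding proposition, with the roles of $\omega$ and $\eta$, of $\cF^+$ and $\cF^-$, and of the Galois-theoretic sub and quotient in \eqref{chain} all swapped. At $X = 0$, one has a canonical identification
\[ \Dcris(\cF^- D(f_\beta)^*) \cong \Dcris(V(f_\beta)^*_{\mathrm{sub}}), \]
where the subscript ``sub'' refers to the subobject $L(\tau^{-1}\eps^{1+r}) \subset V(f_\beta)^*$ appearing in the short exact sequence of Section \ref{subsec:representations}. The underlying mechanism is identical to that of the previous proposition: since the pseudocharacter $t_{\hf}$ has maximal reducibility ideal by \cite[Thm.~1]{bellaichechenevier06}, the extension $V(f_\beta)^*$ is necessarily non-split, which forces the $(\varphi,\Gamma)$-module quotient $\cF^- D(\hf)^*$ --- the piece which generically pairs with the direction of $\eta_g$ --- to specialize at the critical Eisenstein point into the subobject, rather than the quotient, of the Galois filtration.

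Dualizing then yields an identification with $\Dcris(V^c(f_\beta)_{\mathrm{sub}})$, exploiting the natural duality between the sub objects of $V^c(f_\beta)$ and $V(f_\beta)^*$ (which are $L(\tau \eps^{-1-r})$ and $L(\tau^{-1}\eps^{1+r})$ respectively). Through the comparison isomorphism between \'etale and de Rham cohomology, combined with the Hodge filtration analysis for the modular curve $X$, this space is identified with the one-dimensional line
\[ H^0(X, \omega^{r+2})[\mathbb{T} = f_\beta], \]
which is spanned by the holomorphic differential $\omega_{f_\beta}$ attached to $f_\beta$. Since $a_{f_\beta}^-$ is a nonzero element of this line, one obtains $a_{f_\beta}^- = d_{f_\beta} \omega_{f_\beta}$ for some scalar $d_{f_\beta} \in L^\times$; exactly as in the previous proposition, we have no a priori control on this constant.

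The main obstacle in carrying this out rigorously is justifying the initial identification: one must verify that the triangulation of $D(\hf)^*$ specializes well at the critical Eisenstein point, and in particular that the ``swap'' between sub and quotient of the Galois filtration at $f_\beta$ is indeed forced by the non-split extension structure guaranteed by maximal reducibility, rather than arising from an accidental splitting or from a subtlety in taking saturations of the $(\varphi,\Gamma)$-submodules in the family. The resulting scalar $d_{f_\beta}$ will, together with the scalar $c_{f_\beta}$ from the previous proposition, ultimately contribute to the ``less understood'' constant $C$ appearing in Theorem \ref{theorem-main}.
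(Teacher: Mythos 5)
Your argument is essentially the paper's own proof, just written out in more detail: at $X=0$ one identifies $\Dcris(\cF^- D(f_\beta)^*)$ with $\Dcris(V(f_\beta)^*_{\mathrm{sub}})$, dualizes to the subobject of $V^c(f_\beta)$, and lands in the line spanned by $\omega_{f_\beta}$, with no control on the resulting scalar. The only point worth flagging is that the paper inserts a Tate twist $(-1-r)$ in this identification (so that, as noted in the subsequent remark, the comparison is one of $\varphi$-modules with a filtration shift by $r+1$), which your version omits, though this does not affect the existence of the scalar $d_{f_\beta}$.
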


\begin{proof}
At $X=0$, we have an isomorphism \[ \Dcris(\cF^- D(f_\beta)^*(-1-r)) \cong \Dcris(V(f_\beta)^*_{\mathrm{sub}}). \] Taking into account the dualities, and proceeding as before, we may claim that there exists a scalar $d_{f_{\beta}} \in L^{\times}$ such that $a_{f_{\beta}}^- = d_{f_{\beta}} \omega_{f_{\beta}}$.
\end{proof}

\begin{remark}
The isomorphism we have used in the proof of the previous proposition is an isomorphism of the underlying $\varphi$-modules. Following the discussion after \cite[Prop. A8.1]{LR1}, there is a shift in the filtration degree by $r+1$.
\end{remark}

Observe that while in \cite{LR1} we only needed the interpolation of $\omega_{\hg}$, we now incorporate in the picture the differential $\eta_{\hg}$, that will play a role in the situations where the critical family is dominant. See e.g. \cite[\S10]{KLZ17} for a discussion of the general theory of the canonical differentials and their interpolation in families.

\section{Circular units}\label{sec:circular}

The main aim of this work is exploring the degeneration of the Euler system of Beilinson--Kato classes when it passes through a critical Eisenstein series. In those cases, we expect to recover the system of {\it circular units}. In order to properly introduce it, let $\chi$ be an even, primitive, and non-trivial Dirichlet character of conductor $N$ (with $p \nmid N$), taking values in $L$. Write $\mathcal O_{\chi,p}$ for the ring of integers of an extension of $\QQ_p$ containing the values of $\chi$.

As in the introduction, let $\mathcal H_{\Gamma}$ be the distribution algebra of $\Gamma \cong \ZZ_p^{\times}$ (with $L$-coefficients); that is, $\mathcal{H}_{\Gamma}$ is the global section of the rigid analytic space associated with $\mathrm{Spf}\, \mathcal{O}_L[\![\Gamma]\!]$. 
For our further use, let $\hj \colon \Gamma \hookrightarrow \mathcal H_{\Gamma}^{\times}$ be the universal character, which we regard as a character of the Galois group by composition with the cyclotomic character. The Kubota--Leopoldt $p$-adic $L$-function attached to $\chi$, denoted by $L_p(\chi)$, is an element of $\cH_{\Gamma}$ which interpolates special values of the Dirichlet $L$-function of $\chi$. We refer \cite[\S3]{Dasgupta-factorization} for a more detailed treatment of the properties of this $p$-adic $L$-function with our current conventions, which slightly differ from \cite{colmez00}.

\begin{proposition}\label{prop:kl}
There exists a unique element $L_p(\chi) \in \cH_{\Gamma}$ satisfying the following interpolation property: 
\[ L_p(\chi)(j) = \begin{cases}
(1- p^{j-1} \chi(p)^{-1}) \frac{2N^j (j-1)!}{(-2\pi i)^j \tau(\chi)} L(\chi, j) & \text{$j \ge 2$ even},\\
(1 - p^{-j} \chi(p)) L(\chi, j) & \text{$j \le -1$ odd}
\end{cases}, \]
where $\tau(\chi) = \sum_{a \in (\ZZ/N)^\times} \chi(a)\zeta_N^a$ is the Gauss sum. 
\end{proposition}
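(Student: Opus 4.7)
The plan is to establish uniqueness by density and to prove existence via the Mazur measure construction, then check the interpolation on each half of the weight space separately.

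For uniqueness, observe that the characters $\gamma \mapsto \gamma^j$ with $j \le -1$ odd form a Zariski-dense subset of $\mathrm{Spf}\,\cO_L[\![\Gamma]\!]^{\rig}$. Since $\chi$ is even, these are honest critical $L$-values that are not forced to vanish by parity; two elements of $\cH_\Gamma$ agreeing at all such $j$ must therefore coincide.

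For existence, I would start from the Bernoulli distribution $\mu_B$ on $\Zp$, whose moments are $\int x^j\, d\mu_B = -B_{j+1}/(j+1)$, twist it by $\chi$, and apply the Iwasawa regularization (subtracting the Frobenius pushforward to kill the non-unit part) to produce a bounded measure $\mu_\chi$ on $\Zp^\times$. Its Amice transform along $\Zp^\times \twoheadrightarrow \Gamma$ lies in $\cH_\Gamma$ and defines $L_p(\chi)$. Equivalently, one can obtain the same element by applying the Coleman map to the norm-coherent family of cyclotomic units $\{1-\zeta_{Np^n}\}_n$, which is perhaps the more natural input given the Euler-system framework of the paper. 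Evaluating $\int \chi(x)\, x^j\, d\mu_\chi$ at $j \le -1$ odd and using the classical identity $L(\chi, j) = -B_{1-j,\chi}/(1-j)$ reproduces the second case of the statement, with the Euler factor $(1 - \chi(p) p^{-j})$ arising from the restriction to units.

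For $j \ge 2$ even, the interpolation follows from the classical functional equation for $L(\chi, s)$ relating $s \leftrightarrow 1-s$, which introduces exactly the archimedean factor $\tfrac{2N^j(j-1)!}{(-2\pi i)^j \tau(\chi)}$ and converts the local Euler factor at $p$ into $(1 - \chi(p)^{-1} p^{j-1})$. The main obstacle is not conceptual---everything above is classical---but purely a matter of bookkeeping: reconciling the Gauss sum normalization, the overall sign, the factor $(-2\pi i)^{-j}$, and the precise local and global Euler factors with the conventions used throughout the paper, following \cite{Dasgupta-factorization} as signaled just before the proposition.
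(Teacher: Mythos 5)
The paper itself offers no proof of this proposition: it is stated as a recollection of the classical Kubota--Leopoldt construction, with a pointer to \cite[\S3]{Dasgupta-factorization} and a follow-up remark explaining how the two interpolation formulas fit together across the components of $\cO_{\chi,p}[\![\ZZ_p^\times]\!]$. Your existence argument (Mazur/Bernoulli measure with Iwasawa regularization for the $j\le -1$ branch, or equivalently the Coleman map on cyclotomic units, plus the complex functional equation to handle the $j\ge 2$ branch) is the standard route and is consistent with what the paper relies on.

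However, your uniqueness step has a genuine gap. Since $\Gamma\cong\ZZ_p^\times$, the algebra $\cH_\Gamma$ decomposes as a product of $p-1$ pieces indexed by characters of $\mu_{p-1}$, i.e.\ by $t\in\ZZ/(p-1)$, and the evaluation at the character $x\mapsto x^j$ only sees the component $t\equiv j\pmod{p-1}$. Because $p-1$ is even, the characters $x\mapsto x^j$ with $j\le -1$ odd land exclusively in the $(p-1)/2$ components with $t$ odd, and are Zariski-dense only there; they say nothing about the other half of the weight space. So two elements of $\cH_\Gamma$ agreeing at all such $j$ need \emph{not} coincide: they could differ arbitrarily on the even components. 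Uniqueness requires using \emph{both} families of interpolation points, the odd $j\le -1$ for half the components and the even $j\ge 2$ for the other half; together they are dense in every component. (Relatedly, the non-vanishing of the $L$-values you invoke is irrelevant for uniqueness --- what matters is density of the evaluation characters, not of the values.) This component-by-component bookkeeping is precisely the content of the remark that follows the proposition in the paper, and it also clarifies that there is no consistency condition to verify between your two interpolation branches: they define the element on disjoint sets of components.
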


\begin{remark}
We follow a post of David Loeffler in MathOverflow to explain how this interpolation property is consistent with other conventions. We are grateful to him for a nice discussion on the conventions around that point.

The element $L_p(\chi)$ belongs to the Iwasawa algebra $\mathcal O_{\chi,p}[\![\ZZ_p^{\times}]\!]$, which is a product of $(p-1)$ subrings, indexed by $\ZZ/(p-1)\ZZ$, each of them isomorphic to $\mathcal O_{\chi,p}[\![T]\!]$. 

Then, for any $t \in \ZZ/(p-1)\ZZ$ with $(-1)^t = \chi(-1)$, there exists a continuous function $L_{p,t}(\chi) \colon \ZZ_p \rightarrow \mathbf{C}_p$ such that, for all integers $k \geq 0$ with $k \equiv t \pmod{p-1}$, we have $L_{p,t}(\chi,k) = (1-\chi(p)p^{k-1}) \cdot L(\chi,1-k)$. For the subrings corresponding to the sign condition $(-1)^t = -\chi(-1)$, we define the corresponding $L_{p,t}(\chi)$ using the interpolation property along the positive integers, thus having $(p-1)$ different $p$-adic $L$-function which can be packaged in a single element $L_p(\chi)$ of $\mathcal O_{\chi,p}[\![\ZZ_p^{\times}]\!]$.
\end{remark}

With these conventions, the Kubota--Leopoldt $p$-adic $L$-function satisfies a functional equation, which is discussed in \cite[\S3.2]{Dasgupta-factorization}.
However, although the Kubota--Leopoldt $p$-adic $L$-function may be defined along any of the $(p-1)$ discs of the weight space, regardless of any parity constraint, the cohomology classes can only be defined over half of them.

\begin{definition}
Let $\chi$ be an even, primitive, and non-trivial Dirichlet character of conductor $N$. Let $H$ be the subfield of $\QQ(\zeta_N)$ cut out by $\ker(\chi)$, and let
\[ u_{\chi} = \prod_{a=1}^{N-1} \chi(a) \otimes (1-\zeta^a) \in (\mathcal O_L \otimes \mathcal O_H^{\times})^{\chi}. \]
\end{definition}

This kind of elements, often called {\it circular units}, are the essential piece for constructing the easiest instance of Euler systems, that of circular units. Consider the module
\[ V(\chi)^*(-\hj) := \mathcal O_{\chi,p}(\chi^{-1})\otimes \mathcal H_{\Gamma}(-\hj). \]
Here, as usual, we identify $\chi$ with a character of the Galois group. It is characterized by the property that for any integer $j$, we recover $V(\chi)^*(j) = \mathcal O_{\chi,p}(\chi^{-1})(-j)$ by specializing at $j$ (viewed as a ring homomorphism $\mathcal H_{\Gamma} \to \Qp$.

Modifying the previous units as described in \cite{perrinriou94b} (see also \cite[Section 1.1]{BCDDPR}), we obtain the so-called Euler system of circular units, which is an element
\[ c(\chi) \in H^1(\QQ, V(\chi)^*(-\hj)) = \varprojlim_n H^1(\QQ(\mu_{p^n}), V(\chi)^*(1)), \]
whose image in $H^1(\QQ, V(\chi)^*(1))$ is the image of $(1 - \chi(p)) \cdot u_\chi$ under the Kummer map. Note that the Iwasawa cohomology of $V(\chi)^*(1)$ is actually zero over half of the weight space; so we may obtain a slightly more refined statement by writing $\Gamma^+ = \Gamma/\langle \pm 1 \rangle$, and $\hj_+$ for its universal character, and considering $\kappa(\chi)$ as an element of the slightly smaller space $H^1(\QQ, V_p(\chi)^*(1 - \hj_+))$.

\begin{remark}
We warn the reader that our convention for identifying Dirichlet characters with Galois characters is the inverse of that of \cite{BCDDPR}, so our $c(\chi)$ would be $\kappa(\chi^{-1})$ in their notation.
\end{remark}

As a piece of notation for our further use, we write $c(\chi)(r) \in H^1(\QQ, \QQ_p(\tau^{-1})(1+r) \otimes \mathcal H_{\Gamma}(-\hj))$ for the Iwasawa cohomology class obtained from $c(\chi)$ after twisting by the power of the cyclotomic character.

The reciprocity law, as established in \cite{perrinriou94b}, asserts the following. Here, $\operatorname{Col}_{\chi}$ denotes the Coleman map, which interpolates either the Bloch--Kato logarithm or the dual exponential map according to the value of $j$. We refer the reader to any of the aforementioned references for more details.

\begin{proposition}
There is an equality \[ \operatorname{Col}_{\chi}(\loc_p(c(\chi))) = L_p(\chi)\big|_{\Gamma^+}, \] where $\loc_p$ means localization at $p$ and $\operatorname{Col}_{\chi}$ is the Coleman homomorphism for $V_p(\chi)^*(1)$.
\end{proposition}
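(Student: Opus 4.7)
The plan is to deduce this from the classical explicit reciprocity law for the cyclotomic tower, interpreted through the character $\chi$, using that both sides lie in $\mathcal H_{\Gamma^+}$ and match at a Zariski-dense set of integer specializations.

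First, unwind the construction of $c(\chi)$. At each finite level $n$, the modified cyclotomic units of Perrin-Riou's recipe give a norm-compatible system of elements of $(\mathcal O_L \otimes \mathcal O_{H_n}^\times)^\chi$, where $H_n = H(\mu_{p^n})$; under the Kummer map and projection onto the $\chi$-eigenspace, these assemble into the Iwasawa cohomology class $c(\chi) \in H^1(\QQ, V(\chi)^*(-\hj))$. The vanishing of Iwasawa cohomology on the odd part of $\Gamma$ ensures the class automatically factors through $\Gamma^+$, which is why the target of the comparison is $L_p(\chi)|_{\Gamma^+}$.

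Second, recall that the Coleman map $\operatorname{Col}_\chi$ is built by composing $\loc_p$ with Perrin-Riou's big logarithm for the crystalline representation $V_p(\chi)^*(1)$. By its defining interpolation property, specializing $\operatorname{Col}_\chi(\loc_p(c(\chi)))$ at an integer $j$ in the appropriate range of $\Gamma^+$ recovers, up to an explicit local Euler factor involving $\chi(p)$ and a power of $p$, either the Bloch--Kato logarithm or the dual exponential of the corresponding specialization of $c(\chi)$. Therefore the problem reduces to computing these Bloch--Kato invariants of the cyclotomic-unit specializations and matching them against the interpolation formula of Proposition \ref{prop:kl}.

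Third, carry out this computation via the classical formula of Coates--Wiles and Coleman: the Coleman power series associated to the norm-compatible system $(1 - \zeta_{p^n})_n$ has an explicit shape, and the twist by $\chi$ converts the logarithmic-derivative expression into the standard Gauss-sum-weighted sum $\tfrac{1}{\tau(\chi^{-1})}\sum_a \chi^{-1}(a)\, \zeta_N^{a}$ (or an analogue thereof) which interpolates the special values $L(\chi,j)$ for $j \ge 2$ even, and the values $L(\chi,j)$ for $j \le -1$ odd through the dual-exponential range. For $j \ge 2$ even one matches, up to normalization, $\tfrac{2N^j(j-1)!}{(-2\pi i)^j \tau(\chi)}L(\chi,j)$ with the Euler factor $1 - p^{j-1}\chi(p)^{-1}$; for $j \le -1$ odd, one matches $L(\chi,j)$ with the Euler factor $1 - p^{-j}\chi(p)$. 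These integers are Zariski-dense in $\Gamma^+$, so the equality of both sides as elements of $\mathcal H_{\Gamma^+}$ follows.

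The main obstacle is bookkeeping, not mathematics: reconciling conventions for $\hj$ versus $-\hj$, tracking the Gauss sum $\tau(\chi)$ against $\tau(\chi^{-1})$ and the factor $N^j$, and matching the precise Euler factors in the two ranges of $j$ against Proposition \ref{prop:kl}. A clean route around this is to invoke a published version of the reciprocity law (for instance Perrin-Riou's formulation in \cite{perrinriou94b}, or the presentation in \cite{BCDDPR}) and translate it into the normalizations used here; the inverse-character convention noted in the remark just below the definition of $c(\chi)$ is exactly the source of most of the discrepancy and needs to be absorbed once at the outset.
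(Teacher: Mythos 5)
Your proposal is correct and ends up in the same place as the paper: the paper's entire proof is the single sentence that the equality ``directly follows from \cite{perrinriou94b}, adapting her conventions to our setting,'' which is exactly the ``clean route'' you identify in your final paragraph. The preceding sketch (Coleman power series, Coates--Wiles, interpolation at a Zariski-dense set of characters of $\Gamma^+$) is a correct account of what lies underneath that citation, but the paper does not carry out any of it explicitly.
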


\begin{proof}
This directly follows from \cite{perrinriou94b}, adapting her conventions to our setting.
\end{proof}

We later discuss the case of an odd Dirichlet character $\chi$. In this setting, circular units do not exist as described so far, but it is still possible to make sense of an element $c(\chi) \in H^1(\QQ, V_p(\chi)^*(1 - \hj_-))$, which is obtained through the interpolation of $u_{\chi \xi}$, where $\xi$ is an odd character of $p$-power conductor; alternatively, we may modify Proposition \ref{prop:kl} to include this case by considering not just the parity of $j$, but $(-1)^j \chi(-1)$ instead.

\section{Beilinson--Kato elements}\label{sec:bk-elts}

The Kato Euler system (or Beilinson--Kato Euler system) is one of the most significant constructions in the study of $p$-adic methods in Number Theory. It has enabled the proof of new cases of the Bloch--Kato conjecture and provided a general approach to establish one divisibility in the Iwasawa main conjecture. More concretely, the Kato Euler system of \cite{kato04} can be attached to a general Coleman family $\hf$, as discussed e.g.~ by Hansen in \cite{Hansen-Iwasawa} or by Benois and B\"uy\"ukboduk in \cite{BB22} and \cite{BB24}; for the the case of Hida families, the theory is known since the work of Ochiai \cite{ochiai03}. In this section, we begin by recalling the (two-variable) $p$-adic $L$-function of Kitagawa and Mazur, and later discuss the reciprocity law, emphasizing the role of the (one-variable) adjoint $p$-adic $L$-function. This discussion, and in particular the proof of the reciprocity law, does not hold at the critical Eisenstein point, so we later need to carry out certain modifications to adapt it.

\subsection{The Kitagawa--Mazur $p$-adic $L$-function}\label{sec:KM}

Let $f_k$ be the newform associated to the weight $k+2$ specialization of $\hf$, for some $k \geq 0$. Assume that $f_k$ does not correspond to a $p$-stabilization of an Eisenstein series, and let $\mathbf{Q}(f_k)$ the field of coefficients of $f_k$. Then, for each sign $\pm$, the eigenspace in Betti cohomology of $Y_1(N)$ on which the Hecke operators act by the eigensystem of $f_k$ is one-dimensional, and we choose bases $\gamma_f^{\pm}$ of theses spaces. These determine complex periods $\Omega_{f_k}^{\pm} \in \CC^{\times}$.

With the previous notations, let $U = \mathrm{Spa}(A_U, A_U^{\circ}) \subset \mathcal{C}^0$ be the affinoid open over which the family $\hf$ is defined. Then we have an $A_U$-module $V(\hf)^*$, free of rank 2. We choose bases $\gamma^{\pm}_{\hf}$ of the complex-conjugation eigenspaces $V(\hf)^{(c = \pm 1)}$, which are free of rank one over $A_U$. As pointed out in \cite[\S B2.1]{LR1}, we need to extend the definition of $\Omega_{f_k}^{\pm}$ in such a way that they lie in the space $(L \otimes_{\QQ(f_k)} \CC)^{\times}$. 

\begin{definition}
Let \[ L_p(\hf) \in A_U \hat{\otimes}_{\QQ_p} \cH_\Gamma \] be the two-variable Kitagawa--Mazur $p$-adic $L$-function attached to the family $\hf$ and the periods $\gamma_{\hf}^{\pm}$, interpolating the critical $L$-values of all classical, weight $\ge 2$ specializations of $\hf$.
\end{definition}

More precisely, the value at $(k, j)$, with $0 \le j \le k-2$, interpolates the $L$-value $L(f_k, 1 + j)$ up to appropriate factors, where $f$ is the weight $k$ specialization. See e.g. \cite[\S4]{fukayakato12} or \cite{BB22} for the precise form of the interpolation property, which involves different kinds of factors.

This construction was considered in generality by Bella\"iche in \cite{bellaiche11a}, using modular symbols; note that there are cases where the interpolation property does not completely characterize the $p$-adic $L$-function, which is not the case in the ordinary situation.

\subsection{The adjoint $p$-adic $L$-function}

For this section, we mainly follow the works of Hida \cite{hida-adjoint} and Bella\"iche \cite[\S9] {eigenbook}, who constructed an adjoint $p$-adic $L$-function, using Kim's scalar product on overconvergent modular symbols. This was later generalized to a more general setting in the recent works of Balasubramanyam, Bergdall, and Longo \cite{BBL20}, or Lee and Wu \cite{LW-Bianchi}. See also \cite{maksoud23} for a detailed treatment of the topic.

Before establishing the reciprocity law for Beilinson--Kato elements, we need to present another $p$-adic $L$-function attached to the Coleman family $\hf$. For any classical and cuspidal specialization $f$ of $\hf$, we have plus and minus periods $\Omega^{\pm}_{\hf}$. The ratio
\[ L^{\mathrm{alg}}(\operatorname{Ad} f, 1) \coloneqq \frac{-2^{k-1} i \pi^2 \langle f, f \rangle}{\Omega^+_f \Omega^-_f} \]
is in $\QQ(f)^\times$. 

If we choose bases $\gamma_{\hf}^{\pm}$ over the family as above, and use the periods for each $f_k$ determined by these, then a construction due to Hida \cite{hida-adjoint}, and more generally to Bella\"iche \cite{eigenbook}, gives a $p$-adic adjoint $L$-function $L_p(\operatorname{Ad} \hf)$. As a further piece of notation, we say that a family $\hf$ is $p$-distinguished if the semisimplification of the mod-$p$ Galois representation attached to $\hf$, $\bar{\rho}_{\hf}|_{G_{\QQ_p}}$, is the direct sum of two distinct characters.

\begin{proposition}
There exists a one-variable adjoint $p$-adic $L$-function \[ L_p(\operatorname{Ad} \hf) \in A_U \]interpolating the ratios \[ L_p(\operatorname{Ad} \hf)(k) = p^{k-1}\alpha_{k}(p-1) \left(1 - \tfrac{\beta_{k}}{\alpha_{k}}\right)\left(1 - \tfrac{\beta_{k}}{p\alpha_{k}}\right) L^{\mathrm{alg}}(\operatorname{Ad} f_k, 1), \]
for all $k \in U \cap \ZZ_{\ge 0}$ such that the specialization $f_k$ is the $p$-stabilization of a level $N$ cuspidal newform. If $\hf$ is $p$-distinguished, then the congruence ideal of $\hf$ is principal, and this ideal is generated by $L_p(\operatorname{Ad} \hf)$. 
\end{proposition}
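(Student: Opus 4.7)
The plan is to construct $L_p(\operatorname{Ad} \hf)$ as the image of the pair $(\gamma_{\hf}^+, \gamma_{\hf}^-)$ under a Kim-type $A_U$-bilinear pairing on the family of overconvergent modular symbols, and then to check the interpolation property at classical points via Shimura's formula relating the Petersson inner product to the adjoint $L$-value at $s=1$.

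First, I would set up the pairing. Following Bella\"iche \cite[\S9]{eigenbook}, on each distribution module $\mathscr{D}_{\lambda_V}^s$ one has a natural duality that descends, after cup product and Poincar\'e duality on $Y$, to an $A_U$-bilinear pairing
\[
\langle\,\cdot\,,\,\cdot\,\rangle_{\hf} \colon V(\hf)^{(c=+1)} \otimes_{A_U} V(\hf)^{(c=-1)} \longrightarrow A_U,
\]
after projecting via $\operatorname{Pr}_{\hf}$. Since both complex-conjugation eigenspaces are free of rank one with bases $\gamma_{\hf}^{\pm}$, we define
\[
L_p(\operatorname{Ad} \hf) := \langle \gamma_{\hf}^+,\, \gamma_{\hf}^-\rangle_{\hf} \in A_U.
\]
This element depends on the choice of bases, but only up to a unit.

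The second step is the interpolation property. At a classical weight $k$ where $f_k$ arises as the $p$-stabilization of a level-$N$ newform, the comparison of the overconvergent pairing with the Poincar\'e duality on classical symbols reduces $\langle\gamma_{\hf}^+,\gamma_{\hf}^-\rangle_{\hf}(k)$ to a classical inner product of plus/minus symbols attached to $f_k$. Under the Eichler--Shimura isomorphism, the resulting quantity equals $\langle f_k, f_k\rangle/(\Omega_{f_k}^+ \Omega_{f_k}^-)$ up to the local Euler-type factors coming from the fact that we work with the $p$-stabilization and with level $\Gamma_1(N)\cap\Gamma_0(p)$ rather than $\Gamma_1(N)$; a standard computation yields the factors $p^{k-1}\alpha_k(p-1)(1-\beta_k/\alpha_k)(1-\beta_k/p\alpha_k)$. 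Combining with Shimura's classical formula
\[
L(\operatorname{Ad} f_k, 1) \;=\; \frac{-2^{k-1} i \pi^2 \langle f_k, f_k\rangle}{\Omega_{f_k}^+ \Omega_{f_k}^-}\cdot (\text{algebraic unit})^{-1},
\]
rearranged as the definition of $L^{\mathrm{alg}}(\operatorname{Ad} f_k,1)$ given in the statement, produces exactly the displayed interpolation formula. Since such classical weights are Zariski-dense in $U$, this characterizes $L_p(\operatorname{Ad} \hf)$ uniquely (up to the unit from the choice of $\gamma_{\hf}^{\pm}$).

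Finally, for the congruence-ideal assertion, I would invoke the argument of Hida \cite{hida-adjoint} extended by Bella\"iche \cite{eigenbook}: the cokernel of the localized pairing at $\hf$ measures the failure of the eigencurve to be \'etale over the weight space in a direction where $\hf$ meets another family. Under Assumption \ref{ass-1} we already know the eigencurve is \'etale over $U$, so on the cuspidal locus the content of $L_p(\operatorname{Ad} \hf)$ detects congruences with other Coleman families. The $p$-distinguished hypothesis ensures that the residual representation is multiplicity-free when restricted to decomposition at $p$, which guarantees (via the usual Deligne--Serre/lifting argument) that the congruence module is cyclic; hence the congruence ideal is principal and is generated by $L_p(\operatorname{Ad} \hf)$.

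The main obstacle I anticipate is the precise bookkeeping of Euler-type and transition factors at the classical point: namely, matching the pairing coming from the $A_U$-linear duality on $\mathscr{D}_{\lambda_V}^s$ with the classical Poincar\'e pairing through Eichler--Shimura and the $p$-stabilization map, so as to produce the exact factor $p^{k-1}\alpha_k(p-1)(1-\beta_k/\alpha_k)(1-\beta_k/p\alpha_k)$. This is essentially a careful local computation at $p$, but it is where sign and normalization errors most easily occur.
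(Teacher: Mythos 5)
Your proposal is correct and follows essentially the same route as the paper, which simply invokes the construction of Hida and Bella\"iche via Kim's scalar product on overconvergent modular symbols --- precisely the pairing $\langle\gamma_{\hf}^+,\gamma_{\hf}^-\rangle$ you describe, with the interpolation factor arising from the comparison with the classical Petersson/Poincar\'e pairing at $p$-stabilized points and the congruence-ideal statement coming from Hida's theory under the $p$-distinguished hypothesis. The only caveat, which the paper itself flags in the remark following the proposition, is that Bella\"iche's interpolation formula is proved only at crystalline points of trivial nebentype in the Coleman-family setting, so the "standard computation" you defer is not fully in the literature in the generality stated.
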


As pointed out in \cite{maksoud23}, Bella\"iche's interpolation formula is only given at crystalline points of trivial nebentype, while in Thm. 2.3.7 of loc.\,cit. the author gives a much more general result which, however, is only valid in the setting of ordinary Hida families.

The adjoint $p$-adic $L$-function vanishes at the ramification points of the eigencurve. For example, this is the case for the weight one RM theta series. At a critical Eisenstein point, this $p$-adic $L$-function also vanishes. More concretely, we recall the following result from \cite[\S9]{eigenbook}.

\begin{theorem}
If $x \in U$ is a point that either has non-integral weight, or is cuspidal classical, then the adjoint $p$-adic $L$-function has a zero if and only if the weight map is not \'etale at $x$. Otherwise, it has a zero.    
\end{theorem}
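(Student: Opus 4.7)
The plan is to apply Bella\"iche's identification of $L_p(\operatorname{Ad} \hf)$ with a generator of the congruence ideal of the family $\hf$ on the cuspidal eigencurve $\mathcal{C}^0$, as developed in Chapter 9 of \cite{eigenbook}. Under the $p$-distinguished hypothesis, which is in force, the zero locus of $L_p(\operatorname{Ad} \hf)$ in $U$ coincides set-theoretically with the set of points at which $\hf$ is congruent to another system of Hecke eigenvalues appearing in the full Coleman--Mazur eigencurve $\mathcal{C}$. The statement thus reduces to a geometric analysis of how the irreducible component of $\mathcal{C}$ containing $\hf$ meets the other components of $\mathcal{C}$.

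In the first case, when $x$ has non-integral weight or is cuspidal classical, the Eisenstein locus of $\mathcal{C}$ is a disjoint union of isolated classical points in each integral weight, and in particular is disjoint from $\hf$ near $x$. Any congruence with $\hf$ at $x$ therefore comes from another irreducible component of $\mathcal{C}^0$ crossing $\hf$ at $x$, or from self-intersection, i.e., ramification of the weight map along the family. By the standard local description of the finite-slope eigencurve as a flat, generically \'etale cover of the weight space, this occurs precisely when the weight map fails to be \'etale at $x$. This is essentially Hida's original adjoint argument in the ordinary case, transposed to the finite-slope setting via Kim's pairing on overconvergent modular symbols as in \cite{eigenbook}.

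In the remaining case, $x = f_\beta$ is a critical Eisenstein classical point. Here the cuspidal family $\hf$ and the Eisenstein family $\hE(\psi,\tau)$ both pass through $x$ inside the full eigencurve $\mathcal{C}$, regardless of the \'etaleness of $\mathcal{C}^0$ over the weight space guaranteed by Assumption \ref{ass-1}; indeed this is the very meaning of $f_\beta$ being a critical Eisenstein point that lies on a cuspidal family. This forced cuspidal--Eisenstein congruence at $x$ produces a zero of $L_p(\operatorname{Ad} \hf)$ at $x$, independent of any local condition on $\mathcal{C}^0$.

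The main technical obstacle is the second case: converting the intersection of cuspidal and Eisenstein components at $f_\beta$ into an honest vanishing of the interpolated quantity $L_p(\operatorname{Ad} \hf)$. Bella\"iche's approach is to analyze Kim's pairing on overconvergent modular symbols together with the degeneration of the Eichler--Shimura isomorphism at the Eisenstein point. In terms of periods, the algebraic ratio $\langle f_\beta, f_\beta \rangle / (\Omega_{f_\beta}^+ \Omega_{f_\beta}^-)$ fails to be defined in the classical way because $f_\beta$ is not a genuine cuspidal Hecke eigenform, so the interpolated adjoint acquires a zero. This is precisely the step where one uses that $\hf$ is a cuspidal \emph{deformation} of an Eisenstein object, and it is the input that makes the critical Eisenstein case fundamentally different from the cuspidal classical case handled in the first step.
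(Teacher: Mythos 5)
The paper's own ``proof'' is a one-line citation to \cite[\S 9]{eigenbook} and \cite{kim06}, so the real question is whether your reconstruction of Bella\"iche's argument is sound. The first half (non-integral weight or cuspidal classical points) is a reasonable sketch of the congruence-ideal heuristic, although you should be aware that the identification of $L_p(\operatorname{Ad}\hf)$ with a generator of the congruence ideal is stated in the paper only for the \emph{cuspidal} eigencurve and under the $p$-distinguished hypothesis, and that the Eisenstein locus of $\mathcal C$ is a union of one-dimensional components (the ordinary Eisenstein families), not ``isolated classical points''; its intersection with the cuspidal locus is governed by zeros of Kubota--Leopoldt $p$-adic $L$-functions and need not be classical. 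These points would need care in a complete write-up, but they do not break the first case.

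The genuine gap is in your treatment of the critical Eisenstein point. You assert that the cuspidal family $\hf$ and the Eisenstein family $\hE(\psi,\tau)$ ``both pass through $x=f_\beta$ inside the full eigencurve,'' and derive the zero of $L_p(\operatorname{Ad}\hf)$ from this forced crossing. This premise is false: the ordinary Eisenstein family has constant $U_p$-eigenvalue $\psi(p)$ and passes through the \emph{ordinary} stabilization $f_\alpha$, whereas $f_\beta$ has $U_p$-eigenvalue $p^{r+1}\tau(p)$; moreover the critical-slope Eisenstein stabilizations do not interpolate into a rigid-analytic family, since their slope $r+1$ varies with the weight. Indeed, Assumption \ref{ass-1} asserts that the eigencurve is smooth and locally \'etale at $f_\beta$, so exactly one component passes through that point and there is no cuspidal--Eisenstein crossing to produce a congruence. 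The actual mechanism in Bella\"iche's argument is different: Kim's pairing matches compactly supported against non-compactly supported overconvergent modular symbols, and at $f_\beta$ the map $H^1_c\to H^1$ fails to be an isomorphism on the generalized eigenspace (in the paper's notation, $V^c(f_\beta)\to V(f_\beta)$ has one-dimensional image, because the boundary symbols contribute in exactly one sign). The adjoint $p$-adic $L$-function is, up to units, the determinant of this pairing on the rank-one $\pm$-eigenmodules over $A_U$, so it vanishes at $f_\beta$ for this reason, not because of a component intersection. Your closing remark that the interpolation formula ``fails to be defined'' at $f_\beta$ does not substitute for this: the failure of an interpolation property at a single point never by itself forces the interpolated function to vanish there.
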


\begin{proof}
This follows from the results of \cite[\S9]{eigenbook}, based on Kim's arguments \cite{kim06}.
\end{proof}

In particular, the adjoint $p$-adic $L$-function vanishes at the critical Eisenstein points, although the ramification index is $e=1$; this is in contrast with the cuspidal setting, where it could be understood as a measure of ramification. In particular, \cite{BB22} and \cite{BB24} discuss the relationship between the ramification index of the cuspidal eigencurve and the order of vanishing of the adjoint $p$-adic $L$-function.

\subsection{Hida--Rankin vs Kitagawa--Mazur}\label{sec:HR}

The Beilinson--Kato Euler system is associated with the Galois representation of a single modular form or family. However, the reciprocity law we aim to establish does not relate it to the Kitagawa--Mazur $p$-adic $L$-function directly, but rather to the product of two such functions, with dependence on the choice of an auxiliary Dirichlet character. Furthermore, this product is not exactly composed of two Kitagawa--Mazur functions; instead, it corresponds to a Hida--Rankin $p$-adic $L$-function, which agrees with the former up to the choice of periods (represented by the adjoint $L$-function). Specifically, for a modular form $f \in S_k(N, \chi)$, there are two possibilities for the choice of periods: the Petersson product $\langle f, f \rangle$, and the product $\Omega_f^+ \Omega_f^-$. As we have already discussed, the discrepancy between these choices is measured by the $p$-adic $L$-function of the adjoint representation.

The following result establishes how the Hida--Rankin $p$-adic $L$-function $L_p(\hf, \hg)$ attached to two families (either Hida or Coleman families) can be expressed as the product of two Kitagawa--Mazur $p$-adic $L$-functions. In particular, note that the $p$-adic $L$-function $L_p(\hf, \hg)$ is a three-variable $p$-adic $L$-function, depending on the two weights and the cyclotomic variable.

\begin{proposition}\label{prop:factor-hr}
Let $\hg$ be the (ordinary) Eisenstein family of characters $(\chi_1,\chi_2)$. Then, the following factorization formula holds: \[ L_p(\hf,\hg)(k,\ell,s) \times L_p(\operatorname{Ad} \hf)(k) = \mathfrak f(k) \times L_p(\hf,\chi_1)(k,s) \times L_p(\hf,\chi_2)(k,s-\ell+1), \] where $\mathfrak f(k)$ is an explicit factor depending just on $k$.
\end{proposition}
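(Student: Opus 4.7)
The plan is to prove the factorization by verifying it at a Zariski-dense set of classical triples $(k,\ell,s)$ and then invoking the rigidity of $p$-adic analytic functions. Each of the three $p$-adic $L$-functions appearing in the formula is characterized by an interpolation property on the critical range, so it suffices to compare their values at classical points and show that they agree up to a factor depending only on $k$.

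First I would unfold the interpolation formulas. At a cuspidal classical specialization $f_k$ of weight $k+2$, a classical Eisenstein specialization $g_\ell = E_\ell(\chi_1,\chi_2)$ of $\hg$, and an integer $s$ in the critical strip, the Hida--Rankin value $L_p(\hf,\hg)(k,\ell,s)$ interpolates $L(f_k,g_\ell,s)$ divided by the Petersson period $\langle f_k, f_k\rangle$, up to explicit Euler-type fudge factors at $p$. The classical Rankin--Selberg identity for convolution against an Eisenstein series gives
\[
L(f_k, E_\ell(\chi_1,\chi_2), s) \;=\; L(f_k,\chi_1, s)\cdot L(f_k, \chi_2, s-\ell+1),
\]
which is the key non-trivial algebraic input; it is proved by the standard unfolding of the Rankin--Selberg integral against the Eisenstein series, using that the $L$-function of $E_\ell(\chi_1,\chi_2)$ factors as $L(\chi_1,t)L(\chi_2,t-\ell+1)$.

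Next I would handle the period discrepancy using the adjoint $L$-function. By definition,
\[
L^{\mathrm{alg}}(\operatorname{Ad} f_k, 1) \;=\; \frac{-2^{k-1} i \pi^2 \langle f_k, f_k\rangle}{\Omega^+_{f_k} \Omega^-_{f_k}},
\]
so multiplying the Hida--Rankin value by $L_p(\operatorname{Ad}\hf)(k)$ converts the Petersson period into $\Omega^+_{f_k}\Omega^-_{f_k}$, at the cost of the explicit factor $p^{k-1}\alpha_k(p-1)(1-\beta_k/\alpha_k)(1-\beta_k/(p\alpha_k))$ coming from the interpolation property of $L_p(\operatorname{Ad}\hf)$ together with the elementary factor $-2^{k-1}i\pi^2$. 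On the right-hand side the product $L_p(\hf,\chi_1)(k,s)\,L_p(\hf,\chi_2)(k,s-\ell+1)$ interpolates $L(f_k,\chi_1,s)L(f_k,\chi_2,s-\ell+1)$ divided by $\Omega^+_{f_k}\Omega^-_{f_k}$ (after choosing compatible signs according to the parity of $s$ and $s-\ell+1$), again up to explicit Euler factors and powers of $2\pi i$. Comparing the two sides term by term, one observes that the $(\ell, s)$-dependent fudge factors—namely the local Euler factors at $p$ appearing in the Rankin--Selberg convolution versus those in the two Dirichlet twists—cancel against each other, since the local Hecke polynomial at $p$ of $g_\ell$ factors as $(1-\chi_1(p)X)(1-\chi_2(p)p^{\ell-1}X)$ (using ordinarity of $\hg$). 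What remains is a quantity $\mathfrak f(k)$ that packages the constants $-2^{k-1}i\pi^2$, $p^{k-1}\alpha_k(p-1)$, the ratios of Petersson-to-$\pm$ normalizations arising from the archimedean factors in the Rankin--Selberg integral, and any ambiguity in the choice of $\gamma^\pm_{\hf}$.

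Finally, I would conclude by density. The set of classical triples $(k,\ell,s)$ in the common critical range is Zariski-dense in the three-variable analytic space $A_U\mathbin{\hat\otimes} A_U\mathbin{\hat\otimes}\mathcal H_\Gamma$. Both sides of the desired identity are (after clearing the denominator $L_p(\operatorname{Ad}\hf)(k)$, which is a non-zero element of $A_U$) analytic functions of $(k,\ell,s)$ that agree on this dense set, so they agree identically. The main obstacle, and the step requiring genuine care, is the bookkeeping of the Euler factors and the powers of $2\pi i$ in the archimedean Rankin--Selberg integral: one has to verify that all $(\ell,s)$-dependence is indeed absorbed into the comparison of local factors at $p$, so that the residual discrepancy $\mathfrak f(k)$ depends only on $k$; this is exactly the Bertolini--Darmon-style computation in \cite{bertolinidarmon14}, adapted here to the non-ordinary/Coleman setting by working over the affinoid $U$ instead of over a Hida algebra.
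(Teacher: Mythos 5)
Your proposal is correct and follows essentially the same route as the paper: the paper's proof is a direct comparison of critical values via the interpolation properties, citing Bertolini--Darmon's Theorem 3.4 for the Rankin--Selberg factorization against an Eisenstein series and the adaptation to general Coleman families in \cite[\S4.3]{BDV}, with the adjoint factor retained on the left because it may vanish. Your write-up simply fills in the details of that cited computation (classical factorization, period bookkeeping via $L^{\mathrm{alg}}(\operatorname{Ad} f_k,1)$, matching of Euler factors at $p$, and Zariski-density), all consistent with the paper.
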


\begin{proof}
This follows by a direct comparison of the critical values, as in \cite[Thm. 3.4]{bertolinidarmon14}, noting that the Euler factors also match. Observe that in loc.\,cit. the authors restrict to the ordinary case, while e.g. in \cite[\S4.3]{BDV} the proof is adapted to general families.

Observe, however, that in this case we cannot discard the factors contributing to the adjoint, since they can be zero (and indeed, they vanish in the situations we want to study).
\end{proof}

Since the dependence on $\mathfrak f(k)$ is very minor and does not affect any of our results, we can (and we do) renormalize the adjoint $p$-adic $L$-function dividing by the fudge factor $\mathfrak f(k)$, and thus remove it from the statement of Prop. \ref{prop:factor-hr}.

\begin{remark}
We do not actually need the existence of the adjoint $p$-adic $L$-function in this setting. We can just claim an equality of the form  \[ L_p(\hf,\hg)(k,\ell,s) \times \mathfrak g(k)  = L_p(\hf,\chi_1)(k,s) \times L_p(\hf,\chi_2)(k,s-\ell+1), \] where $\mathfrak g(k)$ is a $p$-adic function interpolating the adjoint periods.
\end{remark}

As a piece of notation, we put $\hg = \hE$ to denote the Eisenstein family of characters $(\chi_1,\chi_2)$, provided that the characters are clear from the set-up. In general, in the case of even weights, we will not use the three variables $(k,\ell,s)$, and we often impose the restriction $s = \frac{k}{2} + \ell-1$. Hence, $L_p(\hf,\chi_2)(k,s-\ell+1) = L_p(\hf,\chi_2)(k,k/2)$ does not depend on $s$, and it is just a function on $k$. In this case,
\begin{equation}\label{eq:fact-2-var}
L_p(\hf,\hg)(k,\ell,s) \times L_p(\operatorname{Ad} \hf)(k) = \mathfrak f(k) \times L_p(\hf,\chi_1)(k,s) \times L_p(\hf,\chi_2)(k,k/2).
\end{equation}

We complement the previous result by discussing the Kato elements in the context of triple products and the Artin formalism, examining how the previous result can be interpreted as a factorization formula for a triple of modular forms $(f, g, h)$, where both $g$ and $h$ are Eisenstein. In particular, the choice $s = \frac{k}{2} + \ell - 1$ corresponds to taking $g$ and $h$ of the same weight. 

More precisely, let $f \in S_k$, $g = E_{\ell}(\chi_1,\chi_2)$ and $h = E_m(\xi_1,\xi_2)$. Write $c = \frac{k+\ell+m}{2}-1$ for the center of the $L$-function $L(f \otimes g \otimes h, s)$. Note that, by Artin formalism, $L(f \otimes g \otimes h,s)$ may be written as the product of the four $L$-functions attached to the twist of $f$ by the corresponding Dirichlet character: \[ L(f \otimes \chi_2 \xi_2, c) \cdot L(f \otimes \chi_1 \xi_2, c-\ell+1) \cdot L(f \otimes \chi_2 \xi_1, c-m+1) \cdot L(\chi_1 \xi_1,c-\ell-m+2). \] Proceeding as in Bertolini--Darmon \cite{bertolinidarmon14}, we may write the triple product $p$-adic $L$-function as the product of two Kitagawa--Mazur functions, using the appropriate functional equations \[ L_p(f, \chi_2 \xi_2, c) = L_p(f, \theta^{-1} \chi_2^{-1} \xi_2^{-1},k-c) = L_p(f, \chi_1 \xi_1, c-\ell-m+2) \] and that \[ L_p(f, \chi_1 \xi_2, c-\ell+1) = L_p(f, \theta^{-1} \chi_1^{-1} \xi_2^{-1},c-\ell+1) = L_p(f, \chi_2 \xi_1, c-m+1). \]

Observe indeed that this is consistent with the factorization formula of Proposition \ref{prop:factor-hr} when one takes $\ell = m$.



\subsection{Kato's Euler system}

For this section we follow the foundational work of \cite{kato04}, but using the language and conventions of more modern approaches like \cite{ochiai03}, \cite{bertolinidarmon14} or \cite{BB22}.

Recall that we have chosen bases $\gamma^{\pm}_{\hf}$ of the complex-conjugation eigenspaces $V(\hf)^{(c = \pm 1)}$, which are free of rank one over $A_U$. For the moment, we assume that we are outside the Eisenstein case.

\begin{definition}
Let \[ \kappa(\hf) \in H^1(\QQ, V(\hf)^*(-\hj)) \] denote the canonical \emph{Kato class} attached to the bases $\gamma_{\hf}^{\pm}$, which is the ``$p$-direction'' of an Euler system. 
\end{definition}

This class is constructed by interpolating the cup product of two modular units (or Eisenstein classes when the weight is greater than two), and then projecting to the corresponding isotopic component. The Kato class also depends on an auxiliary Dirichlet character, that we have denoted by $\chi$.

\begin{remark}
The construction of the Kato system depends on the choice of auxiliary integers $(c,d)$, relatively prime to $6Np$, where $N$ is the tame level of the family. However, it is possible to normalize the class in order to avoid that dependence. See e.g. the discussion of \cite[App. B]{BB22}. Observe that this is another point where the theory differs from the Beilinson--Flach case, where the factor cannot be always inverted (there are issues with that in the self-dual case), as it is discussed in \cite{KLZ17}.
\end{remark}

Kato's explicit reciprocity law \cite{kato04} establishes that the image of that class under the Perrin-Riou map recovers the $p$-adic $L$-function. That result requires the following definition, which may be seen as a measure of the contribution of the {\it bad} primes. We follow for this part \cite[\S6.6.3]{BB22}, where they discuss this issue in terms of an {\it imprimitive} $p$-adic $L$-function

\begin{definition}\label{def:bad-Euler}
We write $\mathcal E_N(x)$ for the two-variable $p$-adic $L$-function corresponding to the product of the {\it bad} primes: \[ \mathcal E_N(x) = \prod_{\ell \mid N} (1-a_{\ell}(\hf) \sigma_{\ell}^{-1}) \in A_U \hat{\otimes}_{\QQ_p} \cH_\Gamma. \]
\end{definition}

Since we are assuming that the prime $p$ does not divide $N$, this element must be understood as an analytic two-variable function in the weight and cyclotomic variables, which measures the difference between the motivic $p$-adic $L$-function and the analytic $p$-adic $L$-function (these factors are present in the Euler system, and may be inverted under some additional and mild assumptions).

More precisely, let $\cF^+ V(\hf)$ denote the rank 1 unramified subrepresentation of $V(\hf)$ over $A_U$ (which exists since $\hf$ is ordinary); and let $\eta_{\hf} \in \Dcris(\cF^+ V(\hf))$ be the canonical vector constructed in \cite{KLZ17}, which is characterized by interpolating the classes $\eta_f$ of \cref{sec:dRcoh} for each classical specialization $f$ of $\hf$. Then we have the following reciprocity law, where we have an extra contribution coming from the Euler factors at {\it bad} primes. For the following proposition, write $\mathcal{L}_{\cF^- V(\hf)^*}^{\PR}$ for the Perrin-Riou logarithm attached to $\cF^- V(\hf)^*$.

We state the reciprocity law in the framework of \cite{BB22}, where we avoid the critical Eisenstein points.

\begin{proposition}\label{prop:rec-law-normal}
Let $\hf$ be a Coleman family such that all its specializations are cuspidal (and which, in particular, does not pass through the critical $p$-stabilization of an Eisenstein series). Then, the following explicit reciprocity law holds:
\[ \left\langle \mathcal{L}_{\cF^- V(\hf)^*}^{\PR}( \loc_p \kappa(\hf)), \eta_{\hf} \right\rangle \cdot L_p(\operatorname{Ad} \hf) = \mathcal E_N(x) \cdot L_p(\hf,\chi) \cdot L_p(\hf). \]
\end{proposition}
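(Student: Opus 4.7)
The plan is to deduce the reciprocity law in families from Kato's classical explicit reciprocity law by interpolating simultaneously in the weight variable of $\hf$ and in the cyclotomic variable, and then to apply the factorization of the Hida--Rankin $p$-adic $L$-function to recast it in the form stated. The construction of $\kappa(\hf)$ interpolates, at each classical cuspidal weight $k+2$ specialization $f_k$ of $\hf$, the Beilinson--Kato element $\kappa(f_k)$ built (up to the projection on the $\hf$-isotypic component) as the cup product of two Eisenstein classes; at those points, the classical reciprocity law of Kato \cite{kato04} identifies the image under the dual exponential (or the Bloch--Kato logarithm, depending on the range of the cyclotomic variable) of $\loc_p \kappa(f_k)$, paired against $\eta_{f_k}$, with a Rankin--Selberg $L$-value $L(f_k \otimes E(\chi,\mathbf{1}),s)$ up to an imprimitive Euler factor at the primes dividing $N$ and a period ratio $\langle f_k, f_k \rangle / (\Omega^+_{f_k}\Omega^-_{f_k})$.

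Concretely, I would proceed in four steps. First, recall Kato's pointwise reciprocity law, writing the right-hand side as an imprimitive Rankin--Selberg $L$-value divided by the Petersson norm of $f_k$; the imprimitive Euler factor is exactly what specializes to $\mathcal E_N$. Second, combine the interpolation properties of the Kato class $\kappa(\hf)$ (as in \cite{ochiai03}, \cite{Hansen-Iwasawa}, \cite{BB22}), the Perrin-Riou logarithm $\mathcal L^{\PR}_{\cF^-V(\hf)^*}$, and the canonical class $\eta_{\hf}$ constructed in \cite{KLZ17}, to see both sides of the desired identity as elements of $A_U \hat\otimes_{\QQ_p} \cH_\Gamma$ whose specializations at the (Zariski-dense) set of classical cuspidal crystalline points agree. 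Third, apply Proposition \ref{prop:factor-hr} with $\hg = \hE(\chi, \mathbf{1})$ to factor the Hida--Rankin $p$-adic $L$-function $L_p(\hf,\hE(\chi,\mathbf{1}))$, obtained in the previous step as the analytic incarnation of the Rankin--Selberg $L$-values, as $\mathfrak f(k)^{-1} \cdot L_p(\hf,\chi)\cdot L_p(\hf) \cdot L_p(\operatorname{Ad} \hf)^{-1}$; after absorbing the harmless factor $\mathfrak f(k)$ into the normalization of $L_p(\operatorname{Ad} \hf)$ (as was already done after Proposition \ref{prop:factor-hr}), this yields the stated equality. Fourth, use density and the absence of $A_U$-torsion in the cohomology (guaranteed by the non-critical, cuspidal running assumption) to promote the equality from a Zariski-dense set to an equality in $A_U \hat\otimes_{\QQ_p} \cH_\Gamma$.

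The hardest point will be the consistent bookkeeping of the choices of periods at every stage: the complex periods $\Omega^{\pm}_{f_k}$ that define the Kitagawa--Mazur $p$-adic $L$-function must match (up to the chosen bases $\gamma^{\pm}_{\hf}$ of $V(\hf)^{(c=\pm 1)}$) the Betti periods coming from the cup-product construction of Kato's elements, while the differential $\eta_{\hf}$ pairs these against the Petersson inner product implicit in Rankin--Selberg. These discrepancies are precisely what is measured by the adjoint $p$-adic $L$-function on the left-hand side, and it is at this comparison where one must also be careful to track the auxiliary integers $(c,d)$ used in the definition of $\kappa(\hf)$ (which can be removed here, as recalled in the remark preceding Definition \ref{def:bad-Euler}) and the modification of Euler factors at primes dividing the tame level $N$, which accounts for $\mathcal E_N$.
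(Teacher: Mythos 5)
Your proposal is correct and follows essentially the same route as the paper, whose own proof is simply a citation of Kato's explicit reciprocity law as extended to Coleman families in \cite{BB22}; your four-step sketch (pointwise Kato law, interpolation of $\kappa(\hf)$, $\eta_{\hf}$ and the Perrin-Riou map, factorization of the Hida--Rankin $p$-adic $L$-function via Proposition \ref{prop:factor-hr}, density of classical cuspidal points) is exactly the standard argument underlying that citation. The only nitpick is that the factorization gives $L_p(\hf,\hE) = \mathfrak f(k)\, L_p(\hf,\chi)\, L_p(\hf)\, L_p(\operatorname{Ad}\hf)^{-1}$ rather than with $\mathfrak f(k)^{-1}$, but since the paper renormalizes away the fudge factor this is immaterial.
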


\begin{proof}
This is Kato's explicit reciprocity law in the framework of Coleman families, as developed e.g. in \cite{BB22}. Observe that the presence of the adjoint $p$-adic $L$-function, and also the auxiliary Kitagawa--Mazur $p$-adic $L$-function $L_p(\hf,\chi)$ is usually treated as a unique contribution in terms of a non-vanishing factor, but we want to emphasize this dependence here since we will later need to be more circumspect around this point.
\end{proof}

\begin{remark}
There are some crucial differences between the theory of Beilinson--Flach and Beilinson--Kato classes. In particular, the latter depends on the choice of a suitable class in homology; the results of Ash and Stevens guarantee that this can be done in such a way that the Kato class is non-zero. Observe, however, that in the arguments of \cite{BB22} it is crucial that the critical point is cuspidal.
\end{remark}

Proposition \ref{prop:rec-law-normal} must be understood as an equality in two variables, with $L_p(\hf,\chi)$ playing an auxiliary role. In many case, it is possible to invert the factor $L_p(\hf,\chi)$ and normalize the Kato class dividing by it; however, in the Eisenstein case one must be more circumspect and we cannot do this normalization.

\begin{remark}
In the framework of triple products, when we had two ordinary Eisenstein series, one can formally the classes lie in \[ H^1(\QQ, V(f)^*(\chi_1^{-1} \xi_1^{-1})(1-(c-\ell-m+2))). \] Its image under the Perrin-Riou map is not just $L_p(f,\chi_1 \xi_1,c-\ell-m+2)$, as expected, since there is also the contribution coming from $L_p(f,\chi_1 \xi_2, c-\ell+m+1)$. If we further impose impose the condition $\chi_1 \xi_1 = 1$, and letting $t = (k+\ell-m)/2$, we have a class in $H^1(\QQ, V(f)^*(1-s))$ whose image is $L_p(f,\chi,t) \times L_p(f,s)$, suitably normalized by the adjoint.

When we specialize at the critical Eisenstein series, either $L_p(f, \chi, t)$ or $L_p(f, s)$ must vanish, but these two zeros are very different in nature, since the honest $p$-adic $L$-function for the representation is $L_p(f, s)$. In this part of the article, we consider the situation where $L_p(f, \chi, t)$ is zero.  
\end{remark}

In the setting where $\hf$ is not a critical Coleman family, the classes are usually renormalized to avoid dependence on the character $\chi$. More precisely, we may consider the normalized class
\[
\widetilde{\kappa(\hf)} = \frac{L_p(\operatorname{Ad} \hf)}{L_p(\hf, \chi)} \times \kappa(\hf).
\]

This class satisfies a reciprocity law that directly relates it to the $p$ -adic $L$ -function $L_p(\hf)$, thus avoiding the inconvenient appearance of the adjoint $p$-adic $L$-function and the auxiliary Kitagawa--Mazur $p$-adic $L$-function. However, as we later discuss, we must exercise more caution in the Eisenstein setting, since both $p$-adic $L$-functions vanish at the critical point.

\section{Deformation of Beilinson--Kato elements}\label{sec:def}

The aim of this section is to study the two--variable Beilinson--Kato elements in the case where they pass through a critical Eisenstein family. In particular, we want to relate the natural projection of that cohomology class with the circular units discussed in Section \ref{sec:circular}. Since the aim of this section is to establish Theorem \ref{theorem-main}, we assume that $r$ is even, although many of the results also hold in the odd case.

In particular, as we have already emphasized, the results of Benois and B\"uy\"ukboduk do not hold in our case, and we need to adapt both the interpolation of the Beilinson--Kato class and the explicit reciprocity law.

\subsection{Set-up for the degeneration process}

Consider the Beilinson--Kato class attached to two modular forms. As before, let $f = E_{r+2}(\psi,\tau)$ stand for the Eisenstein series of weight $r+2$ and characters $(\psi,\tau)$, and let $f_{\beta}$ be its critical slope $p$-stabilization. Under the non-criticality conditions we have fixed, there is a unique Coleman family $\hf$ passing through $f_{\beta}$, defined over some affinoid disc $U \ni r$. We may suppose that for all integers $k \in U \cap \ZZ_{\ge 0}$ with $k \ne r$, the specialization $f_k$ is a non-critical slope cusp form.

We also consider the modules \[ V(\hf)^*(-\hj) := V(\hf)^* \hat \otimes_{\QQ_p} \mathcal H_{\Gamma}(-\hj) \] and, in a similar way, the compactly supported version \[ V^c(\hf)^*(-\hj) := V^c(\hf)^* \hat \otimes_{\QQ_p} \mathcal H_{\Gamma}(-\hj). \]

\subsection{Families over punctured discs}

In this section, we discuss how we can make sense of a Beilinson--Kato cohomology class $\kappa(\hf)$ attached to the Coleman family $\hf$. The next result establishes that we have such a family of cohomology classes, maybe with a simple pole at the Eisenstein point.

\begin{theorem}\label{thm:bk-families}
There exists a cohomology class \[ \kappa(\hf) \in H^1(\QQ, \frac{1}{X} V^c(\hf)^*(-\hj)) \] with the following interpolation property: if $k \geq 0$ is an integer with $k \ne r$, then we have \[ \kappa(\hf)(k) = \mathcal{BK}^{[f_k]} \in H^1(\QQ, V(f_k)^* \otimes \mathcal H_{\Gamma}(-\hj)), \] where the element $\mathcal{BK}^{[f_k]} = \mathcal{BK}^{[f_k]}_{1, 1}$ is as defined e.g. in Theorem 9.2.1 of \cite{KLZ17}.
\end{theorem}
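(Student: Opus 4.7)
The plan is to construct $\kappa(\hf)$ by applying the Hecke projection $\operatorname{Pr}_{\hf}$ to a universal Beilinson--Kato cohomology class living in the family cohomology of the distribution sheaf $\mathscr{D}_{\lambda_V}^s$, and then to analyze the pole introduced by this projection at the critical point $X = 0$.

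First, I would construct a universal source $\mathcal K \in H^1(\QQ, H^1_{\et,\mathrm c}(\bar Y, \mathscr{D}_{\lambda_V}^s) \otimes \cH_\Gamma(-\hj))$ by interpolating across $U$ the Siegel-unit / Eisenstein symbol recipe of \cite{KLZ17}, in the same style as \cite{ochiai03}, \cite{Hansen-Iwasawa} and \cite{BB22}: one produces motivic Beilinson--Kato classes on $Y$ from Siegel units, takes their \'etale realisations with coefficients in $\mathscr{D}_{\lambda_V}^s$, and verifies Galois-invariance together with compatibility with classical specialization at each integer weight in $U$. This step is essentially formal; the Coleman-family setting (as opposed to the Hida-family setting) forces one to work with affinoid opens and overconvergent distributions, but no new ingredient is required beyond \cite{KLZ17} and \cite{BB22}.

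Next, I would apply $\operatorname{Pr}_{\hf}$. Away from $X = 0$, this projection lands in the $\hf$-isotypic compactly supported cohomology, and its specialisation at each non-critical classical $k \ne r$ recovers $\mathcal{BK}^{[f_k]}_{1,1}$ by compatibility of the Siegel-unit construction with \'etale specialization. The delicate issue is at $X = 0$: by the maximal reducibility ideal statement \cite[Thm.~1]{bellaichechenevier06} together with the structure of the chain \eqref{chain}, the image of $V^c(\hf)^*$ in $V(\hf)^*$ has cokernel annihilated by exactly $X$ (so that the integer ``$r$'' of \S\ref{subsec:representations} equals $1$ in our setting), and consequently $V(\hf)^* \subset \tfrac{1}{X} V^c(\hf)^*$. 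Therefore the projected class, which a priori lies only in $V(\hf)^*(-\hj)$ after Hecke projection, automatically lies in $\tfrac{1}{X} V^c(\hf)^*(-\hj)$, and this is the natural home of $\kappa(\hf)$.

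The interpolation property at $k \ne r$ is then immediate: at such a classical weight the map $V^c(f_k)^* \to V(f_k)^*$ is an isomorphism (because $f_k$ is cuspidal and non-critical), so the specialisation of $\operatorname{Pr}_{\hf}(\mathcal K)$ is already integral in $V^c(f_k)^*$ and agrees with $\mathcal{BK}^{[f_k]}_{1,1}$ by construction of the latter. The main obstacle I foresee is precisely the control of the pole at $X = 0$: rigorously bounding the denominator by $X$ rather than by a higher power requires the full force of Assumption \ref{ass-1} (smoothness and \'etaleness of the eigencurve at $f_\beta$) together with the Bella\"iche--Chenevier maximal reducibility bound, and is the only genuinely new input relative to the cuspidal story treated in \cite{BB22}. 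Everything else is a direct extension of Kato's construction to the Coleman family setting.
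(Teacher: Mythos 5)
Your overall strategy (interpolate the Kato construction over $U$, apply the Hecke projector, then control the denominator at the critical point) is the same as the paper's, which runs the argument of \cite[Thm.~5.4.2]{loefflerzerbes16} verbatim away from $k=r$ and then invokes the analysis of the comparison maps in \cite[\S A]{LR1} to show that only a simple pole is introduced at the critical Eisenstein point. But your mechanism for bounding the pole contains a genuine error. You assert that the projected class ``a priori lies in $V(\hf)^*(-\hj)$'' and then deduce membership in $\tfrac{1}{X}V^c(\hf)^*(-\hj)$ from the trivial containment $V(\hf)^* \subset \tfrac{1}{X}V^c(\hf)^*$ in the chain \eqref{chain}. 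This is inconsistent with your own setup — you built the universal class $\mathcal{K}$ in \emph{compactly supported} cohomology, whose Hecke projection is governed by $V^c(\hf)^*$, not $V(\hf)^*$ — and, more seriously, it proves too much. If $\kappa(\hf)$ genuinely lay in $H^1(\QQ, V(\hf)^*(-\hj))$, then its image in the cohomology of the quotient $\tfrac{1}{X}V^c(\hf)^*/V(\hf)^*$ would vanish identically; that image is precisely the class $\kappa(f_\beta)$ of Theorem \ref{theorem-main}, and the paper only expects such a lift to $V(\hf)^*$ (with the consequent vanishing of $\kappa(f_\beta)$ and passage to the secondary class $\kappa(f_\beta)_2$) in the odd-parity case treated in Section \ref{sec:other-cases}. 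So your argument, if it worked, would trivialize the main construction rather than establish it.

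The correct content of the theorem is that the interpolated class, which is naturally measured against the lattice $V^c(\hf)^*$ and is well defined after inverting $X$, has a pole of order at most one there — not that it lies in the intermediate lattice $V(\hf)^*$. The bound on the pole does not come from the containments in the chain alone; it comes from tracking how the comparison maps between overconvergent and classical cohomology (equivalently, the projectors onto the $\hf$-eigenspaces in $H^1_{\et,c}$ versus $H^1_{\et}$) degenerate at $X=0$, which is the analysis carried out in \cite[\S A]{LR1}. Your observation that the successive quotients in the chain are one-dimensional (i.e.\ that the integer ``$r$'' of \S\ref{subsec:representations} equals $1$) is a necessary ingredient for that analysis, and your remark that the results of \cite{BB22} need replacing because its key proposition requires all specializations to be cuspidal is on target; but the step placing the class inside $V(\hf)^*$ must be removed and replaced by the degeneration analysis of the projectors.
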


\begin{proof}
To begin with, observe that the results of \cite{BB22} do not apply in this setting, since, in particular, the key Prop. 6.5 of loc.\,cit. requires all the specializations being cuspidal, as it occurs in the Beilinson--Flach case in \cite{loefflerzerbes16}.

The result follows verbatim the reasoning of \cite[Thm. 5.4.2]{loefflerzerbes16}, which establishes an analogous result when all integer-weight specializations of $\hf$ are classical. In this case, the result does not apply for $k = r$, but inverting $X$ resolves the issue. In particular, the analysis of the comparison maps in cohomology developed in \cite[\S A]{LR1} makes clear that we are only introducing a simple pole at the critical Eisenstein point.
\end{proof}

The result asserts that we can directly interpolate the Beilinson--Kato classes in a neighborhood of the critical Eisenstein series, where the points are all cuspidal, and that the class can only present a simple pole at $f_{\beta}$. Next, we consider the projection of $\kappa(\hf)$ in cohomology.

\begin{definition}
We write $\kappa(f_{\beta})$ for the image of $\kappa(\hf)$ in the cohomology of the quotient \[ \frac{\frac{1}{X}V^c(\hf)^*}{V(\hf)^*} \cong \QQ_p(\tau^{-1})(1+r) \otimes \mathcal H_{\Gamma}(-\hj). \]
\end{definition}  

Note that this class is not zero a priori, as it happened in the scenarios discussed in \cite{LR1}, where the key point was that the class satisfied certain {\it good} local properties.

\subsection{The $p$-adic $L$-function and a factorization formula}

We being by recalling the following definition regarding the logarithmic distribution.

\begin{definition}
For a continuous character $\sigma : \ZZ_p^{\times} \rightarrow \CC_p$, the function $\frac{d^k \sigma}{dz^k} \cdot \frac{z^k}{\sigma(z)}$ is a constant, and $\log^{[k]} \in \CC_p$ is defined to be this constant. 
\end{definition}
An equivalent definition for $\log_p^{[k]}$  is $\log_p^{[k]}(\sigma) = w(\sigma)(w(\sigma)-1) \cdots (w(\sigma)-k+1)$, where $w(\sigma) = \frac{\log_p(\sigma(1+p))}{\log_p(1+p)}$. The analytic function $\log_p^{[k]}$ vanishes to order 1 at the characters $\sigma$ of the form $z \mapsto z^j \chi(z)$, where $\chi$ is a finite order Dirichlet character and $j$ is an integer such that $0 \leq j \leq k-1$.

\begin{remark}
Following the analogy with the Beilinson--Flach case, we would expect the class $\kappa(f_{\beta})$ to be divisible by the logarithmic distribution $\log^{[r+1]} \in \mathcal H_{\Gamma}$. Let us sketch why the same argument that we had used in that case does not work here. The specialization of $\kappa(\hf)$ at a locally-algebraic character of $\Gamma$ of degree $j \in \{0, \dots, r\}$ factors through the image of $\mathscr{D}_{\lambda_V - j}^s \otimes \operatorname{TSym}^j$ in $\mathscr{D}_{\lambda_V - (r+1)}^s \otimes \operatorname{TSym}^{(r+1)}$, and the maps $\Pr_{\hf}^{[j]}$ and $\Pr_{\hf}^{[r+1]}$ agree on this image up to a non-zero scalar. Since the $\Pr_{\hf}^{[j]}$ for $0 \le j \le r$ do not have poles at $X = 0$, it follows that the specializations of $\kappa(\hf)$ at a pair $(r, \chi)$, for $\ell \ge r$ and $\chi$ locally-algebraic of degree $\in \{0, \dots, r\}$, interpolate the projections of the classical Beilinson--Kato classes to the $(E_{r+2}^{\crit})$-eigenspaces in classical cohomology. However, in this setting, we cannot ensure whether or not these classes arise as suitable projections of classes in the cohomology of $X_1(N)$. 
\end{remark}

Consider now the two-variable $p$-adic $L$-function $L_p(\hf) \in A_V \hat{\otimes}_{\QQ_p}\cH_{\Gamma}$ as introduced in Section \ref{sec:KM}. For this construction, the interpolation property applies also at $k = r$ without any further modifications; and here the complex $L$-function factors, via Artin formalism, as
\[
L(E_{r+ 2}(\psi, \tau), 1 + j) = L(\psi, 1+j) \cdot L(\tau, j - r).
\]

Moreover, the restriction of $L_p(\hf)$ to the $k = r$ fiber is uniquely determined by its interpolation property at crystalline points (we do not need finite-order twists), and after Bella\"iche--Dasgupta \cite{bellaichedasgupta15} we also have an ``Artin formalism'' factorization along the region where $\psi(-1)$ agrees with the parity of $j$. 

\begin{definition}
Let $\lambda$ denote the function defined over the Iwasawa algebra such that its specialization at a character $\sigma$ corresponding to an integer $s$ and a Dirichlet character $\xi$ of $p$-power conductor is $\xi^{-1}(N_2) \langle N_2 \rangle^{-s+1}$.
\end{definition}

As usual, we write $f_{\beta} = E_{r+2}^{\crit}(\psi,\tau)$ for the critical $p$-stabilization of $E_{r+2}(\psi,\tau)$. Note that $L_p(f_{\beta},s)$ may be seen as the (one-variable) $p$-adic $L$-function corresponding to the specialization of the (two-variable) function $L_p(\hf)$.

\begin{theorem}\label{thm:bd}
 With the previous notations, we have the following:
 \begin{enumerate}
     \item[(a)] $L_p(f_{\beta},s) = 0$, if $(-1)^s = \psi(-1)$.
     \item[(b)] Up to an explicit rational factor, \[ L_p(f_{\beta},s) = \lambda(s) \log_p^{[r+1]}(s) L_p(\psi,s) L_p(\tau,s-r-1), \] if $(-1)^s = -\psi(-1)$.
 \end{enumerate}
 \end{theorem}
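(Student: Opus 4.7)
My plan is to combine interpolation of $L_p(\hf)$ at classical twisted integer points with Artin formalism for Dirichlet $L$-factors (for part (a)), and then to reduce part (b) to the main theorem of \cite{bellaichedasgupta15} after a normalization comparison. The two halves of the statement have quite different flavors: (a) is an essentially formal vanishing, while (b) really uses the explicit overconvergent-symbols computation of Bella\"iche--Dasgupta.

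For part (a), the Kitagawa--Mazur function $L_p(\hf)$ is characterized by its interpolation property at classical pairs $(k,j)$ with $k$ a classical weight and $j \in \{0,\dots,k-2\}$, possibly twisted by a finite-order character $\chi$ of $p$-power conductor. Specializing at the Eisenstein weight $k = r$ and applying Artin formalism gives (up to explicit Euler factors and period ratios) $L(f_\beta \otimes \chi^{-1},\,1+j) = L(\psi\chi^{-1},\,1+j)\cdot L(\tau\chi^{-1},\,j-r)$. For $j \in \{0,1,\dots,r\}$ the second factor is evaluated at a non-positive integer, and the classical formula $L(\eta,-n) = -B_{n+1,\eta}/(n+1)$ vanishes precisely when $\eta(-1) = -(-1)^{n+1}$; translating this through our parity conventions (and using $\psi\tau(-1)=1$) one obtains vanishing exactly for pairs $(j,\chi)$ whose product character falls in the $(-1)^s = \psi(-1)$ half of the cyclotomic disc. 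Since such twisted integer points are Zariski-dense in that half, $L_p(f_\beta,s) \equiv 0$ there.

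For part (b), both sides are analytic functions on the complementary half of the disc, so it suffices to match them on a Zariski-dense subset. The key subtlety is that integer points $s \in \{0,\dots,r\}$ give $0 = 0$ on both sides: the right-hand side vanishes via $\log^{[r+1]}(s)$, and the left-hand side vanishes because the critical-slope Euler factor $(1 - \beta_{\hf}/p^{1+s})$ appearing in the Kitagawa--Mazur interpolation acquires a trivial zero at $\beta = p^{r+1}\tau(p)$. Hence these integer points give only consistency, not a determination. Instead, one invokes Bella\"iche's construction of $L_p(f_\beta)$ as the Mellin transform of the unique (under Assumption~\ref{ass-1}) overconvergent eigensymbol attached to $f_\beta$, together with the explicit computation of that symbol performed in \cite{bellaichedasgupta15} in terms of the Eisenstein Fourier coefficients. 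That computation exhibits the Mellin transform as a product of Kubota--Leopoldt measures, giving exactly the right-hand side; the function $\lambda(s)$ encodes the interaction between the tame conductor $N_2$ of $\tau$ and the level $N = N_1 N_2$ of $\hf$ under $U_p$ (via the twist by $\langle N_2 \rangle$ on the nebentype), and $\log^{[r+1]}$ records the order-$(r+1)$ vanishing inherent to the critical-slope eigensymbol.

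The main obstacle is the explicit overconvergent-symbol computation, which is precisely the content of \cite{bellaichedasgupta15}; the present argument therefore reduces to verifying that our period conventions ($\Omega_f^{\pm}$ rather than $\langle f,f\rangle$) and our normalization of $L_p(\hf)$ match those of loc.\,cit., so that the ``explicit rational factor'' claimed in the statement absorbs any remaining discrepancy between the two constructions of the $p$-adic $L$-function.
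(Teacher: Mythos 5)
Your treatment of part (b) is essentially the paper's: both reduce the identity to the explicit overconvergent-eigensymbol computation of \cite{bellaichedasgupta15}, and your remarks about reconciling the period and normalization conventions are the right things to check (the paper's proof is likewise a citation to \S\S 5--6 of loc.\,cit.). The problem is part (a), where you replace Bella\"iche--Dasgupta's argument by a formal interpolation argument, and that step does not work.

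The gap is the inference ``the interpolated values vanish on a Zariski-dense set of twisted integer points, hence $L_p(f_\beta,s)\equiv 0$ on that half of the disc.'' For a critical-slope form of weight $r+2$ the slope equals $r+1$, so the attached distribution on $\ZZ_p^\times$ only satisfies the admissibility bound of order $r+1$, while the critical range supplies the moments $\int \chi(z)z^j$ only for $j\in\{0,\dots,r\}$. This is exactly the borderline case in which Amice--V\'elu/Vishik uniqueness fails: a distribution of order $r+1$ is \emph{not} determined by those moments, and the paper says so explicitly (``there are cases where the interpolation property does not completely characterize the $p$-adic $L$-function''). Concretely, $\log^{[r+1]}$ is a nonzero element of $\cH_{\Gamma}$ of order $r+1$ whose specializations at every $z\mapsto z^j\chi(z)$ with $0\le j\le r$ vanish, so vanishing at all interpolation points is compatible with a nonzero $p$-adic $L$-function. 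Your own discussion of part (b) concedes precisely this (``these integer points give only consistency, not a determination''), which is in tension with the logic you use for part (a). The actual proof of (a), which the paper indicates, goes through Bella\"iche's operator $\Theta_k$: the critical eigensymbol is the image under $\Theta^{r+1}$ of an ordinary symbol of weight $-2-r$ and consequently lies in a single $\pm$-eigenspace for the complex-conjugation involution, and it is this sign constraint --- not the vanishing of the classical Dirichlet $L$-values --- that forces the Mellin transform to vanish identically on the discs with $(-1)^s=\psi(-1)$.
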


\begin{proof}
See \cite[\S5, 6]{bellaichedasgupta15}. More concretely, the proof of the first statement relies on the application $\Theta_k$, defined in \cite{bellaiche11b}, which allows us to relate the critical weight $k$ Eisenstein series with the ordinary Eisenstein series of weight $-2 - k$.
\end{proof}

Furthermore, the factor $L_p(\tau, s - k + 1)$ is the image under the Coleman map of the system of circular units, thus providing a natural link with the other Euler system explored in this note.

However, as we have already emphasized, the $p$-adic $L$-function that appears in the reciprocity law is not this one, but rather the Hida--Rankin $p$-adic $L$-function, which outside the critical line factors as the product of the two Kitagawa--Mazur $p$-adic $L$-functions $L_p(\hf)$ and $L_p(\hf, \chi, s)$, suitably normalized by the adjoint $p$-adic $L$-function.

The product of the two Kitagawa--Mazur $p$-adic $L$-functions always vanishes at the critical Eisenstein point, since one of them lies in the region of the weight space where Theorem \ref{thm:bd} guarantees that the $p$-adic $L$-function is zero. It is important to note that this does not imply that the image of the Beilinson--Kato class is zero, as this zero is canceled by the adjoint $p$-adic $L$-function. Hence, by combining Proposition \ref{prop:factor-hr} with Theorem \ref{thm:bd}, we obtain the following corollary. As a piece of notation, we write $L_p'(f_{\beta},\chi)$ for the derivative of the one-variable $p$-adic $L$-function $L_p(\hf,\chi)(k,k/2)$ at the specialization $f_{\beta}$; similarly, $L_p'(\operatorname{Ad} f_{\beta})$ denotes the derivative of $L_p'(\operatorname{Ad} \hf)$ at $f_{\beta}$. For the following result, recall that we are working under the assumption that $r$ is even.

\begin{corollary}\label{prop:factor-full}
Let $\hE$ be the (ordinary) Eisenstein family of characters $(1,\chi)$, and set $s = \frac{r}{2}+\ell$. Then, the following factorization formula holds when $(-1)^{r/2} (-1)^{\ell} = \psi(-1)$: \[ L_p(f_{\beta},\hE)(\ell,s) \times L_p'(\operatorname{Ad} f_{\beta}) = \mathfrak f(r) \cdot \log_p^{[r+1]}(s) \cdot L_p'(f_{\beta},\chi) \times L_p(\psi,s) \times L_p(\tau,s-r-1). \]
\end{corollary}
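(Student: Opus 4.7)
The strategy is to combine Proposition \ref{prop:factor-hr} with Bellaïche--Dasgupta's Theorem \ref{thm:bd}. Applying the factorization formula of Proposition \ref{prop:factor-hr} with $(\chi_1, \chi_2) = (1, \chi)$ and $\hg = \hE$ gives the three-variable identity
\[
L_p(\hf, \hE)(k, \ell, s) \cdot L_p(\operatorname{Ad} \hf)(k) = \mathfrak{f}(k) \cdot L_p(\hf)(k, s) \cdot L_p(\hf, \chi)(k, s - \ell + 1),
\]
as an identity of $p$-adic analytic functions over the affinoid parametrising $\hf$. Following the conventions of equation (\ref{eq:fact-2-var}), one restricts to the slice $s = k/2 + \ell - 1$ so that the second Kitagawa--Mazur factor becomes $L_p(\hf, \chi)(k, k/2)$, a one-variable function of $k$.

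The plan is then to specialize the weight variable to the critical Eisenstein point $k = r$ and to track the vanishing behaviour along the weight direction. By Assumption \ref{ass-2}, the adjoint $L_p(\operatorname{Ad} \hf)$ has a simple zero at $k = r$, so that $L_p(\operatorname{Ad} \hf)(k) = (k - r) L_p'(\operatorname{Ad} f_\beta) + O((k - r)^2)$. Under the present sign assumptions ($r$ even, $\psi$ and $\tau$ even, $\chi$ odd), the Kitagawa--Mazur factor $L_p(\hf, \chi)(k, k/2)$ also vanishes at $k = r$ (this is the parity-induced vanishing at $(f_\beta, r/2+1)$ recorded in the introduction), with leading term $(k - r) L_p'(f_\beta, \chi)$. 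The third factor, $L_p(\hf)(k, s)$ evaluated at $(r, r/2 + \ell)$, equals $L_p(f_\beta, s)$, which vanishes by Theorem \ref{thm:bd}(a) since $(-1)^s = \psi(-1)$ under our hypothesis. Dividing the identity by the appropriate power of $(k - r)$ and passing to the limit $k \to r$ produces the derivatives $L_p'(\operatorname{Ad} f_\beta)$ and $L_p'(f_\beta, \chi)$ on their respective sides.

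The final ingredient is Theorem \ref{thm:bd}(b), which furnishes the factorization
\[
L_p(f_\beta, s) = \lambda(s) \log_p^{[r+1]}(s) L_p(\psi, s) L_p(\tau, s - r - 1)
\]
up to an explicit rational factor. Although this formula is stated on the complementary sign half $(-1)^s = -\psi(-1)$, the right-hand side extends analytically in $s$: the observed vanishing in our regime $(-1)^s = \psi(-1)$ is precisely encoded by the zeros of $\log_p^{[r+1]}(s)$ at the integer arguments of interest. Substituting this expression for $L_p(\hf)(k, s)$ on the right-hand side of the divided identity, and absorbing $\lambda(s)$ together with other rational constants into $\mathfrak{f}(r)$, yields the asserted formula.

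The main difficulty is the careful bookkeeping of vanishing orders: three factors on the right-hand side simultaneously vanish at $k = r$ (the adjoint contribution is only on the left, while the two Kitagawa--Mazur factors sit on the right together with $L_p(f_\beta)(s)$), so one must verify that matching the orders of vanishing across the identity forces an auxiliary vanishing of $L_p(\hf, \hE)$ at $k = r$, and that after division by the correct power of $(k - r)$ no spurious higher-order derivative terms contaminate the formula. A secondary subtlety is the legitimacy of invoking the Bellaïche--Dasgupta expression outside its canonical sign half, which requires interpreting the factorization as an analytic identity in $s$ rather than a pointwise one.
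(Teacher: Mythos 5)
Your overall strategy is the paper's: differentiate the Hida--Rankin factorization of Proposition \ref{prop:factor-hr} along the weight direction at $k=r$, kill the cross terms using the simple zeros guaranteed by Assumption \ref{ass-2}, and then substitute the Bella\"iche--Dasgupta formula of Theorem \ref{thm:bd}. However, your execution contains a genuine error in the sign bookkeeping, and the device you introduce to repair it does not work. In the paper's intended regime only \emph{two} factors vanish at $k=r$: the adjoint $L_p(\operatorname{Ad}\hf)$ on the left and the auxiliary Kitagawa--Mazur factor $L_p(\hf,\chi)(k,k/2)$ on the right (the latter because $\chi$ is odd, which places its central values in the vanishing half for the twisted form). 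The remaining factor $L_p(f_{\beta},s)=L_p(\hf)(r,s)$ is then evaluated in the half of $\cH_{\Gamma}$ where Theorem \ref{thm:bd}(b) applies, i.e.\ $(-1)^s=-\psi(-1)$, and a single derivative in $k$ yields the identity directly, with no order-matching issues and no auxiliary vanishing of $L_p(\hf,\hE)$ to verify. (The apparent conflict with the literal sign condition in the statement comes from the shift between $s=\tfrac r2+\ell$ there and $s=\tfrac k2+\ell-1$ in \eqref{eq:fact-2-var}, which flips the parity of $s$ for fixed $\ell$; the hypothesis must be read so that Theorem \ref{thm:bd}(b) is the applicable case.)

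You instead place $L_p(f_{\beta},s)$ in the case (a) region, where it vanishes identically, so that after dividing by one power of $(k-r)$ your right-hand side is zero; you then try to recover the product formula by asserting that the case (b) expression "extends analytically in $s$" and that the case (a) vanishing "is precisely encoded by the zeros of $\log_p^{[r+1]}(s)$". This step is false. The two parity halves are distinct connected components of the cyclotomic weight space (the $(p-1)$ discs indexed by $\ZZ/(p-1)$), so there is no analytic continuation from one half to the other; and $\log_p^{[r+1]}$ vanishes only at the finitely many characters $z\mapsto z^j\xi(z)$ with $0\le j\le r$, so it cannot account for an identical vanishing on entire components. As written, your argument would conclude $L_p(f_{\beta},\hE)(\ell,s)\cdot L_p'(\operatorname{Ad} f_{\beta})=0$ rather than the stated formula. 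The fix is simply to put $s$ in the correct parity region, after which the three-way vanishing you worry about in your final paragraph disappears and the paper's one-line derivative computation goes through.
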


\begin{proof}
This follows by taking derivatives along the weight direction in Proposition \ref{prop:factor-hr}, and using that both $L_p'(\operatorname{Ad} \hf)$ and $L_p(\hf,\chi)$ vanish at the critical Eisenstein point. Then, specializing at $k=r$ and applying Theorem \ref{thm:bd}, the result follows.
\end{proof}

The previous result gives a precise description of $L_p(f_{\beta},\hE)$ as the product of a derivative of a Kitagawa--Mazur $p$-adic $L$-function and two Kubota--Leopoldt $p$-adic $L$-functions, normalized by the adjoint $p$-adic $L$-function. Compare e.g. with \cite[Thm. 3.3]{RiRo}, where a similar result was derived in the setting of congruences between cuspidal forms and Eisenstein series.

\subsection{Perrin-Riou maps}\label{sec:PR}

Once we have introduced the different $p$-adic $L$-functions and the Euler system, we aim to relate $L_p(\hf)$ to the image of the localization of the Beilinson--Kato class, $\operatorname{loc}_p(\kappa(\hf))$, under the projection to $\cF^- V(\hf)^*$.

At this point, and for establishing an explicit reciprocity law, the theory presents notable differences with that of Beilinson--Flach elements. In particular, the Beilinson--Kato element depends on the choice of a homology period. By a result of Ash--Stevens \cite[Thm. 13.6]{kato04} in the cuspidal case, there exists a choice of periods which guarantees that the class is non-zero. Further, the cohomology class obtained for a different choice of periods gives another class that, for any specialization, is a multiple of the one coming from Ash--Stevens. Unfortunately, in the Eisenstein case, we cannot claim that such a good normalization exists, so it is possible that all the possible choices give the zero class. Then, our results of this section must be interpreted as follows: if there exists a choice of period for which $\kappa(\hf)$ is not zero, we prove a reciprocity law for that class, which later allows us to relate it with a circular unit; if not, we just get the zero class.

\begin{remark}
This discussion suggests, as a very natural question, if we can prove that there exists a normalization of the Kato class such that $\kappa(\hf)$ is non-zero.
\end{remark}

Perrin-Riou's regulator gives us a map
\[
  \operatorname{Col}_{\mathbf{a}_{\hf}^-} = \left\langle \mathcal{L}^{\PR}_{\cF^- V(\hf)^*}(-), \mathbf{a}_{\hf}^- \right\rangle :
  H^1_{\mathrm{Iw}}(\QQ_{p, \infty}, \cF^- D(\hf)^*) \to \cH_{\Gamma}\hat{\otimes}_{\QQ_p}A_U
\]
which interpolates the Perrin-Riou regulators for $f_k$ for varying $k$. Let $\varphi^{-1}$ stand for the left inverse of the Frobenius, denoted as $\psi$ in \cite[\S8.2]{KLZ17}. More precisely, for $z \in \left(\cF^- D(\hf)^*\right)^{\varphi^{-1} = 1}$, this map sends $z$ to
\[ \langle\iota((1 - \varphi) z), \mathbf{a}_{\hf}^- \rangle, \]
 where $\iota$ is the inclusion
\[
\left(\cF^- D(\hf)^*\right)^{\varphi^{-1} = 0} \into \left(\cF^- D(\hf)^*[1/t]\right)^{\varphi^{-1} = 0}
= \Dcris\left(\cF^{-}D(\hf)^*\right) \otimes \cH_{\Gamma}.
\]

At the bad fiber, we have the relation
\[ \mathbf{a}^-_{\hf} \bmod X = d_r t^{r+1} \omega_{f_r} \otimes e_{-(r + 1)} \]
where $e_n$ is the standard basis of $\Zp(n)$ and $d_r$ is a constant. Since multiplication by $t^{r+1}$ corresponds to multiplication by $\log^{[r + 1]}$ on the $\cH_{\Gamma}$ side (see e.g. \cite{leiloefflerzerbes11}), we conclude that
\[ \operatorname{Col}_{\mathbf{a}_{\hf}^-}(\kappa(\hf)) \bmod X = d_r \cdot \log^{[r + 1]} \cdot  \left\langle \mathcal{L}^{\text{PR}}_{\tau^{-1}} (\kappa(f_{\beta})), \omega_{f_k} \right\rangle.
\]

We now establish the explicit reciprocity law for a family $\hf$ through a critical Eisenstein point. Note that this is not automatic from our previous result, which allows us to claim it in a neighborhood of the critical Eisenstein point, thus leading to the present of some annoying constants.

In order to state the next result, we recover the setting of Section \ref{sec:HR}. In particular, we are working with two-variables $(k,s)$, which may be understood in terms of the variables $(k,\ell)$ corresponding to the weight specializations of the Coleman family $\hf$ and the ordinary Eisenstein family $\hE$, where, with the notations of Section \ref{sec:HR}, $(\chi_1,\chi_2)=(1,\chi)$. In order to simplify the arguments in the next result, we assume that $L_p'(f_{\beta},\chi) \neq 0$, although a minor modification of the leading term argument can solve the issue.

\begin{theorem}\label{theorem:rec-law-critical}
Under Assumption \ref{ass-1} and Assumption \ref{ass-2}, we have
\[ \operatorname{Col}_{\mathbf{a}_{\hf}^-}(\kappa(\hf)) \cdot L_p(\operatorname{Ad} \hf) = d_{\hf}(\hk) \cdot \mathcal E_N \cdot L_p(\hf,\chi,\hk/2) \cdot L_p(\hf), \]
where $d_{\hf}(\hk)$ is either the zero function, or, if not, it is a meromorphic function on $U$ alone, regular and non-vanishing at all integer weights $k \ge -1$ except possibly at $r$ itself, and regular at $k = r$. Recall that $\mathcal E_N$ is the product of the bad Euler factors introduced in Definition \ref{def:bad-Euler}.
\end{theorem}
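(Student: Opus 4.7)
\medskip

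\textbf{The plan} is to prove the identity on the punctured disk $U \setminus \{r\}$ via the cuspidal reciprocity law of \cref{prop:rec-law-normal}, then extract the scalar $d_{\hf}(\hk)$ as an interpolation of period ratios, and finally perform a leading-term analysis at $k=r$ to establish regularity there. The setup is that on $U\setminus\{r\}$ every classical specialization of $\hf$ is a non-critical cuspidal eigenform, so Kato's machinery and the arguments of \cite{BB22} apply fiberwise, while at $k=r$ we must exploit the ``flip'' in the specialization of $\mathbf{a}_{\hf}^-$ recorded in \S\ref{sec:dRcoh}.

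\textbf{First, on the punctured disk.} For each classical cuspidal integer weight $k\in U\cap\ZZ_{\ge 0}$, $k\ne r$, \cref{prop:rec-law-normal} gives the identity
\[ \langle \mathcal{L}^{\PR}_{\cF^- V(f_k)^*}(\loc_p\kappa(f_k)),\eta_{f_k}\rangle\cdot L_p(\operatorname{Ad} f_k) = \mathcal{E}_N(k)\cdot L_p(f_k,\chi,k/2)\cdot L_p(f_k). \]
By \cref{thm:bk-families}, $\kappa(\hf)$ specializes at $k$ to $\kappa(f_k)$ (with no pole off $X=0$), and by the discussion of \S\ref{sec:dRcoh} the period $\mathbf{a}_{\hf}^-$ specializes to $d_k\cdot \eta_{f_k}$ for some nonzero scalar $d_k\in L^{\times}$. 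Feeding these into the Perrin-Riou pairing gives
\[ \operatorname{Col}_{\mathbf{a}_{\hf}^-}(\kappa(\hf))(k)\cdot L_p(\operatorname{Ad}\hf)(k) = d_k\cdot \mathcal{E}_N(k)\cdot L_p(\hf,\chi,\hk/2)(k)\cdot L_p(\hf)(k) \]
at every such $k$.

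\textbf{Second, interpolate the scalars $d_k$.} The ratios $d_k$ depend only on the weight and not on the cyclotomic variable, since on each fiber they are computed as the ratio of two bases of the one-dimensional space $\Dcris(\cF^-D(f_k)^*)$. To turn them into a meromorphic function $d_{\hf}(\hk)$ on $U$, I would argue that both $\mathbf{a}_{\hf}^-$ and the family $\{\eta_{f_k}\}_{k\ne r}$ arise as global generators of a rank-one coherent sheaf on $U\setminus\{r\}$ (for $\mathbf{a}_{\hf}^-$ this is the definition; for $\{\eta_{f_k}\}$ it follows from the interpolation theory of canonical differentials in Coleman families, as developed in \cite[\S10]{KLZ17} and in Ohta's work). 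Hence their ratio is an analytic, nowhere-vanishing function on the punctured disk. Zariski density of the cuspidal classical weights and analyticity of both sides then upgrade the pointwise identity to the identity of meromorphic functions on $U\mathbin{\hat\otimes}\cH_{\Gamma}$ asserted in the theorem.

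\textbf{Finally, the behavior at $k=r$}, which is the main obstacle. The function $d_{\hf}$ is not a priori regular there, because at $k=r$ the period $\mathbf{a}_{\hf}^-$ no longer specializes to a multiple of $\eta_{f_\beta}$ but instead to $d_{f_\beta}\omega_{f_\beta}$, reflecting the shift in filtration degree observed in \S\ref{sec:dRcoh}. The way out is a leading-term analysis: the left-hand side is regular at $k=r$ (the at-worst-simple-pole of $\kappa(\hf)$ in $X$, from \cref{thm:bk-families}, is cancelled by the simple zero of $L_p(\operatorname{Ad}\hf)$ guaranteed by \cref{ass-2}), while on the right the factor $L_p(\hf,\chi,\hk/2)$ has a simple zero at $k=r$ by \cref{thm:bd} together with \cref{ass-2}, and $\mathcal{E}_N\cdot L_p(\hf)$ is regular. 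Matching orders of vanishing along $k=r$ in the equation that defines $d_{\hf}$ forces $d_{\hf}$ to be regular at $k=r$; if the left-hand side vanishes identically along $k=r$ (which would happen if every choice of homology period makes $\kappa(\hf)$ trivial, as discussed in \S\ref{sec:PR}), then $d_{\hf}$ is the zero function. Away from $k=r$ the function $d_{\hf}$ is non-vanishing because $\mathbf{a}_{\hf}^-(k)$ and $\eta_{f_k}$ are both nonzero generators of a one-dimensional space, which completes the verification of the claimed analytic properties.
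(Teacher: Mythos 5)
Your overall architecture -- establish the identity fibrewise at cuspidal classical weights via \cref{prop:rec-law-normal}, interpolate the period ratios $d_k$ into a meromorphic function of $\hk$ alone, then control the behaviour at $k=r$ -- is the same as the paper's. The first two steps are fine (the paper phrases the interpolation by directly declaring $d_{\hf}$ to be the quotient $\operatorname{Col}_{\mathbf{a}_{\hf}^-}(\kappa(\hf))/(\mathcal E_N \cdot L_p(\hf,\hE))$ and invoking the Rankin--Selberg reciprocity laws plus \cite[\S7]{BB22} for the non-vanishing at classical weights, rather than your sheaf-generator argument for $\{\eta_{f_k}\}$, but these amount to the same thing).

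The gap is in your final step. Your order count at $k=r$ does not yield regularity of $d_{\hf}$, only that it has at most a simple pole. Indeed, you allow $\operatorname{Col}_{\mathbf{a}_{\hf}^-}(\kappa(\hf))$ to have a simple pole which is cancelled by the simple zero of $L_p(\operatorname{Ad}\hf)$, so all you know is $\ord_{k=r}(\mathrm{LHS}) \ge 0$; on the right, $L_p(\hf,\chi,\hk/2)$ has a simple zero (by Theorem \ref{thm:bd} and Assumption \ref{ass-2}) while $\mathcal E_N \cdot L_p(\hf)$ is regular and not identically zero along $\{r\}\times\cW$, so $\ord_{k=r}(\mathrm{RHS}) = \ord_{k=r}(d_{\hf}) + 1$. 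Comparing gives only $\ord_{k=r}(d_{\hf}) \ge -1$. The paper avoids this by never splitting the two simple zeros across the two sides: it divides through by the Hida--Rankin function, writing $d_{\hf} = \operatorname{Col}_{\mathbf{a}_{\hf}^-}(\kappa(\hf))/(\mathcal E_N \cdot L_p(\hf,\hE))$, and observes via Corollary \ref{prop:factor-full} that $L_p(\hf,\hE)$ is well defined and \emph{non-vanishing} along $\{r\}\times\cW$ precisely because the simple zero of $L_p(\operatorname{Ad}\hf)$ cancels the simple zero of $L_p(\hf,\chi,\hk/2)$ in the factorization of Proposition \ref{prop:factor-hr} -- this is exactly what Assumption \ref{ass-2} buys. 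Regularity of $d_{\hf}$ at $k=r$ then reduces to regularity of $\operatorname{Col}_{\mathbf{a}_{\hf}^-}(\kappa(\hf))$ itself (not merely of its product with the adjoint), which is the statement you would need to supply to close your version of the argument. A smaller point: the theorem's dichotomy is between $d_{\hf}$ being identically zero on $U$ and being non-vanishing at classical weights; your phrasing conflates ``vanishes identically along $k=r$'' with ``is the zero function'', which are different conditions.
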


\begin{proof}
It follows from the reciprocity laws for Rankin--Selberg convolutions (of cuspidal forms) that the quotient $ \operatorname{Col}_{\mathbf{a}_{\hf}^-}(\kappa(\hf))/ (\mathcal E_N \cdot L_p(\hf,\hE))$ is a function of $\hk$ alone, and this ratio is either identically zero or, following the discussion of \cite[\S7]{BB22}, it does not vanish at any integer $k \ge -1$ where $f_k$ is classical; it is equal to the fudge-factor $d_k$ defined above.

Moreover, since under the current assumptions $L_p(\hf,\hE)$ is well defined and non-zero along $\{ r \} \times \cW$, we conclude that $d_{\hf}(\hk)$ does not have a pole at $\hk$ (although it might have a zero there).
\end{proof}

\begin{remark}
The non-vanishing of $L_p'(f_{\beta},\chi)$ must be understood as a very mild assumption, since the character $\chi$ can be chosen among all the possible Dirichlet characters satisfying a parity constraint. Further, if $L_p'(f_{\beta},\chi)=0$, the function $d_{\hf}(\hk)$ can have a (not necessarily simple) pole; in that case, our results can be adapted considering the leading term of $d_{\hf}(\hk)$ in its Laurent expansion.
\end{remark}

Note that the right hand side of the previous theorem may be understood as a Hida--Rankin $p$-adic $L$-function, which factors as the function $L_p(\hf)$ involved in our study times an extra factor that we may usually neglect for a fixed specialization of $f$. In particular, the previous theorem may be refined following Corollary \ref{prop:factor-full}, taking derivatives and then applying the factorization formula for $L_p(\hf)$.

\begin{remark}
Observe that this result is only valid in half of the weight space. If we assume for simplicity that both $r$ and $\psi$ are even, the reciprocity law encodes in the $p$-adic $L$-function the values of classes in the Iwasawa cohomology of $\QQ_p(\tau^{-1})(1-j)$ when $j$ is even. This is coherent with the fact that, in this case, Iwasawa cohomology has rank zero for odd values of $j$ (that is, the cohomology class and the $p$-adic $L$-function vanish in those cases).

\end{remark}

\subsection{Meromorphic Eichler--Shimura}

Before comparing the specialization of the Beilinson--Kato class with the system of circular units, we need to understand the variation of the canonical differential $\eta_{\hf}$ around the critical Eisenstein point. This is achieved in the following result.

\begin{proposition}
Under the running assumptions, there exists an integer $n \geq 0$ and a unique isomorphism of $A_U$-modules \[ \eta_{\hf} \colon \Dcris(\mathcal F^-D(\hf)^*) \cong X^{-n}  A_U \] whose specialization at every $k \geq 1 \in U$, with $k \neq r$, is the linear functional given by the pairing with the differential form $\eta_{f_k}$ associated to the weight $k$ specialization of $\hf$. For this $\eta_{\hf}$, we have \[ \left\langle \mathcal{L}^{\mathrm{PR}}_{\cF^-}(-), \eta_{\hf}^- \right\rangle \cdot L_p(\operatorname{Ad} \hf) = \mathcal E_N \cdot L_p(\hf,\chi,\hk/2) \cdot L_p(\hf). \]
\end{proposition}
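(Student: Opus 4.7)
The plan is to construct $\eta_{\hf}$ by renormalizing the non-canonical trivialization $a_{\hf}^-$ from Section \ref{sec:dRcoh} using the ``fudge factor'' function $d_{\hf}(\hk) \in A_U$ appearing in Theorem \ref{theorem:rec-law-critical}. Explicitly, one sets
\[
\eta_{\hf} \;:=\; d_{\hf}(\hk)^{-1}\cdot a_{\hf}^-.
\]
Recall from Section \ref{sec:dRcoh} that $a_{\hf}^-$ specializes at each non-critical classical weight $k\neq r$ to $d_{f_k}\,\eta_{f_k}$, while $d_{\hf}(\hk)$ was characterized in the proof of Theorem \ref{theorem:rec-law-critical} as the meromorphic function on $U$ interpolating the constants $d_{f_k}$; by that theorem it is regular and non-vanishing at every integer $k\ge -1$ with $k\neq r$. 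Hence the specialization of $\eta_{\hf}$ at such $k$ is exactly $\eta_{f_k}$. Since $d_{\hf}(\hk)$ is regular at $\hk=r$ but may vanish there to some finite order $n\ge 0$, the renormalized $\eta_{\hf}$ takes values in $X^{-n}A_U$ and defines an isomorphism of $A_U$-modules onto that target.

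For uniqueness, any two such isomorphisms would differ by a unit of $A_U$ specializing to $1$ at every integer $k\ge 1$ in $U$ with $k\neq r$; since $U$ is an affinoid disc and these integers are Zariski-dense in $U$, the two must coincide.

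To deduce the reciprocity law, I substitute the definition of $\eta_{\hf}$ into Theorem \ref{theorem:rec-law-critical}. The $A_U$-bilinearity of the pairing gives
\[
\operatorname{Col}_{a_{\hf}^-}(-) \;=\; \bigl\langle \mathcal L^{\PR}_{\cF^- V(\hf)^*}(-),\,a_{\hf}^-\bigr\rangle \;=\; d_{\hf}(\hk)\cdot \bigl\langle \mathcal L^{\PR}_{\cF^- V(\hf)^*}(-),\,\eta_{\hf}\bigr\rangle,
\]
so the conclusion of Theorem \ref{theorem:rec-law-critical} can be rewritten as
\[
d_{\hf}(\hk)\cdot \bigl\langle \mathcal L^{\PR}_{\cF^-}(\kappa(\hf)),\,\eta_{\hf}\bigr\rangle \cdot L_p(\operatorname{Ad}\hf) \;=\; d_{\hf}(\hk)\cdot \mathcal E_N\cdot L_p(\hf,\chi,\hk/2)\cdot L_p(\hf).
\]
Cancelling the common factor $d_{\hf}(\hk)$, which is non-zero as a meromorphic function under Assumption \ref{ass-2}, yields the displayed reciprocity law.

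The principal obstacle is the dichotomy inside Theorem \ref{theorem:rec-law-critical}: one needs $d_{\hf}(\hk)\not\equiv 0$ both for the renormalization to make sense and for the cancellation above to be legitimate, which is precisely where Assumption \ref{ass-2} --- in particular the non-vanishing of $L_p'(f_{\beta},\chi)$ --- enters. A more subtle secondary point, already hinted at by the ``meromorphic Eichler--Shimura'' phenomenon and the analogous remark about $b_{\hf}^+$, is the justification that $a_{\hf}^-$ interpolates $\eta$ at generic weights but degenerates to (a multiple of) $\omega_{f_{\beta}}$ at $X=0$; this mismatch is what forces the $X^{-n}$ pole and is the source of all of the subtlety in the critical Eisenstein setting.
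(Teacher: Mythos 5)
Your construction $\eta_{\hf} = d_{\hf}(\hk)^{-1}\cdot a_{\hf}^-$, with $n$ the order of vanishing of $d_{\hf}$ at $k=r$, is exactly the paper's (one-line) proof, and your added verifications of the interpolation property, uniqueness, and the cancellation of $d_{\hf}(\hk)$ in the reciprocity law are correct and consistent with the surrounding discussion. You also correctly identify that the whole argument hinges on $d_{\hf}\not\equiv 0$, which is precisely the dichotomy the paper flags via Assumption \ref{ass-2} and the leading-term discussion.
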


\begin{proof}
We define $\eta_{\hf}$ to be the quotient $a_{\hf}^-/d_{\hf}$, and $n$ is the order of vanishing of $d_{\hf}$ at $k=r$.
\end{proof}

If $d_{\hf}(r) \ne 0$, then we have thus constructed a class in the Iwasawa cohomology of $V(\hf \times \tau)^*$ whose regulator agrees with the product of the circular unit Euler system for $\tau$, and a shifted copy of the Kubota--Leopoldt $p$-adic $L$-function for $\psi$.

We claim that if $d_{\hf}(r) = 0$, then in fact $\kappa(\hf)$ is divisible by $X$. If $d(r) = 0$, then $\kappa(f_{\beta})$ vanishes. So we can divide out a factor of $X$ from both $\kappa(\hf)$ and $d_{\hf}(\hk)$, and repeat the argument. Since $d_{\hf}$ is not identically 0, this must terminate after finitely many steps. Thus we have the following version of the {\it leading term argument}, following the ideas introduced in \cite{LZ-yoshida}.

\begin{proposition}\label{prop:leading-term}
Let $n \ge 0$ be the order of vanishing of $d_{\hf}$ at $k = r$ and suppose that Assumption \ref{ass-1} and \ref{ass-2} hold. Then $X^{-n} \kappa(\hf)$ is well defined and non-zero modulo $X$; and this leading term projects into the quotient $H^1(\QQ, \QQ_p(\tau^{-1}) \otimes \cH_{\Gamma}(-\hj))$. Its image under the Perrin-Riou regulator is given by a multiple of \[ \left( \frac{d^{*}_{\hf}(r) \cdot \mathcal E_N \cdot L_p'(f_{\beta},\chi)}{L_p'(\operatorname{Ad} f_{\beta})} \right) \cdot L_p(E_{r + 2}(\psi, \tau),\hj+1), \] where $d^{*}_{\hf}(r) \in L^\times$.
\end{proposition}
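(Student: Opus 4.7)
The plan is to reduce the statement to three inputs already available in the paper: the explicit reciprocity law of Theorem \ref{theorem:rec-law-critical}, the factorization of Corollary \ref{prop:factor-full}, and the Bloch--Kato / Soul\'e computation invoked as step (4) of the introduction. The heart of the argument is the implication $d_{\hf}(r) = 0 \Rightarrow \kappa(f_\beta) = 0$; parts (1) and (2) of the statement then follow from the iterative cancellation sketched immediately before the proposition, and part (3) follows by running the same reciprocity-law computation once more for the leading term.

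First I would establish the key implication. By Assumption \ref{ass-2}, $L_p(\operatorname{Ad}\hf)$ vanishes to order exactly one at $X=0$ with leading coefficient $L_p'(\operatorname{Ad} f_\beta)\neq 0$; by Corollary \ref{prop:factor-full}, under the running parity hypothesis $L_p(\hf,\chi,\hk/2)$ also vanishes to first order with leading coefficient $L_p'(f_\beta,\chi)$. Dividing both sides of the reciprocity law of Theorem \ref{theorem:rec-law-critical} by $X$ and specializing at $X=0$, I obtain
\[
\operatorname{Col}_{\mathbf{a}_{\hf}^-}(\kappa(\hf))\big|_{X=0}\cdot L_p'(\operatorname{Ad} f_\beta)
= d_{\hf}(r)\cdot \mathcal E_N\big|_{k=r}\cdot L_p'(f_\beta,\chi)\cdot L_p(f_\beta).
\]
By the identity $\operatorname{Col}_{\mathbf{a}_{\hf}^-}(\kappa(\hf))\bmod X = d_r\cdot\log^{[r+1]}\cdot\langle \mathcal L^{\PR}_{\tau^{-1}}(\kappa(f_\beta)),\omega_{f_r}\rangle$ recorded in Section \ref{sec:PR}, the left-hand side is an explicit nonzero scalar times the Perrin-Riou regulator image of $\kappa(f_\beta)$. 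So if $d_{\hf}(r)=0$, that regulator image vanishes identically in $\cH_\Gamma$.

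To upgrade regulator vanishing to vanishing of $\kappa(f_\beta)$ itself, I would invoke the Soul\'e / Bloch--Kato input: on the half of weight space selected by the parity of $\tau$, the Iwasawa cohomology $H^1(\QQ,\QQ_p(\tau^{-1})(1+r)\otimes \cH_\Gamma(-\hj))$ is free of rank one over $\cH_\Gamma$, generated by the circular-unit class $c(\tau)(r)$ of Section \ref{sec:circular}. The Perrin-Riou regulator of this generator is, up to an explicit twist, the Kubota--Leopoldt $p$-adic $L$-function $L_p(\tau)$, which is not identically zero; hence the regulator is injective on this one-dimensional module and $\kappa(f_\beta)=0$. (The complementary half of weight space has vanishing Iwasawa cohomology, so the conclusion there is automatic.) Iterating this implication, and noting that $d_{\hf}$ is not identically zero so that the process terminates after exactly $n$ steps, I deduce parts (1) and (2): the class $X^{-n}\kappa(\hf)$ is well defined, its specialization $\kappa^*(f_\beta):=(X^{-n}\kappa(\hf))\bmod X$ is nonzero, and it lies in the quotient $H^1(\QQ,\QQ_p(\tau^{-1})\otimes \cH_\Gamma(-\hj))$ as asserted.

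For part (3), I would apply the reciprocity law to $X^{-n}\kappa(\hf)$; the right-hand side's factor $d_{\hf}(\hk)$ is replaced by $X^{-n}d_{\hf}(\hk)$, whose value at $k=r$ is by definition $d^*_{\hf}(r)\in L^\times$. Running the same $X\to 0$ limit as in the previous paragraph and substituting the Bella\"iche--Dasgupta factorization $L_p(f_\beta,s)\propto L_p(E_{r+2}(\psi,\tau),s)$ of Theorem \ref{thm:bd}, valid in the parity range where $L_p(f_\beta,s)$ does not vanish identically, the regulator image of $\kappa^*(f_\beta)$ becomes
\[
\left(\frac{d^*_{\hf}(r)\cdot\mathcal E_N\cdot L_p'(f_\beta,\chi)}{L_p'(\operatorname{Ad} f_\beta)}\right)\cdot L_p(E_{r+2}(\psi,\tau),\hj+1),
\]
up to a single global multiple absorbing the scalars $c_{f_\beta}, d_{f_\beta}$ from the interpolation of $\omega_{\hf}$ and $\eta_{\hf}$ in Section \ref{sec:dRcoh}, the $\log^{[r+1]}$ normalization, and the other constants from Section \ref{sec:PR}. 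The main obstacle is the Bloch--Kato / Soul\'e injectivity invoked in the third paragraph: it is the unique step where a one-dimensional description of Galois cohomology enters, and it also determines which half of weight space the argument runs on. The remaining bookkeeping is a direct combination of the reciprocity law and the factorization formula, and should go through without further surprises.
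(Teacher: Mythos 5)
Your proposal is correct and follows essentially the same route as the paper: the paper's proof is a one-line appeal to the "previous discussion," which is precisely the iterative argument you reconstruct — if $d_{\hf}(r)=0$ then $\kappa(f_\beta)$ vanishes, so one divides both $\kappa(\hf)$ and $d_{\hf}$ by $X$ and repeats, the process terminating after exactly $n$ steps, after which the leading-term image is read off from the reciprocity law and the factorization formula. The only thing you add beyond the paper is an explicit justification (via the rank-one Iwasawa cohomology and the non-vanishing of $L_p(\tau)$) of the implication "regulator vanishes $\Rightarrow$ class vanishes," which the paper asserts without comment.
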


\begin{proof}
This follows directly from the previous discussion and the definition of $d^{*}_{\hf}(r)$.
\end{proof}

Observe that the quotient $\frac{L_p'(f_{\beta},\chi)}{L_p'(\operatorname{Ad} f_{\beta})}$ does not depend on the cyclotomic variable $\hj$, and the same is true for $d^{*}_{\hf}(r)$. Hence, this can be understood as a constant function over the Iwasawa algebra. Further, according to Theorem \ref{thm:bd}, \[ L_p(f_{\beta}, \hj+1) = \lambda \cdot \log^{[r+1]} \cdot L_p(\psi,\hj+1) \cdot L_p(\tau,\hj-r). \]

\begin{remark}
As we have already emphasized, Proposition \ref{prop:leading-term} can be adapted to the situation where $L_p'(f_{\beta},\chi)$ vanishes, by considering instead the product of the first non-vanishing term in the Taylor expansion and the leading term in the Laurent expansion of $d_{\hf}(\hk)$.
\end{remark}

\begin{definition}
Let \[ \kappa^*(f_{\beta}) \in H^1(\QQ, \QQ_p(\tau^{-1}) \otimes \cH_{\Gamma}(-\hj)) \] stand for the class obtained from $\kappa(\hf)$ after multiplying it by $X^{-n}$. 
\end{definition}

\subsection{Comparison with circular units}

In this section, we finally establish the main theorem of this note, relating the specialization of the Beilinson--Kato at the critical Eisenstein point with the system of circular units.

To establish the main result, we want to impose that $(-1)^r = \psi(-1)$. This assumption implies that $\tau$ is even and gives two possibilities for $\psi$ and $r$: either $r$ is even and $\psi$ is even, or $r$ is odd and $\psi$ is odd. If $r$ is odd and $\tau$ is even, the circular unit lies in the minus part of the Iwasawa cohomology, and hence the results we have discussed about circular units do not apply. Hence, we also assume that $r$ is even.

\begin{assumption}\label{ass:sign}
The integer $r$ is even and the Dirichlet character $\tau$ is also even.
\end{assumption}

The following result justifies why we have considered $L_p'(\operatorname{Ad} f_{\beta})$ and $L_p'(f_{\beta},\chi)$.

\begin{proposition}
If Assumption \ref{ass:sign} holds true, then $L_p(\operatorname{Ad} f_{\beta}) = 0$ and the (one-variable) $p$-adic $L$-function $L_p(\hf,\chi)(k,k/2)$ vanishes at $k=r$.
\end{proposition}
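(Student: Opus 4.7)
The plan is to prove the two assertions separately, appealing to results already established in the paper.

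For the vanishing $L_p(\operatorname{Ad} f_{\beta}) = 0$, I would invoke directly Bella\"iche's theorem recalled earlier in this section: the adjoint $p$-adic $L$-function vanishes at every classical Eisenstein point of the Coleman--Mazur eigencurve, regardless of whether the weight map is \'etale there. Since $f_{\beta}$ is the critical $p$-stabilization of the Eisenstein series $E_{r+2}(\psi, \tau)$, the corresponding point on the family $\hf$ is of exactly this type, so the vanishing is immediate.

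For the vanishing of $L_p(\hf, \chi)(k, k/2)$ at $k = r$, the plan is to specialize the two-variable Kitagawa--Mazur $p$-adic $L$-function at the critical Eisenstein point, obtaining the one-variable cyclotomic $p$-adic $L$-function $L_p(f_{\beta}, \chi, s)$ attached to the twist of $f_{\beta}$ by $\chi$. I would then apply a twisted analogue of Theorem \ref{thm:bd} (Bella\"iche--Dasgupta applied to $E_{r+2}(\psi, \tau) \otimes \chi$, whose attached characters are $(\psi\chi, \tau\chi)$), which gives that this $p$-adic $L$-function vanishes identically on the half of weight space determined by the parity condition $(-1)^s = (\psi\chi)(-1)$. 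Under Assumption \ref{ass:sign} together with the nebentype relation $\psi\tau(-1) = (-1)^r$, both $\psi$ and $\tau$ are even; and $\chi$ is chosen odd in the Beilinson--Kato setup in the current sign regime. Consequently $(\psi\chi)(-1) = -1$, and a direct parity check shows that the point $(k, s) = (r, k/2)$ lies on the vanishing side of the dichotomy, yielding the desired conclusion.

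The main obstacle I anticipate is the extension of Bella\"iche--Dasgupta's argument to the twisted setting: their proof proceeds via the intertwining operator $\Theta_k$ (mentioned in the proof of Theorem \ref{thm:bd}) relating critical Eisenstein series of weight $r+2$ to ordinary Eisenstein series of weight $-r$, and one has to check that this construction extends without substantial modification after twisting by a non-trivial Dirichlet character $\chi$. A secondary, bookkeeping obstacle is the careful tracking of variable conventions---specifically, whether the ``central line'' $s = k/2$ in the paper's notation corresponds to $s = k/2$ or $s = k/2 + 1$ in the Hecke normalization---so that the parity check at $(r, k/2)$ indeed identifies the vanishing half of weight space rather than the non-vanishing one.
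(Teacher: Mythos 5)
Your proposal follows the paper's proof essentially verbatim: the paper likewise deduces $L_p(\operatorname{Ad} f_{\beta})=0$ from Bella\"iche's theorem recalled earlier in that section (the adjoint $p$-adic $L$-function vanishes at critical Eisenstein points even though the weight map is \'etale there), and obtains the vanishing of $L_p(\hf,\chi)(k,k/2)$ at $k=r$ directly as a consequence of Theorem \ref{thm:bd}. You are in fact slightly more explicit than the paper, which cites Theorem \ref{thm:bd} without spelling out the twisted analogue for $E_{r+2}(\psi\chi,\tau\chi)$ or the parity bookkeeping that you correctly flag as the one delicate point.
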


\begin{proof}
The vanishing of $L_p(\operatorname{Ad} f_{\beta})$ follows from the aforementioned results of \cite[\S9]{eigenbook}. Further, $L_p(\hf,\chi)(k,k/2) = 0$ as a consequence of Theorem \ref{thm:bd}.
\end{proof}

In particular, this lead us to consider the derivatives of the above $p$-adic $L$-functions. It is an interesting question, that we have not being able to solve, to determine if these derivatives may be expressed in terms of {\it interesting} arithmetic objects related with the modular form.




We can now state and prove the main result of the note.
\begin{theorem}
Assume that $r$ is even and that both $\psi$ and $\tau$ are even. Under Assumption \ref{ass-1} and Assumption \ref{ass-2}, the following equality holds in $H^1(\QQ, \QQ_p(\tau^{-1})(1+r) \otimes \mathcal H_{\Gamma}(-\hj)))$: \[ \kappa^*(f_{\beta}) = \left( \frac{C \cdot \lambda \cdot \mathcal E_N \cdot L_p'(f_{\beta},\chi)}{L_p'(\operatorname{Ad} f_{\beta})} \right) \cdot L_p(\psi,\hj+1) c(\tau)(r), \] where $C$ is an explicit constant which does not depend on $j$.
\end{theorem}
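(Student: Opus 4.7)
The strategy is to reduce the claimed identity in cohomology to an equality of Perrin-Riou regulators in $\cH_\Gamma$, by invoking step (4) from the list of ingredients in the introduction. Soul\'e's theorem on the Bloch--Kato conjecture for the representation $\QQ_p(\tau^{-1})(1+r)$ twisted by $-\hj$ guarantees that the Iwasawa cohomology group $H^1(\QQ, \QQ_p(\tau^{-1})(1+r) \otimes \cH_\Gamma(-\hj))$ is free of rank one over $\cH_\Gamma$, and on this module the regulator $\mathcal L^{\PR}_{\tau^{-1}}$ (composed with pairing against a fixed basis of the relevant $\Dcris$) is injective with image a nonzero ideal. Since the circular unit $c(\tau)(r)$ also lives in this module, the theorem reduces to showing that the regulators of both sides agree, up to the same constant $C$.

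For the left-hand side, I would apply Proposition \ref{prop:leading-term} directly: the image of $\kappa^*(f_{\beta})$ under the Perrin-Riou regulator equals
\[ d^*_{\hf}(r) \cdot \frac{\mathcal E_N \cdot L_p'(f_{\beta},\chi)}{L_p'(\operatorname{Ad} f_{\beta})} \cdot L_p(E_{r+2}(\psi,\tau), \hj+1), \]
which is justified, under Assumption \ref{ass-1} and Assumption \ref{ass-2}, by the family-level reciprocity law Theorem \ref{theorem:rec-law-critical} together with the leading-term argument dividing by the appropriate power of $X$. To convert this into the shape appearing in the statement, I would apply Bella\"iche--Dasgupta's factorization (Theorem \ref{thm:bd}(b))
\[ L_p(f_{\beta}, \hj+1) \,=\, (\mathrm{const}) \cdot \lambda \cdot \log^{[r+1]} \cdot L_p(\psi, \hj+1) \cdot L_p(\tau, \hj-r), \]
which is valid on the half of weight space cut out by $(-1)^{\hj+1} = -\psi(-1)$, a sign condition met by the cyclotomic variable here precisely because both $\psi$ and $\tau$ are assumed even.

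For the right-hand side, the reciprocity law for circular units recalled in Section \ref{sec:circular} identifies the Coleman map of $\loc_p(c(\tau)(r))$ with a scalar multiple of $L_p(\tau, \hj - r)$, the scalar being independent of $\hj$. The factor $L_p(\psi,\hj+1)$ is a scalar in $\cH_\Gamma$ and therefore passes through the regulator. Matching the two sides, the $\log^{[r+1]}$ factor appears on the left from the identity
\[ \mathbf{a}_{\hf}^- \bmod X \,=\, d_r \cdot t^{r+1} \omega_{f_r} \otimes e_{-(r+1)} \]
used in Section \ref{sec:PR}, which converts multiplication by $t^{r+1}$ on the $(\varphi,\Gamma)$-module side into multiplication by $\log^{[r+1]}$ on the distribution side; this is exactly the factor appearing in Bella\"iche--Dasgupta, so it cancels cleanly.

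The main obstacle is the precise identification of the constant $C$. This constant absorbs two distinct period discrepancies: the interpolation constant $d^*_{\hf}(r)$ for Ohta's canonical differential $\eta_{\hf}$ in a neighborhood of the critical Eisenstein point (flagged already in the introduction as the ``less understood piece''), and the normalization relating the Bloch--Kato basis supplied by the circular unit to the basis implicit in the application of the Perrin-Riou regulator to $\kappa^*(f_{\beta})$. The crucial observation that makes the argument go through is that both of these constants depend only on $f_\beta$ and $\tau$ and not on the cyclotomic variable $\hj$, so they combine into a single constant independent of $\hj$; once this is absorbed into $C$, the equality of regulators forces the claimed equality of cohomology classes by the injectivity afforded by Bloch--Kato.
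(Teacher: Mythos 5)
Your proposal is correct and follows essentially the same route as the paper: reduce to an identity of Perrin-Riou regulators via the one-dimensionality supplied by Bloch--Kato, compute the regulator of $\kappa^*(f_{\beta})$ from the leading-term proposition and the family reciprocity law (i.e.\ by differentiating the Hida--Rankin factorization at the critical Eisenstein point), apply Bella\"iche--Dasgupta to $L_p(f_{\beta},\hj+1)$, and match against the Coleman-map image of $c(\tau)(r)$, with the $\log^{[r+1]}$ factor cancelling against the $t^{r+1}$ twist in $\mathbf{a}_{\hf}^-$. Your explicit tracking of that cancellation and of the two period constants absorbed into $C$ is in fact slightly more detailed than the paper's own write-up, which relegates those points to surrounding remarks.
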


\begin{proof}
We begin by noting that the Bloch--Kato conjecture implies that $H^1(\QQ, \QQ_p(\tau^{-1})(1+r) \otimes \mathcal H_{\Gamma}(-\hj)))$ is one-dimensional. Hence, it suffices to compare the image of both classes under the Perrin-Riou map presented in Section \ref{sec:PR}. 

Using next Theorem \ref{thm:bd}, the result is equivalent to checking the following equality between (two-variable) $p$-adic $L$-functions: \[ L_p(\hf,\hE) \cdot L_p'(\operatorname{Ad} f_{\beta}) = \mathcal E_N \cdot L_p'(f_{\beta},\chi) \cdot L_p(\hf). \] Here, $\hE$ is the Eisenstein family of characters $(1,\chi)$. The result follows by taking the derivative along the weight direction in the formula \[ L_p(\hf,\hE) \cdot L_p(\operatorname{Ad} \hf)(k) = \mathcal E_N \cdot L_p(\hf,\chi)(k,k/2) \cdot L_p(\hf), \] established in Section \ref{sec:HR}.
\end{proof}

\begin{remark}
Contrarily to which occurs in the Beilinson--Flach case, there is not a logarithmic contribution in the formula. In \cite{LR1} that contribution came from the class itself, since we had been able to argue, in geometric terms, that it was divisible by $\log^{[r+1]}$. Here, although a term of that kind arises in the factorization formula of the $p$-adic $L$-function, it is canceled out by the Coleman map.
\end{remark}

As we have already discussed, the terms $L_p'(f_{\beta},\chi)$ and $L_p'(\operatorname{Ad} f_{\beta})$ do not depend on the cyclotomic variable, so we may drop them from the statement. Roughly speaking, we can say that $\kappa^*(f_{\beta})$ agrees with $c(\tau)(r)$ multiplied by the Kubota--Leopoldt $p$-adic $L$-function of $\psi$.

\begin{corollary}
Assume that $r$ is even and that both $\psi$ and $\tau$ are even. Under Assumption \ref{ass-1} and Assumption \ref{ass-2}, the following equality holds in $H^1(\QQ, \QQ_p(\tau^{-1})(1+r) \otimes \mathcal H_{\Gamma}(-\hj))$: \[ \kappa^*(f_{\beta}) = \tilde C \cdot \lambda \cdot \mathcal E_N \cdot L_p(\psi,\hj+1) \cdot c(\tau)(r), \] where $\tilde C$ is an explicit constant which does not depend on $j$.
\end{corollary}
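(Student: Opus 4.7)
The proof plan is a direct repackaging of the main theorem stated immediately above. That theorem, established under Assumptions \ref{ass-1} and \ref{ass-2}, already gives the identity
\[ \kappa^*(f_{\beta}) = \left( \frac{C \cdot \lambda \cdot \mathcal E_N \cdot L_p'(f_{\beta},\chi)}{L_p'(\operatorname{Ad} f_{\beta})} \right) \cdot L_p(\psi,\hj+1) \cdot c(\tau)(r) \]
in $H^1(\QQ, \QQ_p(\tau^{-1})(1+r) \otimes \mathcal H_{\Gamma}(-\hj))$. The task therefore reduces to isolating which factors on the right-hand side genuinely depend on the cyclotomic variable carried by $\mathcal H_{\Gamma}$, and which are mere scalars.

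The key observation is that both $L_p'(f_{\beta},\chi)$ and $L_p'(\operatorname{Ad} f_{\beta})$ are, by their very definition (see the discussion preceding Theorem \ref{theorem:rec-law-critical} and Section \ref{sec:HR}), weight-direction derivatives evaluated at the single point corresponding to $f_\beta$; they are elements of $L$, not functions of $\hj$. Likewise, the constant $C$ is the period constant arising from the interpolation of Ohta's canonical differentials, and is by construction independent of the cyclotomic variable. Assumption \ref{ass-2} asserts that $L_p(\operatorname{Ad} \hf)$ has a simple zero at the critical Eisenstein point, so that $L_p'(\operatorname{Ad} f_{\beta}) \in L^{\times}$, which guarantees that the ratio below is well defined.

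The plan is then simply to set
\[ \tilde{C} \;:=\; \frac{C \cdot L_p'(f_{\beta},\chi)}{L_p'(\operatorname{Ad} f_{\beta})} \;\in\; L, \]
and to substitute this definition into the preceding formula. The resulting equality is exactly the statement of the corollary, with the three genuinely $\hj$-dependent factors $\lambda$, $\mathcal E_N$, and $L_p(\psi,\hj+1)$ left untouched, together with the Iwasawa class $c(\tau)(r)$. No new input is required beyond the main theorem; in particular, the comparison in Iwasawa cohomology already carried out there (via the one-dimensionality of the ambient space coming from the Bloch--Kato conjecture, the reciprocity laws of Section \ref{sec:PR}, and Theorem \ref{thm:bd}) is precisely what gives the identity we are repackaging.

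There is no real obstacle in this step; the corollary is nothing more than a cleaner restatement of the theorem, made possible by collecting all weight-direction scalars into a single anonymous constant. The only mild care to exercise is to confirm, as done above, that $C$, $L_p'(f_{\beta},\chi)$, and $L_p'(\operatorname{Ad} f_{\beta})$ are indeed independent of $\hj$, and that the denominator is non-zero under Assumption \ref{ass-2}; were one to drop the latter, one could instead replace $\tilde{C}$ by the leading term of the appropriate Laurent expansion, in the spirit of the leading-term argument of Proposition \ref{prop:leading-term}.
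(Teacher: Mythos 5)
Your proposal matches the paper's own (implicit) argument exactly: the paper derives the corollary from the preceding theorem by observing that $L_p'(f_{\beta},\chi)$ and $L_p'(\operatorname{Ad} f_{\beta})$ do not depend on the cyclotomic variable and absorbing them, together with $C$, into the single constant $\tilde C$. Your additional remarks on the non-vanishing of the denominator under Assumption \ref{ass-2} and on the Laurent-expansion fallback are consistent with the paper's own discussion surrounding Proposition \ref{prop:leading-term}.
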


\section{Vanishing of the cohomology class and a factorization formula}\label{sec:other-cases}

We emphasize some limitations of our method and propose natural extensions of this project. These extensions are related to changes in the parity of either $\tau$ or $\psi$. We address two distinct issues: a factorization formula \'a la Bella\"iche--Dasgupta when $\tau$ is odd; and the situation arising from the vanishing of the class $\kappa^*(f_{\beta}) \in H^1(\QQ, \QQ_p(\tau^{-1})(1+r) \otimes \mathcal H_{\Gamma}(-\hj))$.

\subsection{A conjectural factorization formula}

For our later use, we consider the following assumption, that we will assume for most of the results of this section.

\begin{assumption}\label{ass:sign2}
The integer $r$ is even and the Dirichlet character $\tau$ is odd.
\end{assumption}

Two additional cases remain to be discussed, corresponding to the less common scenario where the integer $r$ is odd. We explain how our results can be adapted to this setting towards the end of the section.

Under Assumption \ref{ass:sign2}, it is natural to wonder if one may get a result analogous of Bella\"iche--Dasgupta to express the weight derivative $L_p'(f_{\beta})$ as the product of two Kubota--Leopoldt $p$-adic $L$-functions. This is motivated by the vanishing of $L_p(f_{\beta})$, which suggests the study of its derivative. This is explained in the following conjecture, whose proof seems beyond the techniques we have used in this work. At the end of the section we discuss some theoretical evidence towards it.

\begin{conjecture}\label{conj:fact}
If $\tau$ is odd--either under Assumption \ref{ass-2} or when $r$ is odd and $\tau$ is also odd--the following factorization formula holds: \[ L_p'(\hf)(r) = \lambda \cdot \log^{[r+1]} L_p(\psi,\hj+1) \cdot L_p(\tau,\hj-r), \] where $\lambda$ is the previously defined Iwasawa function and $L_p'(\hf)$ denotes the weight derivative of the $p$-adic $L$-function.
\end{conjecture}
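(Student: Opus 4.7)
The plan is to prove Conjecture \ref{conj:fact} by developing a two-variable extension of the Bella\"iche--Dasgupta factorization along the Coleman family $\hf$ through the critical Eisenstein point. The conjecture can be read as a ``secondary'' version of Theorem \ref{thm:bd}: Bella\"iche--Dasgupta describe $L_p(f_\beta, s)$ on half of the weight space, and the conjecture proposes an analogous description of the weight derivative $\partial_k L_p(\hf)(k,s)|_{k=r}$ on the complementary sign region, where the Kitagawa--Mazur function itself is expected to vanish or degenerate.

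The main route I would pursue is via the Euler system of Beilinson--Kato classes. Combining Theorem \ref{theorem:rec-law-critical} with the Hida--Rankin factorization of Proposition \ref{prop:factor-hr}, one expresses $L_p(\hf)$, as a two-variable function on $U \times \cW$, in terms of the Perrin-Riou regulator of $\kappa(\hf)$, the adjoint $L_p(\operatorname{Ad}\hf)$, and an auxiliary factor $L_p(\hf,\chi,\hk/2)$. Since $L_p(\operatorname{Ad}\hf)$ vanishes at $f_\beta$, taking the weight derivative of this identity at $k=r$ reduces the conjecture to an identity involving $\operatorname{Col}(\kappa(\hf))/L_p'(\operatorname{Ad} f_\beta)$. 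Specializing the main comparison of Theorem \ref{theorem-main} to the current sign regime -- where the comparison with circular units is expected to yield $c(\psi)(-1)$ rather than $c(\tau)(r)$, via the projection $\kappa(f_\beta)_2$ discussed in the introduction -- would then identify $\operatorname{Col}(\kappa(\hf))$ with a product involving $L_p(\tau,\hj-r)$, from which Bella\"iche--Dasgupta's original formula on the opposite half of the cyclotomic region recovers the companion factor $L_p(\psi,\hj+1)$. The constants $\lambda$ and $\log^{[r+1]}$ would emerge as in Section \ref{sec:PR}, respectively from the $N_2$-dependence of the $\tau$-twist and from the Frobenius shift between the $\mathcal F^+$ subfiltration and the differential $\omega_{f_\beta}$ at the bad fiber.

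The main obstacle is precisely the description of the class $\kappa(f_\beta)_2$, which is explicitly left open in Section \ref{sec:other-cases} of this paper: without an explicit identification of its Perrin-Riou regulator with a circular-unit type $p$-adic $L$-function, the reciprocity-based route stalls at the crucial step. An intrinsic alternative would be to extend Bella\"iche's operator $\Theta_r$ on overconvergent modular symbols to a first-order deformation along $\hf$, packaging the conjectured factorization into a modular-symbol statement whose value at $k=r$ recovers Bella\"iche--Dasgupta and whose first-order term in the weight direction recovers the conjecture. However, this would require a precise understanding of the degeneration of the Eichler--Shimura comparison around the critical Eisenstein point, which is the ``less understood'' piece encoded by the constant $C$ appearing in Theorem \ref{theorem-main}. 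Either route appears to demand genuinely new input beyond the techniques of the present note.
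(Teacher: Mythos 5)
The statement you were given is stated in the paper as an open conjecture: the authors say explicitly that its proof seems beyond the techniques of the note, and they offer only heuristic evidence for it (the discussion surrounding Table~\ref{table: parity}). Your proposal is therefore correctly calibrated in spirit --- it is a strategy outline, not a proof --- and the strategy is essentially the one the authors themselves use as evidence: combine the reciprocity law of Theorem~\ref{theorem:rec-law-critical} with the Hida--Rankin factorization of Proposition~\ref{prop:factor-hr}, differentiate in the weight at $k=r$ where both $L_p(\operatorname{Ad}\hf)$ and $L_p(\hf)$ degenerate, and use the expected comparison of the degenerate Kato class with a circular unit to force the factor $L_p(\tau,\hj-r)$ into $L_p'(\hf)(r)$, with $\lambda$ and $\log^{[r+1]}$ supplied by analogy with Theorem~\ref{thm:bd}. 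The obstruction you name --- the lack of an explicit description of the relevant degenerate class and of the Eichler--Shimura interpolation at the critical Eisenstein point (the constant $C$) --- is exactly the obstruction the authors acknowledge. So there is no error of principle, but there is also no proof: neither route you sketch is carried to completion, and the conjecture remains open in the paper. You should not present this as a proof.

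One substantive point to correct in your sketch: you route the argument through $\kappa(f_{\beta})_2$ and the unit $c(\psi)(-1)$, but in the sub-case the authors actually invoke as evidence ($r$ odd, $\psi$ even, $\tau$ odd, the last row of Table~\ref{table: parity}) the class $\kappa(f_{\beta})$ itself is non-vanishing for parity reasons and the expected comparison is with $c(\tau)$, built by interpolating $u_{\tau\xi}$ for odd $\xi$ of $p$-power conductor; it is $L_p(f_{\beta},\chi)$ that is non-zero there while $L_p(f_{\beta})$ vanishes, which is precisely what pushes $L_p(\tau,\hj-r)$ into the derivative. The class $\kappa(f_{\beta})_2$ and the unit $c(\psi)$ are relevant only in the complementary sub-case ($r$ even, $\psi$ and $\tau$ both odd), where $\kappa(f_{\beta})$ vanishes identically and the Perrin-Riou map lands in a different quotient of the filtration \eqref{chain}. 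Conflating the two regimes would derail the reciprocity computation. Your alternative suggestion of deforming Bella\"iche's operator $\Theta_k$ to first order in the weight is not in the paper and is a reasonable direction, but, as you note, it demands exactly the control of the Eichler--Shimura degeneration that the paper flags as missing.
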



It is interesting to observe the analogy with the treatment of \cite[\S5]{BB24}, where the higher order derivatives of the $p$-adic $L$-function $L_p(\hf)$ are studied. However, in loc.\,cit. the vanishing is due to the ramification of the eigencurve (in the cuspidal case, $e \geq 2$), and one can use the idea of Bella\"iche of defining {\it secondary $p$-adic $L$-functions}. Here, the vanishing of $L_p(\hf)$ at the Eisenstein point is due not to the ramification, but to the vanishing of the corresponding eigenspace of modular symbols. However, it still makes sense to try to capture the information of the Eisenstein representation as the sum of two characters.

\subsection{Vanishing of the Kato class}

Under Assumption \ref{ass:sign2}, we may proceed as in previous sections and take the projection of $\frac{1}{X}V^c(\hf)$ onto the quotient \[ \frac{\frac{1}{X}V^c(\hf)^*}{V(\hf)^*} \cong \QQ_p(\tau^{-1})(1+r) \otimes \mathcal H_{\Gamma}(-\hj). \] Considering the projection of the Kato class in cohomology, we obtain a class \[ \kappa(f_{\beta}) \in H^1(\QQ, \QQ_p(\tau^{-1})(1+r) \otimes \mathcal H_{\Gamma}(-\hj)).\] In this section, with a slight abuse of notation, we see the case just along half of the discs of the weight space, and still use the same notation: \[ \kappa(f_{\beta}) \in H^1(\QQ, \QQ_p(\tau^{-1})(1+r) \otimes \mathcal H_{\Gamma}(-\hj_+)).\] Then, the Kato class is also a well defined class, and proceeding as in the previous section, we may refine it to obtain \[ \kappa^*(f_\beta) \in H^1(\QQ, \Qp(\tau^{-1}) \otimes \cH_{\Gamma}(-\hj_+)).\]

Perrin-Riou's regulator, with $f$ now fixed as $E_{r+2}(\psi,\tau)$, gives us a (one-variable) map
\[
\operatorname{Col}_{b_f^+} = \left\langle \mathcal{L}^{\mathrm{PR}}_{\cF^+}(-), b_{f}^+ \right\rangle :
H^1_{\mathrm{Iw}}(\QQ_{p, \infty}, \cF^+ D(f)^*) \to \cH_{\Gamma}.
\]

In the reciprocity law, the roles of the auxiliary $p$-adic $L$-functions $L_p(\hf,\chi)$ and $L_p(\hf)$ are switched now: their product is still zero, but now $L_p(\hf,\chi,s)$ is a priori non-vanishing (or at least, it does not vanish due to a {\it trivial} sign argument), while $L_p(\hf)$ is zero using the aforementioned results of \cite{bellaichedasgupta15}. Hence, we have the following proposition.

\begin{proposition}
Under Assumption \ref{ass:sign2}, if $L_p(f_{\beta},\chi,r/2+1)$, we have the following equality:
\[ \operatorname{Col}_{b_f^+}(\kappa^*(f_\beta)) \cdot L_p'(\operatorname{Ad} f_{\beta})  = c(\hk) \cdot L_p(f_{\beta},\chi,r/2+1) \cdot L_p'(f_{\beta},\hj_+1), \] where $c(\hk)$ is a meromorphic function on $U$ alone, which is identically zero or which is regular and non-vanishing at all integer weights $k \ge -1$ except possibly at $r$ itself, and regular at $k = r$.
\end{proposition}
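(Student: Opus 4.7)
The strategy closely parallels that of Theorem~\ref{theorem:rec-law-critical}, but is adapted to the parity regime of Assumption~\ref{ass:sign2}. The conceptual reason for replacing $\operatorname{Col}_{a_{\hf}^-}$ by $\operatorname{Col}_{b_{\hf}^+}$ is the filtration-flip described in Section~\ref{sec:dRcoh}: while $b_{\hf}^+$ generically specializes to $\omega_g$, at the critical Eisenstein point it specializes to $c_{f_\beta}\eta_{f_\beta}$, which is precisely the differential against which a Kitagawa--Mazur-type $p$-adic $L$-function pairs naturally. Hence at $\hk = r$ the Coleman map $\operatorname{Col}_{b_f^+}$ gives direct access to the $L$-function data encoded by $\kappa^*(f_\beta)$, bypassing the leading-term gymnastics needed for Theorem~\ref{theorem-main}.

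First, I would establish a two-variable reciprocity law over a punctured neighborhood of $f_\beta$ of the form
\[
\operatorname{Col}_{b_{\hf}^+}(\kappa(\hf)) \cdot L_p(\operatorname{Ad} \hf) = c(\hk) \cdot \mathcal E_N \cdot L_p(\hf,\chi,\hk/2+1) \cdot L_p(\hf).
\]
The proof follows the template of Theorem~\ref{theorem:rec-law-critical}: at non-critical cuspidal specializations the classical Rankin--Selberg reciprocity law, combined with the factorization of Proposition~\ref{prop:factor-hr}, pins the identity down up to a fudge function $c(\hk)$ of the weight variable alone, playing the role of $d_{\hf}(\hk)$ in the earlier argument. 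Its non-vanishing at integer weights $k\neq r$ follows from the non-degeneracy of the Rankin--Selberg reciprocity at those classical points; the only possible issue is at $\hk = r$ itself.

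Next, I would specialize the above identity at $\hk = r$. By Assumption~\ref{ass-2}, $L_p(\operatorname{Ad} \hf)$ has a simple zero at $\hk = r$; by Assumption~\ref{ass:sign2} together with Theorem~\ref{thm:bd}(a), the restriction of $L_p(\hf)$ to the fiber $\{r\}\times\cH_{\Gamma^+}$ vanishes identically, so $L_p(\hf)$ is divisible by $(\hk - r)$ along this half of the weight space. Dividing both sides by $(\hk - r)$ and taking the limit $\hk \to r$ produces the stated identity, with $L_p'(\operatorname{Ad} f_\beta)$ emerging from the adjoint zero, $L_p'(f_\beta, \hj_+ +1)$ from the vanishing of $L_p(\hf)$, and the scalars $c_{f_\beta}$ and $\mathcal E_N(r,\cdot)$ absorbed into the value $c(r)$.

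The main obstacle is verifying that $c(\hk)$ is regular at $\hk = r$ (so that its specialization at $r$ is finite) and generically non-zero, which gives the statement its non-triviality. This requires precise control over how the Perrin-Riou map on $\cF^+$ interpolates near $f_\beta$, in a setting where $b_{\hf}^+$ undergoes the filtration-flip between $\omega_g$ and $\eta_{f_\beta}$. Under the running hypothesis that $L_p(f_\beta,\chi,r/2+1)\neq 0$, the regularity is expected to hold; if not, one would fall back to a leading-term argument in powers of $X$ in the spirit of Proposition~\ref{prop:leading-term}, replacing $\kappa(\hf)$ by the appropriate refinement.
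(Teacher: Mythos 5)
Your proposal is correct and follows essentially the same route as the paper, whose entire proof of this proposition is the single sentence ``This follows by taking derivatives in the reciprocity law for Beilinson--Kato elements''; your expansion (two-variable reciprocity law for $\operatorname{Col}_{b_{\hf}^+}$ over a punctured neighbourhood, then dividing by the simple zeros of $L_p(\operatorname{Ad}\hf)$ and of $L_p(\hf)$ along the relevant half of weight space, with the fudge function $c(\hk)$ playing the role of $d_{\hf}(\hk)$) is exactly the argument the surrounding text intends. Your closing caveat about the regularity of $c(\hk)$ at $\hk=r$ and the fallback to a leading-term argument matches how the paper handles the analogous point in Theorem~\ref{theorem:rec-law-critical} and the remark following it.
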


\begin{proof}
This follows by taking derivatives in the reciprocity law for Beilinson--Kato elements.
\end{proof}

When $\tau$ is odd and $\tau(p) \neq 1$, we can directly assert that the Beilinson--Kato classes vanish.

\begin{proposition}
When the character $\tau$ is odd and $\tau(p) \neq 1$, the space $H^1(\QQ, \QQ_p(\tau^{-1})(1+r) \otimes \mathcal H_{\Gamma}(-\hj_+))$ is zero and the Beilinson--Kato $\kappa^*(f_\beta) \in H^1(\QQ, \Qp(\tau^{-1}) \otimes \cH_{\Gamma}(-\hj_+))$ class vanishes. 
\end{proposition}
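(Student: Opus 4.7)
The plan is to deduce the vanishing of $\kappa^*(f_\beta)$ from the vanishing of the ambient Iwasawa cohomology module $H^1(\QQ, \QQ_p(\tau^{-1})(1+r) \otimes \cH_\Gamma(-\hj_+))$; once this is proved, the assertion about $\kappa^*(f_\beta)$ follows because the class is contained in (a suitable presentation of) this zero module. I would start with a parity check: under Assumption~\ref{ass:sign2} we have $\tau(-1) = -1$ and $r$ even, so $1+r$ is odd and complex conjugation acts on $V := \QQ_p(\tau^{-1})(1+r)$ by $(-1)\cdot(-1) = +1$; hence $V$ is a one-dimensional \emph{even} Galois representation.

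I would then combine the global Euler--Poincar\'e formula for $G_{\QQ, S}$ with the weak Leopoldt conjecture (known for abelian characters) to obtain
\[
\operatorname{rank}_{\cH_\Gamma} H^1(\QQ, V \otimes \cH_\Gamma(-\hj)) \;=\; \dim_{\QQ_p} V^{c=-1} \;=\; 0,
\]
so the plus part $H^1(\QQ, V \otimes \cH_\Gamma(-\hj_+))$ is $\cH_{\Gamma^+}$-torsion. To upgrade torsion to outright vanishing, I would specialize at arithmetic characters $\sigma\colon \gamma \mapsto \gamma^j \xi(\gamma)$ of $\Gamma^+$ with $j$ even and $\xi$ a finite-order even character of $p$-power conductor; these form a Zariski-dense subset of $\mathrm{Spec}\,\cH_{\Gamma^+}$. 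Each specialization maps into
\[
H^1(\QQ, \QQ_p(\tau^{-1} \varepsilon^{1+r-j} \xi^{-1})),
\]
whose twisted character is even (by a direct parity count) and is neither trivial nor equal to $\varepsilon$, because $\tau$ is non-trivial with conductor coprime to $p$ and so cannot coincide with any modification of a power of $\varepsilon$ by a character of $p$-power conductor. By Brumer's theorem -- the truth of Leopoldt's conjecture for abelian extensions of $\QQ$ -- these cohomology groups all vanish.

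It then remains to pass from ``fibres vanish on a Zariski-dense set'' to ``the coherent $\cH_{\Gamma^+}$-module itself is zero''. Since $\tau$ is non-trivial with conductor coprime to $p$, the restriction $\tau|_{G_{\QQ(\mu_{p^\infty})}}$ remains non-trivial, so $V^{G_{\QQ(\mu_{p^\infty})}} = 0$. This standard input ensures both that the Iwasawa module carries no pseudo-null $\cH_{\Gamma^+}$-submodules and that the control map $H^1_{\mathrm{Iw}}/(\hj_+ - \sigma) \hookrightarrow H^1(\QQ, V(-\sigma))$ is injective at every arithmetic $\sigma$ of the form above. The hypothesis $\tau(p) \neq 1$ enters precisely here, ruling out the exceptional local situation at $p$ in which a trivial Frobenius eigenvalue could obstruct injectivity and yield a torsion contribution invisible at generic specializations. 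With injective control secured, vanishing of the target on a Zariski-dense locus forces the torsion module to be zero, and hence $\kappa^*(f_\beta) = 0$.

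I expect the most delicate step to be this last one: tracking the Poitou--Tate five-term exact sequence attached to the control map and making precise how the assumption $\tau(p) \neq 1$ excludes the exceptional-zero obstruction at $p$. The other ingredients -- the parity calculation, the Euler--Poincar\'e formula, weak Leopoldt, and Brumer's theorem -- are classical and assemble cleanly into the torsion-plus-vanishing-fibres statement.
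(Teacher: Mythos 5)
Your argument is essentially correct but reaches the conclusion by a genuinely different route from the paper. The paper's own proof is a two-line sketch: Beilinson--Kato classes are universal norms, hence unramified outside $p$, and the resulting relaxed-at-$p$, unramified-outside-$p$ Selmer group of $\QQ_p(\tau^{-1})(1+r-j)$ is computed directly --- for $\tau$ odd and $r,j$ even it reduces to the relevant eigenspace of the $p$-units of the field cut out by $\tau$, which vanishes because $\tau$ is odd (killing the archimedean unit contribution) and $\tau(p)\neq 1$ (killing the contribution of the primes above $p$). You instead kill the full $S$-ramified Iwasawa cohomology globally: Euler--Poincar\'e plus weak Leopoldt gives rank zero on the plus part since the representation is even, and the absence of $\cH_{\Gamma^+}$-torsion (equivalently $V^{G_{\QQ(\mu_{p^\infty})}}=0$, which holds because $\tau$ is nontrivial of conductor prime to $p$) then forces vanishing. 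Your route needs no choice of local conditions, and it in fact shows that the Iwasawa module itself vanishes without ever invoking $\tau(p)\neq 1$: that hypothesis only affects the single exceptional specialization $j=r$ of the finite-level groups, which your Zariski-density argument never touches.

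Two imprecisions are worth fixing. First, Brumer's theorem gives the vanishing of $H^1(G_{\QQ,S},\QQ_p(\eta))$ only for $\eta$ even, nontrivial and of \emph{finite order}; your specializations carry a nonzero Tate twist $\varepsilon^{1+r-j}$, so you need Soul\'e's theorem for $H^2(G_{\QQ,S},\QQ_p(\eta_0)(m))$ with $m\ge 2$ (take $j\le r-2$ even, which is still Zariski dense in the plus part), or input from the Mazur--Wiles main conjecture; the conclusion stands but the citation does not. Second, your account of where $\tau(p)\neq 1$ enters is wrong: the control map $H^1_{\mathrm{Iw}}/(\hj_+-\sigma)\rightarrow H^1(\QQ,V(-\sigma))$ is \emph{always} injective by the standard long exact sequence coming from $0\to\Lambda\to\Lambda\to\Lambda/(\gamma-\sigma(\gamma))\to 0$, with the failure of surjectivity absorbed by torsion in $H^2_{\mathrm{Iw}}$. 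No exceptional local situation at $p$ can obstruct injectivity. What $\tau(p)=1$ actually produces is a $p$-unit class in $H^1(\QQ,\QQ_p(\tau^{-1})(1))$ at the specialization $j=r$; this class lives in the cokernel of control, not in the Iwasawa module, and is precisely the ``extended Selmer group'' phenomenon the paper raises in the remark following this proposition.
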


\begin{proof}
The Beilinson--Kato classes are universal norms, so it is a standard property that they are unramified outside $p$. If $\tau(p) \neq 1$, a standard computation shows that the dimension of the space is zero.
\end{proof}

This also means that either $L_p(f_{\beta},\chi,r/2+1)$ or $L_p'(f_{\beta},\hj_+1)$ must vanish. Further, since the projection of $\kappa(\hf)$ to the cohomology of the quotient \[ \frac{\frac{1}{X}V^c(\hf)^*}{V(\hf)^*} \cong \QQ_p(\tau^{-1})(1+r) \otimes \mathcal H_{\Gamma}(-\hj_+) \] vanishes, we may consider the lift \[ \kappa(\hf)_2 \in H^1(\QQ, V(\hf) \otimes \mathcal H_{\Gamma}(-\hj_+)).\]

\begin{definition}
We write $\kappa(f_{\beta})_2 \in H^1(\QQ,\QQ_p(\psi^{-1}) \otimes \mathcal H_{\Gamma}(-\hj))$ for the projection of $\kappa(\hf)_2$ onto the cohomology of the quotient \[ \frac{V(\hf)^*}{V^c(\hf)^*} \cong \QQ_p(\psi^{-1}) \otimes \mathcal H_{\Gamma}(-\hj_+). \]
\end{definition}

This definition does not only make sense in the case where $\tau$ is odd, but also in the even cases where $\hat \kappa(f_{\beta})$ vanishes (e.g. when $r$ is odd and $\tau$ is odd).

\begin{remark}
Observe that this object is analogous to the class $\kappa_{f,2}$ introduced in \cite{RiRo}, where it was described borrowing the tools introduced by Fukaya and Kato \cite{fukayakato12} in the study of Sharifi's conjectures \cite{sharifi12}.
\end{remark}

Recall that in previous section the Kato class was compared with the circular unit encoded in $L_p(\tau,\hj-r)$. When $r$ is odd and $\psi$ is even, it would be natural to expect a comparison with the circular unit $c(\psi)(-1)$, encoded in $L_p(\psi,\hj+1)$. In particular, Conjecture \ref{conj:fact} suggests an explicit comparison between this Kato class and that unit, that we expect to further develop in future work.

\subsection{The case of odd weight}

We still need to consider the cases where $r$ is odd. If $\tau$ is even and $\psi$ is odd, the same arguments explored in this section apply (provided that $\tau(p) \neq 1$), since we are in the part of Iwasawa cohomology that vanishes. Finally, if $\tau$ is odd and $\psi$ is even, the class \[ \kappa(f_{\beta}) \in H^1(\QQ, \QQ_p(\tau^{-1}) \otimes \cH_{\Gamma}(-\hj_+)) \] is a priori non-trivial (or, at least, it is not zero due to trivial sign reasons), but $L_p(\hf) = 0$, since the factorization formula of Bella\"iche--Dasgupta does not apply. In this case, we are unable to prove any satisfactory result.

\begin{remark}
We have emphasized that we are working with $\cH_{\Gamma}(-\hj_+)$, since twisting by odd character can indeed give non-zero values, for which one can apply the methods of Section \ref{sec:def} to relate them with circular units.
\end{remark}

Table \ref{table: parity} summarizes the four possible cases according to parity, assuming the absence of exceptional zeros. The BD column indicates with a check-mark whether the factorization formula of \cite{bellaichedasgupta15} (as a product of two Kubota--Leopoldt $p$-adic $L$-function) holds for $L_p(\hf)$. The columns $\kappa(f_{\beta})$ and $\kappa(f_{\beta})_2$ contain a check-mark if the corresponding elements $\kappa(f_{\beta})$ and $\kappa(f_{\beta})_2$ do not vanish for reasons related to parity.

\begin{table}[ht]
    \centering
    \renewcommand{\arraystretch}{1.5} 
    \setlength{\tabcolsep}{10pt} 
    \begin{tabular}{c|c|c|c|c|c}
        $r$ & $\psi$ & $\tau$ & BD & $\kappa(f_{\beta})$ & $\kappa(f_{\beta})_2$ \\
        \hline
        Even & Even & Even & \checkmark & \checkmark & \\
        Even & Odd & Odd & & & \checkmark \\
        Odd & Odd & Even & \checkmark & & \checkmark \\
        Odd & Even & Odd & & \checkmark & \\
    \end{tabular}
    \caption{Possible cases according to parity.}
    \label{table: parity}
\end{table}

The last case ($r$ odd, $\psi$ even and $\tau$ odd) sheds some light on the understanding of Conjecture \ref{conj:fact}. In this case, we can make sense of an Iwasawa cohomology class $c(\tau)$, obtained by $p$-adically interpolating the units corresponding to $\tau \xi$, where $\xi$ an odd character of $p$-power conductor. Hence, it makes sense to expect a comparison mimicking Theorem \ref{theorem-main}. In that case, $L_p(f_{\beta},\chi)$ is non-zero, while $L_p(f_{\beta})$ vanishes for the range of characters we are considering. This means that the derivative must somehow contain the information regarding $L_p(\tau,\hj-r)$ to carry out the comparison, thus supporting our conjecture. from a theoretical point of view. Hence, our results can be understood as follows:
\begin{itemize}
    \item When $\psi$ and $\tau$ are both even, we have established a comparison between $\kappa^*(f_{\beta})$ and $c(\tau)$.
    \item When $\psi$ is odd and $\tau$ is even, we have established that $\kappa(f_{\beta})$ vanishes, and we have defined a natural refinement of it, $\kappa(f_{\beta})_2$, which is conjecturally connected with $c(\psi)$.
    \item When $\psi$ is even and $\tau$ is odd, $\kappa(f_{\beta})$ is conjecturally linked to $c(\tau)$, but the relation depends on Conjecture \ref{conj:fact}.
    \item When $\psi$ is odd and $\tau$ is even, the two previous ideas must combine to give a formula for $\kappa(f_{\beta})_2$ in terms of $c(\psi)$.
\end{itemize}

A final comment is that the study of the odd case requires an alternative choice of twists: we cannot work with a triple of weights $(r+2,\ell,\ell)$ since the sum is not even, but this does not affect this discussion on the vanishing of the corresponding $p$-adic $L$-functions.

\begin{remark}
The trivial vanishing of $\kappa(f_{\beta})$ only depends on $r$ and $\tau$ (and not on $\psi$). However, the existence of a factorization formula as in \cite{bellaichedasgupta15} depends on $r$ and $\psi$ (and not on $\tau$).
\end{remark}

\begin{remark}
It remains to consider the case where $\tau$ is odd and $\tau(p)=1$. From the point of view of Selmer groups, the exceptional vanishing of the $p$-adic $L$-function $L_p(\tau,s)$  when $\tau(p)=1$ corresponds to the existence of an {\it extended Selmer group}, which in this case is just a $p$-unit. For simplicity, consider the case of $r=j=0$, and write $\kappa^*(f_{\beta})(1)$ for the specialization of the Iwasawa class $\kappa^*(f_{\beta})$ corresponding to $H^1(\QQ, \QQ_p(\tau^{-1})(1))$. As a piece of notation, let $H$ stand for the number field cut out by $\tau$ and let $\mathcal O_H^{\times}[1/p][\tau]$ denote the space of $p$-units in $H$ where the Galois action is via $\tau$. This space has a canonical generator, that we denote by $v_{\tau}$. This suggests the following question regarding how to interpret the possible torsion of Iwasawa cohomology.

\begin{question}
Can we have an explicit comparison between $\kappa^*(f_{\beta})(1)$ and the $p$-unit $v_{\tau} \in \mathcal O_H^{\times}[1/p][\tau]$.
\end{question}
\end{remark}

\section{Extensions of this work: Beilinson--Flach classes and the triple product case}

Along this note, we have shown how the comparison of Beilinson--Kato classes and circular units differs from the settings that appear in the previous work of Loeffler and the second author. We want to close the article with a short discussion of the differences between both methods, emphasizing the connections between both theories. For the main definitions and properties regarding Beilinson--Flach elements, we refer to \cite{KLZ17} and \cite{KLZ20}.

Let $\hf$ be a Coleman family passing through a critical Eisenstein series and let $\hg$ be a (ordinary) Hida family. When studying the comparison between Beilinson--Flach classes and Beilinson--Kato classes in \cite{LR1} we had used the Hida--Rankin $p$-adic $L$-function $L_p^g(\hf,\hg)$, and not the one corresponding to the situation where the critical Eisenstein family is dominant, that we would denote by $L_p^f(\hf,\hg)$. In the framework explored in this article, the only option was precisely considering that situation, since we just had one family. Hence, it is natural to extend these ideas to the Hida--Rankin setting and ask the following question (which also admits an analogue in the setting of triple products).

\begin{question}\label{q:fact}
Is there any factorization formula for $L_p^f(\hf,\hg)$ or its derivative along the weight-$k$ direction, similar to \cite{bellaichedasgupta15}?
\end{question}

Observe that the answer to this question is straightforward if we work instead with $L_p^g(\hf,\hg)$: applying the Artin formalism and specializing $\hf$ at $f = E_{r+2}(\psi,\tau)$, \[ L_p^g(E_{r+2},\hg)(\ell,s) = L_p(\hg,\psi)(\ell,s) \cdot L_p(\hg, \tau)(\ell,s-r-1). \] In the case of $L_p^f(E_{r+2},\hg)$, we expect to get some logarithmic contribution, as in \cite{bellaichedasgupta15}, together with a logaritmic term. The same question arises in the setting of triple products, where one would expect a similar situation for the triple product $p$-adic $L$-function of Andreatta--Iovita \cite{andreattaiovita21} when the dominant family passes through a critial Eisenstein point.

Moreover, in \cite{LR1} we had established that the projection of the Beilinson--Flach class to the quotient $\mathcal F^- V({\hf})^* \otimes V_{\hg}^*$ vanishes. This is because the Bloch--Kato conjecture holds for that representation and the projection of the Beilinson--Flach element satisfies a local condition at $p$ which is {\it too weak}.

Therefore, one may wonder if the specialization of the three-variable $p$-adic $L$-function $L_p^f(\hf,\hg)$ at $f_{\beta}$ is zero. A priori, it is not correct to say that the $p$-adic $L$-function is the image under the Perrin-Riou map of a trivial projection of the Beilinson--Flach class, since in the reciprocity law we may have a zero in the term $d_{\hf}(\hk)$ which arises in the leading term argument and one needs to be more cautious. Extending the techniques we have developed in this work, we are able to get a satisfactory answer of \ref{q:fact} in the second part of this paper, and also to understand better this issue.



\begin{remark}
This is similar to the kind of questions we had explored in \cite{rivero3}, where we also suggested that this kind of $p$-adic $L$-function may encode relevant information regarding the Galois representations. 
\end{remark}

Another interesting question, that we believe could be related with this discussion, is the study of the situation where two or more families pass through a critical Eisenstein series. These questions are also considered in the sequel to this work.

\begin{appendix}
    \section{Rank-2 big Galois representations}\label{section: big Galois representations}

\begin{center}
    by Ju-Feng Wu\footnote{J.-F.W. was supported by Taighde \'{E}ireann -- Research Ireland under Grant number IRCLA/2023/849 (HighCritical).}
\end{center}

The purpose of this appendix is to construct the big Galois representations $V(\mathbf{f})$ and $V^c(\mathbf{f})$ used in the main body of the paper. This is presumably well-known to the experts, but we have not been able to find a reference documenting such a construction in details.

Since the construction of these two big Galois representations requires notations in the construction of the cuspidal eigencurve, for the convenience of the reader, we will begin by reviewing the construction of $\mathcal{C}^0$ (Sect. \ref{subsection: oc cohomology} and \ref{subsection: cusp. eigencurve}). 
The construction of the two big Galois representations $V(\mathbf{f})$, $V^c(\mathbf{f})$ will then be presented afterwards (Sect. \ref{subsection: big Gal reps}). 
We should also remark that the constructions of $V(\mathbf{f})^*$ and $V^c(\mathbf{f})^*$ are similar, so we leave them to the reader.

\subsection{Overconvergent cohomology groups}\label{subsection: oc cohomology}
Recall the $p$-adic weight space $\mathcal{W}$, which is the rigid analytic space associated with the formal scheme $\mathrm{Spf} \, \Zp[\![\ZZ_p^\times]\!]$. Note that the algebra $\Zp[\![\ZZ_p^\times]\!]$ represents the functor (cf. \cite[Sect. 2.1]{CHJ-2017}) \[
    \mathsf{Alg}_{(\Zp, \Zp)} \rightarrow \mathsf{Sets}, \quad (R, R^+)\mapsto \Hom_{\mathsf{Groups}}^{\mathrm{cts}}(\ZZ_p^\times, R^\times),
\] 
from the category of complete sheafy $(\Zp, \Zp)$-algebras to the category of sets. We follow the ideas in \cite{CHJ-2017} and consider the following notions of \emph{weights}:

\begin{definition}\label{def: notions of weights}
    \begin{enumerate}
        \item A \emph{small $\Zp$-algebra} is a $\Zp$-algebra $R$ which is reduced, $p$-torsion-free, and finite as a $\Zp[\![X_1, ..., X_n]\!]$-algebra (for some $n\geq 0$). 
        \item A \emph{small weight} is a pair $(R_V, \lambda_V)$, where $R_V$ is a small $\Zp$-algebra and $\lambda_{V}: \ZZ_p^\times \rightarrow R_V^\times$ is a continuous group homomorphism such that $\lambda_V(1+p)-1$ is a topological nilpotent in $R_V$ with respect to the $p$-adic topology. 
        \item An \emph{affinoid weight} is a pair $(R_V, \lambda_{V})$, where $R_V$ is a reduced affinoid algebra (\`a la Tate) and $\lambda_V : \ZZ_p^\times \rightarrow R_V^\times$ is a continuous group homomorphism. 
        \item A \emph{weight} is either a small weight or an affinoid weight. 
        \item An \emph{open weight} is a weight such that the induced morphism of rigid analytic spaces \[
            V \coloneq \mathrm{Spa}(R_V, R_V^{\circ})^{\mathrm{rig}} \rightarrow \mathcal{W}
        \]
        is an open immersion. Here, $A_V^{\circ}$ is the ring of power-bounded elements in $R_V$ and `$\bullet^{\mathrm{rig}}$' stands for `taking the rigid generic fiber'.
    \end{enumerate}
\end{definition}

\begin{remark}\label{remark: some remarks on the notion of weights}
    \begin{enumerate}
        \item Let $(R_V, \lambda_V)$ be a weight. \begin{itemize}
            \item If $(R_V, \lambda_{V})$ is a small weight, then $R_V^{\circ} = R_V$. 
            \item If $(R_V, \lambda_V)$ is an affinoid weight, then $V = \mathrm{Spa}(R_V, R_V^{\circ})$.
        \end{itemize}
        \item Given a weight $(R_V, \lambda_V)$, $R_V[1/p]$ admits a structure of uniform $\QQ_p$-algebra by letting $R_V^{\circ}$ be its unit ball and equipping it with the corresponding spectral norm, denoted by $|\cdot|_{V}$. Following \cite[pp. 202]{CHJ-2017}, we then define \[
            s_V \coloneq \min\left\{ s\in \ZZ_{\geq 0} : |\lambda_{V}(1+p)|_{V} < p^{-\frac{1}{p^{s}(p-1)}}\right\}.
        \]
        \item Any $k\in \ZZ$ defines a small (resp., an affinoid) weight \[
            \ZZ_p^\times \xrightarrow{a\mapsto a^k} \ZZ_p^\times \quad (\text{resp., } \ZZ_p^\times \xrightarrow{a\mapsto a^k}\QQ_p^\times).
        \]
        In the later context, we shall switch between these two perspectives without mentioning. 
    \end{enumerate}
\end{remark}

Next, we need to introduce certain modules of analytic functions and distributions, which will be the coefficient system of the cohomology we are considering. We start by recalling some fundamental spaces in $p$-adic analysis: For any $s\in \QQ_{>0}$, denote by $C^{s}(\ZZ_p, \ZZ_p)$ the space of $s$-analytic functions on $\ZZ_p$ valued in $\ZZ_p$. By Amice's transform, we have the following isomorphism (cf. \cite[Chapter III, 1.3.8]{lazard65}) \[
    C^s(\ZZ_p, \ZZ_p) \cong \widehat{\bigoplus}_{i\in \ZZ_{\geq 0}} \ZZ_p e^{(s)}_i,
\]
where \[
    e^{(s)}_i: \ZZ_p \rightarrow \ZZ_p, \quad x \mapsto \lfloor p^{-s} i\rfloor! \begin{pmatrix}x\\i \end{pmatrix}.
\]

To define the modules of analytic functions and distributions, consider the following $p$-adic Lie group \[
    \mathrm{Iw}_{\GL_2} \coloneq \left\{ \gamma = \begin{pmatrix}a & b\\c & d\end{pmatrix} \in \GL_2(\ZZ_p): p \mid c\right\}.
\]

This is a Iwahori subgroup of $\GL_2(\ZZ_p)$ with Iwahori decomposition \[
    \mathrm{Iw}_{\GL_2} = N_1^{\mathrm{opp}} B(\ZZ_p),
\]
where $B$ is the upper-triangular Borel in $\GL_2$ and \[
    N_1^{\mathrm{opp}} = \left\{\begin{pmatrix} 1 & \\ c & 1\end{pmatrix} : c\in p\ZZ_p \right\}.
\]
Observe that \begin{equation}\label{eq: iso for p-adic manifolds}
    N_1^{\mathrm{opp}} \cong \ZZ_p, \quad \begin{pmatrix} 1 & \\ c & 1\end{pmatrix} \mapsto cp^{-1}
\end{equation}
as $p$-adic manifolds. 

For any weight $(R_{V}, \lambda_V)$ and $s\geq 1+s_V$, we define \begin{align*}
    A_{\lambda_V}^{s, \circ} & \coloneq \left\{ \phi: \mathrm{Iw}_{\GL_2}\xrightarrow{\text{continuous}} R_V: \begin{array}{l}
        \phi(\gamma\beta) = \lambda_V(\beta)\phi(\gamma) \text{ for all } (\gamma, \beta)\in \mathrm{Iw}_{\GL_2}\times B(\ZZ_p)  \\
         \phi|_{N_1^{\mathrm{opp}}} \in C^s(\ZZ_p, \ZZ_p)\widehat{\otimes}_{\ZZ_p}R_V^{\circ} \text{ via \eqref{eq: iso for p-adic manifolds}}
    \end{array} \right\}, \\
    A_{\lambda_V}^s & \coloneq A_{\lambda_V}^{s, \circ}[1/p], \\
    D_{\lambda_V}^{s, \circ} & \coloneq \Hom_{R_V^{\circ}}^{\mathrm{cts}}(A_{\lambda_V}^{s, \circ}, R_V^{\circ}), \\
    D_{\lambda_V}^{s} & \coloneq D_{\lambda_V}^{s, \circ}[1/p]. 
\end{align*}
Thanks to the structure theorem for $C^s(\ZZ_p, \ZZ_p)$, one easily sees that \begin{equation}\label{eq: analytic module structure}
    A_{\lambda_V}^{s, \circ} \cong \widehat{\bigoplus}_{i\in \ZZ_{\geq 0}} R_V^{\circ} e_i^{(s)} \quad  \text{ and } \quad D_{\lambda_V}^{s, \circ} \cong \prod_{i\in \ZZ_{\geq 0}} R_V^{\circ} e_i^{(s), \vee}
\end{equation}
as topological $R_V^{\circ}$-modules, where $e_i^{(s), \vee}$ is the dual vector to $e_i^{(s)}$. Moreover, the left multiplication of $\mathrm{Iw}_{\GL_2}$ induces a right $\mathrm{Iw}_{\GL_2}$-action on $A_{\lambda_V}^{s, \circ}$ and $A_{\lambda_V}^s$, which then further induces a left $\mathrm{Iw}_{\GL_2}$-action on $D_{\lambda_V}^{s, \circ}$ and $D_{\lambda_V}^s$.

If $(R_V, \lambda_V)$ is a small weight, we follow the idea in \cite{Hansen-Iwasawa} to give $D_{\lambda_V}^{s, \circ}$ a filtration. Let $I_V$ be an ideal of definition of $R_V$ and (again) assume $p\in I_V$. From the construction, the natural map $A_{\lambda_V}^{r-1, \circ} \rightarrow A_{\lambda_V}^{s, \circ}$ induces the dual map $D_{\lambda_V}^{s, \circ} \rightarrow D_{\lambda_V}^{s-1, \circ}$. We put \[
    \Fil^j \coloneq \ker \left( D_{\lambda_V}^{s, \circ} \rightarrow D_{\lambda_V}^{s-1, \circ}/I_V^j D_{\lambda_V}^{s-1, \circ} \right). 
\]
By carefully analyzing the structure theorem (cf. \cite{Hansen-Iwasawa}), one can show that each quotient \[
    D_{\lambda_V, j}^{s, \circ} \coloneq D_{\lambda_V}^{s, \circ}/\Fil^j
\]
is a finite abelian group and \[
    D_{\lambda_V}^{s, \circ} \cong \varprojlim_j D_{\lambda_V, j}^{s, \circ}
\]
as topological $R_V$-modules. Moreover, $\Fil^j$ is stable under the left $\mathrm{Iw}_{\GL_2}$-action and so $D_{\lambda_V, j}^{s, \circ}$ admits the induced left $\mathrm{Iw}_{\GL_2}$-action. 

Finally, we define the overconvergent cohomology groups in our consideration. Let $Y$ be the algebraic modular curve (over $\QQ$) of level $\Gamma = \Gamma_1(N) \cap \Gamma_0(p)$, where $p\nmid N$ is chosen as in the main body of the paper. We write $\overline{Y} = Y\times_{\mathrm{Spec}\, \QQ}\mathrm{Spec}\, \overline{\QQ}$. By fixing a geometric point $\overline{y}$ of $Y$, the finite abelian groups $D_{\lambda_V, j}^{s, \circ}$ (for some small weight $(R_V, \lambda_V)$ and some $s\geq 1+s_V$) define {\'e}tale local systems via the projection \[
    \pi^{\mathrm{{\acute{e}t}}}_1(\overline{Y}, \overline{y}) \twoheadrightarrow \mathrm{Iw}_{\GL_2}.\footnote{ More precisely, denote by $\overline{Y}_{\Gamma(p^n)}$ the modular curve of level $\Gamma_1(N)\cap \Gamma_0(p)$ over $\overline{\QQ}$, then the tower $\cdots \rightarrow \overline{Y}_{\Gamma(p^n)} \rightarrow \cdots \rightarrow \overline{Y}_{\Gamma(p)} \rightarrow \overline{Y}$ is a pro-{\'e}tale tower with Galois group $\mathrm{Iw}_{\GL_2}$.}
\] 
We denote by the resulting {\'e}tale local system by $\mathscr{D}_{\lambda_V, j}^{s, \circ}$. Then, we define \[
    H_{\mathrm{\acute{e}t}}^1(\overline{Y}, \mathscr{D}_{\lambda_V}^{s, \circ}) \coloneq \varprojlim_j H_{\mathrm{\acute{e}t}}^1(\overline{Y}, \mathscr{D}_{\lambda_V, j}^{s, \circ})\quad \text{ and }\quad H_{\mathrm{\acute{e}t}}^1(\overline{Y}, \mathscr{D}_{\lambda_V}^{s}) \coloneq H_{\mathrm{\acute{e}t}}^1(\overline{Y}, \mathscr{D}_{\lambda_V}^{s, \circ})[1/p].
\]

On the other hand, for any weight $(R_V, \lambda_V)$ and any $s\geq 1+s_V$, the left $\mathrm{Iw}_{\GL_2}$-action on $D_{\lambda_V, j}^{s, \circ}$, $D_{\lambda_V}^{s, \circ}$ and $D_{\kappa_V}^{s}$ define local systems on the complex manifold $Y(\CC)$ (\cite[Sect. 2]{ashstevens-arithcoh}), which are still denoted by $D_{\lambda_V, j}^{s, \circ}$, $D_{\lambda_V}^{s, \circ}$ and $D_{\lambda_V}^{s}$. Therefore, we can consider the Betti cohomology $H^1(Y(\CC), -)$ with these coefficients.

\begin{proposition}\label{prop: comparison of cohomology groups}
    Let $(R_V, \lambda_V)$ be a small weight and let $s\geq 1+s_V$. We have canonical isomorphisms \[
        H^1(Y(\CC), D_{\lambda_V}^s) \cong H_{\mathrm{\acute{e}t}}^1(\overline{Y}, \mathscr{D}_{\lambda_V}^{s}).
    \]
\end{proposition}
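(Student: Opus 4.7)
The plan is to reduce the statement to the classical {\'e}tale-Betti comparison at finite level, pass to an inverse limit, and finally invert $p$. Since both sides of the desired isomorphism are obtained by inverting $p$ on integral versions, the task first reduces to comparing $H^1(Y(\CC), D_{\lambda_V}^{s, \circ})$ with $H^1_{\mathrm{\acute{e}t}}(\overline{Y}, \mathscr{D}_{\lambda_V}^{s, \circ}) = \varprojlim_j H^1_{\mathrm{\acute{e}t}}(\overline{Y}, \mathscr{D}_{\lambda_V, j}^{s, \circ})$.

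First I would fix an embedding $\overline{\QQ} \hookrightarrow \CC$, which makes available the comparison map between topological and {\'e}tale fundamental groups, and verify that for every $j$ the continuous $\pi_1^{\mathrm{\acute{e}t}}(\overline{Y}, \overline{y})$-representation $D_{\lambda_V, j}^{s, \circ}$ (which factors through $\mathrm{Iw}_{\GL_2}$ by the very construction of the pro-{\'e}tale tower $\{\overline{Y}_{\Gamma(p^n)}\}_n$) pulls back, via $\pi_1^{\mathrm{top}}(Y(\CC), y) \to \pi_1^{\mathrm{\acute{e}t}}(\overline{Y}, \overline{y})$, to the Betti local system of the same name on $Y(\CC)$. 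Granted this compatibility, Artin's comparison theorem applied to the smooth variety $Y$ with finite coefficients, together with smooth base change between $\overline{Y}$ and $Y_{\CC}$, produces canonical isomorphisms
$$H^i_{\mathrm{\acute{e}t}}(\overline{Y}, \mathscr{D}_{\lambda_V, j}^{s, \circ}) \cong H^i(Y(\CC), D_{\lambda_V, j}^{s, \circ})$$
for every $i$ and every $j$, functorial in $j$.

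Next I would pass to the inverse limit over $j$. The {\'e}tale side gives $H^1_{\mathrm{\acute{e}t}}(\overline{Y}, \mathscr{D}_{\lambda_V}^{s, \circ})$ by definition. On the Betti side I would invoke the Milnor exact sequence
$$0 \longrightarrow {\varprojlim_j}^1 H^0(Y(\CC), D_{\lambda_V, j}^{s, \circ}) \longrightarrow H^1\bigl(Y(\CC), \varprojlim_j D_{\lambda_V, j}^{s, \circ}\bigr) \longrightarrow \varprojlim_j H^1(Y(\CC), D_{\lambda_V, j}^{s, \circ}) \longrightarrow 0,$$
which applies because $Y(\CC)$ has the homotopy type of a finite $1$-dimensional CW complex. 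The transition maps $D_{\lambda_V, j+1}^{s, \circ} \twoheadrightarrow D_{\lambda_V, j}^{s, \circ}$ are surjective by construction, so the inverse system $\{H^0(Y(\CC), D_{\lambda_V, j}^{s, \circ})\}_j$ satisfies Mittag-Leffler, the $\varprojlim^1$ term vanishes, and $\varprojlim_j$ commutes with $H^1$. Inverting $p$ then concludes the proof.

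The main obstacle I expect lies in the bookkeeping between the {\'e}tale and topological pictures of the local system: one must verify that the Iwahori quotient of $\pi_1^{\mathrm{\acute{e}t}}(\overline{Y}, \overline{y})$ arising from the pro-{\'e}tale tower $\{\overline{Y}_{\Gamma(p^n)}\}_n$ matches, under profinite completion, the topological Iwahori quotient of $\pi_1^{\mathrm{top}}(Y(\CC), y)$ used to define the Betti local system. This is expected to follow from the universal property of the {\'e}tale fundamental group and the Riemann existence theorem comparing complex-analytic and algebraic finite {\'e}tale covers of $Y$, but it is the only step where compatible choices genuinely must be made; once this is settled, the remainder of the argument is a direct application of the classical Artin comparison together with a standard Mittag-Leffler argument.
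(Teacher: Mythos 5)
Your proof is correct and follows essentially the same route as the paper: reduce to the integral statement, apply Artin's comparison theorem at each finite level $j$, commute $H^1$ with $\varprojlim_j$ via a Mittag--Leffler/$\varprojlim^1$ argument, and then invert $p$. One tiny point: surjectivity of the transition maps $D_{\lambda_V, j+1}^{s, \circ} \twoheadrightarrow D_{\lambda_V, j}^{s, \circ}$ does not by itself give surjectivity of the induced maps on $H^0$ (invariants), but since each $D_{\lambda_V, j}^{s, \circ}$ is a finite abelian group the system $\{H^0(Y(\CC), D_{\lambda_V, j}^{s, \circ})\}_j$ consists of finite groups, so Mittag--Leffler holds automatically and the $\varprojlim^1$ term vanishes as you claim.
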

\begin{proof}
    Observe that \[
        H^1(Y(\CC), D_{\lambda_V}^s) = H^1(Y(\CC), D_{\lambda_V}^{s, \circ})[1/p].
    \]
    Hence, it is enough to show the existence of the canonical isomorphism \[
        H^1(Y(\CC), D_{\lambda_V}^{s, \circ}) \cong H^1_{\mathrm{\acute{e}t}}(\overline{Y}, \mathscr{D}_{\lambda_V}^{s, \circ}). 
    \]
    By Artin comparison (\cite[Theorem 2]{Artin-etale}), we have canonical isomorphisms \[
        H^1(Y(\CC), D_{\kappa_V, j}^{s, \circ}) \cong H^1_{\mathrm{\acute{e}t}}(\overline{Y}, \mathscr{D}_{\kappa_V, j}^{s, \circ}).
    \] 
    Hence, we only need to show the natural map \[
        H^1(Y(\CC), D_{\lambda_V}^{s, \circ}) \rightarrow \varprojlim_j H^1(Y(\CC), D_{\lambda_V, j}^{s, \circ})
    \]
    is an isomorphism. This follows from the fact that the inverse system $\{D_{\lambda_V, j}^{s, \circ}\}_j$ is Mittag-Leffler. 
\end{proof}

\begin{definition}\label{def: etale cohomology for affinoid weights}
    Suppose $(R_V, \lambda_V)$ is an affinoid weight such that there exists a small weight $(R_{V'}, \kappa_{V'})$ with an open immersion $V \hookrightarrow V'$. Let $s\geq 1+s_{V'}$. We define \[
        H^1_{\mathrm{\acute{e}t}}(\overline{Y}, \mathscr{D}_{\lambda_V}^s) \coloneq \left( \varprojlim_j H_{\mathrm{\acute{e}t}}^1(\overline{Y}, \mathscr{D}_{\lambda_{V'}, j}^{s, \circ}) \otimes_{\ZZ_p} R_{V}^{\circ}\right)[1/p].
    \]
\end{definition}

\begin{corollary}\label{cor: comparison of cohomology groups}
    Keep the notations as in Definition \ref{def: etale cohomology for affinoid weights}. Then, there exists a canonical isomorphism \[
        H^1(Y(\CC), D_{\lambda_V}^s) \cong H_{\mathrm{\acute{e}t}}^1(\overline{Y}, \mathscr{D}_{\lambda_V}^{s}).
    \]
    Consequently, Definition \ref{def: etale cohomology for affinoid weights} is independent to the choice of the small weight $(R_{V'}, \lambda_{V'})$. 
\end{corollary}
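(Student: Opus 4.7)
The plan is to combine Proposition~\ref{prop: comparison of cohomology groups}, applied to the ambient small weight $(R_{V'}, \lambda_{V'})$, with a base-change argument along the open immersion $V \hookrightarrow V'$. For $s \geq 1 + s_{V'}$, that proposition yields
\[
H^1(Y(\CC), D_{\lambda_{V'}}^{s, \circ}) \;\cong\; \varprojlim_j H^1_{\mathrm{\acute{e}t}}(\overline{Y}, \mathscr{D}_{\lambda_{V'}, j}^{s, \circ}).
\]
The idea is then to tensor both sides with $R_V^{\circ}$ and invert $p$; by Definition~\ref{def: etale cohomology for affinoid weights}, the right-hand side becomes literally $H^1_{\mathrm{\acute{e}t}}(\overline{Y}, \mathscr{D}_{\lambda_V}^s)$, so the content of the corollary reduces to identifying the resulting left-hand side with $H^1(Y(\CC), D_{\lambda_V}^s)$.

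The crucial input is a base-change identification of coefficients. Using the structure isomorphisms in \eqref{eq: analytic module structure}, $D_{\lambda_V}^{s, \circ} = \prod_{i \geq 0} R_V^{\circ} e_i^{(s), \vee}$ and $D_{\lambda_{V'}}^{s, \circ} = \prod_{i \geq 0} R_{V'}^{\circ} e_i^{(s), \vee}$, and since $\lambda_V$ is the restriction of $\lambda_{V'}$ through $V \hookrightarrow V'$, the $\mathrm{Iw}_{\GL_2}$-action on each factor is governed by scalars in $R_{V'}^{\circ}$. I would verify directly that this produces a canonical $\mathrm{Iw}_{\GL_2}$-equivariant identification
\[
D_{\lambda_V}^{s, \circ} \;\cong\; D_{\lambda_{V'}}^{s, \circ} \,\widehat{\otimes}_{R_{V'}^{\circ}}\, R_V^{\circ},
\]
compatibly with the filtrations $\{\Fil^j\}_j$ used to define $D_{\lambda_{V'}, j}^{s,\circ}$. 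Computing Betti cohomology through a finite free resolution coming from a triangulation of $Y(\CC)$, this coefficient identification descends to $H^1$: at each finite level $j$, $H^1(Y(\CC), D_{\lambda_{V'}, j}^{s, \circ})$ is finitely presented over the quotient of $R_{V'}^{\circ}$ cutting out $D_{\lambda_{V'}, j}^{s,\circ}$, where base change is exact. Passing to the limit by the same Mittag--Leffler argument used in the proof of Proposition~\ref{prop: comparison of cohomology groups}, and inverting $p$, one recovers $H^1(Y(\CC), D_{\lambda_V}^s)$.

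The independence statement is then automatic: the Betti side $H^1(Y(\CC), D_{\lambda_V}^s)$ depends only on the pair $(R_V, \lambda_V)$, and the canonical isomorphism established above transports this intrinsic character to the étale side. Concretely, if $(R_{V''}, \lambda_{V''})$ is a second small weight with $V \hookrightarrow V''$, one can compare the two resulting étale groups either by passing to a common refinement (take a small weight that opens around $V$ inside $V' \cap V''$) or simply by transiting through the Betti description.

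The step I expect to require the most care is the commutation of $\varprojlim_j$ with the tensor product $\otimes_{\ZZ_p} R_V^{\circ}$ appearing in Definition~\ref{def: etale cohomology for affinoid weights}. The $\ZZ_p$-tensor product must be reinterpreted as an $R_{V'}^{\circ}$-linear completed tensor (factoring through the natural $R_{V'}^{\circ}$-module structure on each cohomology level), and the compatibility of this reinterpretation with the inverse limit is where the Mittag--Leffler property of $\{D_{\lambda_{V'}, j}^{s, \circ}\}_j$ does the real work. Once this technical bookkeeping is in place, the isomorphism and its naturality in $(R_{V'}, \lambda_{V'})$ both follow.
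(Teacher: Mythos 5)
Your proposal is correct and follows essentially the same route as the paper's proof: apply the finite-level comparison from Proposition \ref{prop: comparison of cohomology groups}, commute $\otimes_{\ZZ_p} R_V^{\circ}$ with cohomology (the paper uses flatness/Tor-vanishing where you use a finite free resolution from a triangulation), pass to the limit via the Mittag--Leffler property, and identify the limit of the base-changed finite-level coefficient modules with $D_{\lambda_V}^{s,\circ}$ via the structure isomorphism \eqref{eq: analytic module structure} (the paper invokes \cite[Proposition 6.4]{CHJ-2017} for this last step, which is exactly the reinterpretation of the $\ZZ_p$-tensor that you flag as the delicate point). The independence claim is likewise obtained by transporting the intrinsic Betti description, as in the paper.
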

\begin{proof}
    By Proposition \ref{prop: comparison of cohomology groups}, we have \[
        \varprojlim_j \left(H^1(Y(\CC), D_{\lambda_{V'}, j}^s)\otimes_{\ZZ_p}R_V^{\circ}\right) \cong \varprojlim_j \left( H_{\mathrm{\acute{e}t}}^1(\overline{Y}, \mathscr{D}_{\lambda_{V'}, j}^{s, \circ}) \otimes_{\ZZ_p} R_{V}^{\circ} \right).
    \]
    Since $R_{V}^{\circ}$ is flat over $\ZZ_p$, the higher Tor groups $\mathrm{Tor}_i^{\ZZ_p}(-, R_V^{\circ})$ vanishes (\cite[\href{https://stacks.math.columbia.edu/tag/00M5}{Tag 00M5}]{stacks-project}) and so \[
        H^1(Y(\CC), D_{\lambda_{V'}, j}^s)\otimes_{\ZZ_p}R_V^{\circ} = H^1(Y(\CC), D_{\lambda_{V'}, j}^s \otimes_{\ZZ_p}R_V^{\circ}).
    \] 
    Observe also that $\{D_{\lambda_{V'}, j}^s \otimes_{\ZZ_p}R_V^{\circ}\}_{j}$ is a Mittag-Leffler projective system. Therefore, we arrive at the isomorphism \[
        H^1(Y(\CC), \varprojlim_j D_{\lambda_{V'}, j}^s \otimes_{\ZZ_p}R_V^{\circ}) \cong \varprojlim_j \left( H_{\mathrm{\acute{e}t}}^1(\overline{Y}, \mathscr{D}_{\lambda_{V'}, j}^{s, \circ}) \otimes_{\ZZ_p} R_{V}^{\circ} \right).
    \]
    To conclude the proof, it remains to show that \[
        D_{\lambda_V}^{s, \circ} \cong \varprojlim_{j} D_{\lambda_{V'}, j}^{s, \circ}\otimes_{\ZZ_p} R_{V}^{\circ}.
    \]
    Indeed, we have \[
        D_{\lambda_V}^{s, \circ} \cong \prod_{i\in \ZZ_{\geq 0}} R_V^{\circ} e_i^{(s), \vee} \cong \varprojlim_{j} D_{\lambda_{V'}, j}^{s, \circ}\otimes_{\ZZ_p} R_V^{\circ},
    \]
    where the first isomorphism follows from \eqref{eq: analytic module structure} and the last isomorphism follows from \cite[Proposition 6.4]{CHJ-2017}.
\end{proof}

\begin{remark}\label{remark: on comparison theorems}
    \begin{enumerate}
        \item Proposition \ref{prop: comparison of cohomology groups} and Corollary \ref{cor: comparison of cohomology groups} then allow us to equip a Galois action on $H^1(Y(\CC), D_{\lambda_V}^{s})$. In what follows, we shall often use these comparisons to jump between Betti cohomology and {\'e}tale cohomology. 
        \item The discussions above also apply to compactly supported cohomology. 
    \end{enumerate}
\end{remark}

\subsection{The cuspidal eigencurve and {\'e}tale points over the weight map}\label{subsection: cusp. eigencurve}
To construct the cuspidal eigencurve, let $(R_V, \lambda_V)$ be an affinoid weight, $s\geq 1+s_V$, and consider the natural morphism \begin{equation}\label{eq: Hc1-->H1}
    H_c^1(Y(\CC), D_{\lambda_V}^s) \rightarrow H^1(Y(\CC), D_{\lambda_V}^s).
\end{equation}
Following \cite{eigenbook}, define \begin{align*}
    H_{\mathrm{par}}^1(Y(\CC), D_{\lambda_V}^s) & \coloneqq \mathrm{image}\left( H_c^1(Y(\CC), D_{\lambda_V}^s) \rightarrow H^1(Y(\CC), D_{\lambda_V}^s) \right),\\
    H_{\mathrm{Eis}}^1(Y(\CC), D_{\kappa_V}^s) & \coloneqq \ker\left( H_c^1(Y(\CC), D_{\lambda_V}^s) \rightarrow H^1(Y(\CC), D_{\lambda_V}^s) \right). 
\end{align*}

Suppose $h\in \QQ_{\geq 0}$ such that $(U(p), V, h)$ is a slope datum (in the sense of \cite[Sect. 3.1]{hansen14}). Then, by \cite[Proposition 3.1.5]{hansen14}, $H_c^1(Y(\CC), D_{\lambda_V}^s)$ and $H^1(Y(\CC), D_{\lambda_V}^s)$ have slope-$\leq h$ decomposition and are independent to $s$. Since \eqref{eq: Hc1-->H1} is Hecke equivariant, one deduces that both  $H_{\mathrm{par}}^1(Y(\CC), D_{\lambda_V}^s)$ and $H_{\mathrm{Eis}}^1(Y(\CC), D_{\lambda_V}^s)$ also have slope-$\leq h$ decomposition. For $?\in \{\emptyset, c, \mathrm{par}, \mathrm{Eis}\}$, we define \[
    \mathbb{T}_{?, V}^{\leq h} \coloneq \text{ $R_V$-algebra in $\mathrm{End}_{R_V}(H_?^1(Y(\CC), D_{\lambda_V}^s)^{\leq h})$ generated by the Hecke algebra}.
\]
Since $H_?^1(Y(\CC), D_{\lambda_V}^s)^{\leq h}$ is a finite $R_V$-module, $\mathbb{T}_{?, V}^{\leq h}$ is a finite algebra over $R_V$, hence an affinoid algebra (\`a la Tate).

The \emph{cuspidal eigencurve} $\mathcal{C}^0$ is then constructing by gluing $\{\mathrm{Spa}(\mathbb{T}_{\mathrm{par}, V}^{\leq h}, \mathbb{T}_{\mathrm{par}, V}^{\leq h, \circ})\}_{(V, h)}$ over $\mathcal{W}$ (see \cite[Sect. 4.3]{hansen14} for details), running through the pair $(V, h)$ such that $V$ is an open affinoid weight and $(U(p), V, h)$ is a slope datum. The natural map \[
    \mathrm{wt}: \mathcal{C}^0 \rightarrow \mathcal{W}
\]
is the so-called \emph{weight map}.

In what follows, we study the modules $H_?^1(Y(\CC), D_{\lambda_V}^s)^{\leq h}$ locally around an {\'e}tale point $x\in \mathcal{C}^0(K)$ (for some finite extension $K$ of $\QQ_p$), which is the main interest in the main body of the paper. More precisely, suppose $x\in \mathcal{C}^0(K)$ such that $\mathrm{wt}$ is {\'e}tale at $x$. We write $\kappa = \mathrm{wt}(x)$. Let $V \subset \mathcal{W}$ be an open affinoid containing $\kappa$ and let $U$ be the connected component of $\mathrm{wt}^{-1}(U)$ containing $x$; and so $U$ corresponds to an idempotent $e_U: \mathscr{O}_{\mathcal{C}^0}(\mathrm{wt}^{-1}(V)) \twoheadrightarrow \mathscr{O}_{\mathcal{C}^0}(U)$. Since $\mathrm{wt}$ is {\'e}tale at $x$, after possibly shrinking $V$, we may assume \[
    \mathrm{wt}: U \rightarrow V
\]
is an isomorphism. Therefore, from the construction, we may then write \[
    \mathscr{O}_{\mathcal{C}^0}(U) = e_U \mathbb{T}_{\mathrm{par}, V}^{\leq h}
\]
for some $h\in \QQ_{\geq 0}$.

We construct the $R_V$-modules $e_U H^1(Y(\CC), D_{\lambda_V}^s)$ and $e_U H^1_c(Y(\CC), D_{\lambda_V}^s)$ as follows:

\paragraph{\textbf{Construction of $e_U H^1(Y(\CC), D_{\lambda_V}^s)^{\leq h}$.}} By definition, the map \eqref{eq: Hc1-->H1} yields a Hecke-equivariant inclusion \[
    H_{\mathrm{par}}^1(Y(\CC), D_{\lambda_V}^s)^{\leq h} \hookrightarrow H^1(Y(\CC), D_{\lambda_V}^s)^{\leq h}.
\]
Therefore, the restriction map gives the surjections \[
    \mathbb{T}_{V}^{\leq h} \twoheadrightarrow \mathbb{T}_{\mathrm{par}, V}^{\leq h} \twoheadrightarrow e_U \mathbb{T}_{\mathrm{par}, V}^{\leq h}.
\] We then define \[
    e_U H^1(Y(\CC), D_{\lambda_V}^s)^{\leq h} \coloneq H^1(Y(\CC), D_{\lambda_V}^s)^{\leq h} \otimes_{\mathbb{T}_{V}^{\leq h}} e_U \mathbb{T}_{\mathrm{par}, V}^{\leq h}.
\]

\paragraph{\textbf{Construction of $e_U H^1_c(Y(\CC), D_{\lambda_V}^s)^{\leq h}$.}} By definition, the map \eqref{eq: Hc1-->H1} yields a Hecke-equivariant surjection \[
    H^1_c(Y(\CC), D_{\lambda_V}^s)^{\leq h} \twoheadrightarrow H^1_{\mathrm{par}}(Y(\CC), D_{\lambda_V}^s)^{\leq h}. 
\]
Therefore, it gives rise to surjections \[
    \mathbb{T}_{c, V}^{\leq h} \twoheadrightarrow \mathbb{T}_{\mathrm{par}, V}^{\leq h} \twoheadrightarrow e_U \mathbb{T}_{\mathrm{par}, V}^{\leq h}.
\]
We then define \[
    e_U H^1_c(Y(\CC), D_{\lambda_V}^s)^{\leq h} \coloneq H^1(Y(\CC), D_{\lambda_V}^s)^{\leq h} \otimes_{\mathbb{T}_{c,V}^{\leq h}} e_U \mathbb{T}_{\mathrm{par}, V}^{\leq h}.
\]

\begin{proposition}\label{prop: incusion from eUH^1_c--> eUH^1}
    Keep the notations as above. The natural map \[
        e_U H^1_c(Y(\CC), D_{\lambda_V}^s)^{\leq h} \rightarrow e_U H^1(Y(\CC), D_{\lambda_V}^s)^{\leq h}
    \]
    is an injection. 
\end{proposition}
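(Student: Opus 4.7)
The natural Hecke-equivariant map $H^1_c^{\leq h}\to H^1^{\leq h}$ factors through the parabolic cohomology as
\[
H^1_c(Y(\CC),D_{\lambda_V}^s)^{\leq h}\twoheadrightarrow H^1_{\mathrm{par}}(Y(\CC),D_{\lambda_V}^s)^{\leq h}\hookrightarrow H^1(Y(\CC),D_{\lambda_V}^s)^{\leq h}.
\]
Applying the three tensor products $-\otimes e_U\mathbb{T}_{\mathrm{par},V}^{\leq h}$, taken respectively over $\mathbb{T}_{c,V}^{\leq h}$, $\mathbb{T}_{\mathrm{par},V}^{\leq h}$ and $\mathbb{T}_V^{\leq h}$, one obtains a factorization
\[
e_U H^1_c^{\leq h}\xrightarrow{\alpha} H^1_{\mathrm{par}}^{\leq h}\otimes_{\mathbb{T}_{\mathrm{par},V}^{\leq h}}e_U\mathbb{T}_{\mathrm{par},V}^{\leq h}\xrightarrow{\beta} e_U H^1^{\leq h}
\]
of the map appearing in the proposition; the middle term, which I denote $e_U H^1_{\mathrm{par}}^{\leq h}$, is simply the direct summand of $H^1_{\mathrm{par}}^{\leq h}$ cut out by the idempotent $e_U$. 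Note that the agreement between $H^1_{\mathrm{par}}^{\leq h}\otimes_{\mathbb{T}_{c,V}^{\leq h}}e_U\mathbb{T}_{\mathrm{par},V}^{\leq h}$ and $e_U H^1_{\mathrm{par}}^{\leq h}$ (and similarly for $H^1^{\leq h}$) is a formal consequence of the fact that the $\mathbb{T}_{c,V}^{\leq h}$- and $\mathbb{T}_V^{\leq h}$-actions on $H^1_{\mathrm{par}}^{\leq h}$ factor through $\mathbb{T}_{\mathrm{par},V}^{\leq h}$. My plan is to show that $\alpha$ is an isomorphism and $\beta$ is an injection.

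For $\alpha$, I apply $-\otimes_{\mathbb{T}_{c,V}^{\leq h}}e_U\mathbb{T}_{\mathrm{par},V}^{\leq h}$ to the short exact sequence
\[
0\to H^1_{\mathrm{Eis}}^{\leq h}\to H^1_c^{\leq h}\to H^1_{\mathrm{par}}^{\leq h}\to 0
\]
of $\mathbb{T}_{c,V}^{\leq h}$-modules. Right-exactness of tensor identifies $\alpha$ as a surjection whose kernel is the image of $H^1_{\mathrm{Eis}}^{\leq h}\otimes_{\mathbb{T}_{c,V}^{\leq h}}e_U\mathbb{T}_{\mathrm{par},V}^{\leq h}$. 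Since the $\mathbb{T}_{c,V}^{\leq h}$-action on $H^1_{\mathrm{Eis}}^{\leq h}$ factors through a quotient Hecke algebra $\mathbb{T}_{\mathrm{Eis},V}^{\leq h}$, the vanishing of the kernel reduces to showing
\[
\mathbb{T}_{\mathrm{Eis},V}^{\leq h}\otimes_{\mathbb{T}_{c,V}^{\leq h}}e_U\mathbb{T}_{\mathrm{par},V}^{\leq h}=0,
\]
i.e., the Eisenstein spectrum inside $\mathrm{Spec}\,\mathbb{T}_{c,V}^{\leq h}$ is disjoint from the image of the cuspidal component $U$. For $\beta$, the cokernel of $H^1_{\mathrm{par}}^{\leq h}\hookrightarrow H^1^{\leq h}$ is a boundary cohomology $H^1_\partial^{\leq h}$, likewise of Eisenstein type, and the analogous vanishing of $\mathrm{Tor}_1^{\mathbb{T}_V^{\leq h}}(H^1_\partial^{\leq h},e_U\mathbb{T}_{\mathrm{par},V}^{\leq h})$ preserves injectivity of $\beta$.

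\textbf{Main obstacle.} The proof ultimately rests on the disjointness, in the appropriate Hecke spectra, of the boundary/Eisenstein eigensystems from the cuspidal component $U$ cut out by $e_U$. For $x\in\mathcal{C}^0$ coming from a genuinely cuspidal eigenform this is the classical separation of cuspidal and Eisenstein systems. The subtle case, and precisely the one motivating this appendix, is when $x$ corresponds to the critical Eisenstein point $f_\beta$, whose Hecke eigenvalues coincide with those of the classical Eisenstein series $E_{r+2}(\psi,\tau)$. The separation is then extracted from two inputs: the non-criticality assumption (Assumption \ref{ass-1}), which forces the generalized eigenspace at $f_\beta$ in overconvergent cohomology to be one-dimensional, and the étaleness of $\mathrm{wt}$ at $x$, which makes $e_U\mathbb{T}_{\mathrm{par},V}^{\leq h}\cong R_V$ a direct summand of $\mathbb{T}_{\mathrm{par},V}^{\leq h}$. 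Combined with the fact that the boundary/Eisenstein contributions at tame level $\Gamma_1(N)\cap\Gamma_0(p)$ are confined to systems distinct from the one picked out by $e_U$, these inputs force the relevant tensor and Tor-terms to vanish. Verifying this disjointness rigorously, while conceptually clear, is the central technical step of the proof.
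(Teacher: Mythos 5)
Your strategy coincides with the paper's: both arguments tensor the short exact sequence
\[
0\to H^1_{\mathrm{Eis}}(Y(\CC),D_{\lambda_V}^s)^{\leq h}\to H^1_{c}(Y(\CC),D_{\lambda_V}^s)^{\leq h}\to H^1_{\mathrm{par}}(Y(\CC),D_{\lambda_V}^s)^{\leq h}\to 0
\]
with $e_U\mathbb{T}_{\mathrm{par},V}^{\leq h}$ and reduce the proposition to the vanishing of $e_U H^1_{\mathrm{Eis}} = H^1_{\mathrm{Eis}}\otimes_{\mathbb{T}_{c,V}^{\leq h}}e_U\mathbb{T}_{\mathrm{par},V}^{\leq h}$. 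You are in fact more scrupulous than the paper on one point: you isolate the second arrow $\beta\colon e_U H^1_{\mathrm{par}}\to e_U H^1$ and observe that its injectivity (equivalently, exactness in the middle of the tensored three-term sequence) is not a formal consequence of right-exactness and requires a separate $\mathrm{Tor}_1$-vanishing for the boundary quotient $H^1/H^1_{\mathrm{par}}$; the paper simply writes down the sequence $e_U H^1_{\mathrm{Eis}}\to e_U H^1_c\to e_U H^1$ and treats it as exact, so your extra care here is a genuine refinement.

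The problem is that you never prove the step on which everything hinges. Your ``main obstacle'' paragraph correctly identifies what must be shown --- that the Eisenstein (resp.\ boundary) eigensystems in $\mathrm{Spec}\,\mathbb{T}_{c,V}^{\leq h}$ (resp.\ $\mathrm{Spec}\,\mathbb{T}_{V}^{\leq h}$) are disjoint from the component cut out by $e_U$, precisely in the delicate case where $x$ is the critical Eisenstein point --- and correctly names non-criticality and \'etaleness as the relevant inputs, but it then explicitly defers the verification. Since everything else in both your argument and the paper's is formal right-exactness of the tensor product, this disjointness \emph{is} the content of the proposition, and leaving it as ``conceptually clear'' leaves the proof incomplete. (To be fair, the paper's own treatment of this point is also very brief: it deduces $e_U H^1_{\mathrm{Eis}}=0$ in one line from the identification $H^1_c/H^1_{\mathrm{Eis}}\cong H^1_{\mathrm{par}}$ and the fact that $e_U$ is by construction an idempotent of the parabolic Hecke algebra.) To close the gap along the lines you sketch, the natural route is through Assumption \ref{assump: multiplicity-one}: the generalized $f$-eigenspace of $H^1_c(Y(\CC),D_k^s)$ is two-dimensional and already accounted for by $H^1_{\mathrm{par}}$, so the $f$-generalized eigenspace of $H^1_{\mathrm{Eis}}$ in the fibre over $k$ vanishes; Nakayama over $e_U\mathbb{T}_{\mathrm{par},V}^{\leq h}\cong R_V$ (after shrinking $V$) then kills $e_U H^1_{\mathrm{Eis}}$, and the analogous statement for the boundary quotient disposes of your map $\beta$. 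Until that verification is written out, the proposal is an outline rather than a proof.
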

\begin{proof}
    By definition, the map \eqref{eq: Hc1-->H1} yields a Hecke-equivariant yields a Hecke-equivariant injection \[
        H_{\mathrm{Eis}}^1(Y(\CC), D_{\lambda_V}^s) \hookrightarrow H_c^1(Y(\CC), D_{\lambda_V}^s).
    \]
    Therefore, the restriction map gives rise to surjections \[
        \mathbb{T}_{c, V}^{\leq h} \twoheadrightarrow \mathbb{T}_{\mathrm{Eis}, V}^{\leq h}.
    \]
    That is, we may view $H_{\mathrm{Eis}}^1(Y(\CC), D_{\lambda_V}^s)$ as a $\mathbb{T}_{c, V}^{\leq h}$-module. We then define \[
        e_U H_{\mathrm{Eis}}^1(Y(\CC), D_{\lambda_V}^s)^{\leq h} \coloneq H_{\mathrm{Eis}}^1(Y(\CC), D_{\lambda_V}^s)^{\leq h} \otimes_{\mathbb{T}_{c, V}^{\leq h}} e_U\mathbb{T}_{\mathrm{par}, V}^{\leq h}.
    \]
    Consequently, we have a exact sequence \[
         e_U H_{\mathrm{Eis}}^1(Y(\CC), D_{\lambda_V}^s)^{\leq h} \rightarrow e_U H_{c}^1(Y(\CC), D_{\lambda_V}^s)^{\leq h} \rightarrow e_U H^1(Y(\CC), D_{\lambda_V}^s)^{\leq h}.
    \]
    However, since \[
        \frac{H_{c}^1(Y(\CC), D_{\lambda_V}^s)^{\leq h}}{H_{\mathrm{Eis}}^1(Y(\CC), D_{\lambda_V}^s)^{\leq h}} \cong H^1_{\mathrm{par}}(Y(\CC), D_{\lambda_V}^s)^{\leq h},
    \]
    one concludes that $e_U H_{\mathrm{Eis}}^1(Y(\CC), D_{\lambda_V}^s)^{\leq h} =0$. The desired injection then follows. 
\end{proof}

\begin{remark}
    Combing Proposition \ref{prop: comparison of cohomology groups} and Proposition \ref{prop: incusion from eUH^1_c--> eUH^1}, one sees that there exists a Hecke- and Galois-equivariant inclusion \[
        e_U H_{\mathrm{\acute{e}t}, c}^1(\overline{Y}, \mathscr{D}_{\lambda_V}^s)^{\leq h} \hookrightarrow e_U H_{\mathrm{\acute{e}t}}^1(\overline{Y}, \mathscr{D}_{\lambda_V}^s)^{\leq h}.
    \]
    Here, the slope-$\leq h$ decompositions are induced from  Proposition \ref{prop: comparison of cohomology groups}.
\end{remark}

\subsection{Rank-2 big Galois representations}\label{subsection: big Gal reps}
Let $f$ be a finite-slope eigenform of level $\Gamma_1(N)\cap \Gamma_0(p)$ and weight $k+2$. We make the following assumption: 

\begin{assumption}\label{assump: multiplicity-one}
    For $?\in \{\emptyset, c\}$, the Hecke-eigensystem defined by $f$ appears in $H^1_?(Y(\CC), D_k^s)$ and \[
        \dim H^i_?(Y(\CC), D_k^s)_{(\mathbb{T}=f)} = \dim H^i_?(Y(\CC), D_k^s)[\mathbb{T}=f] = \left\{ \begin{array}{ll}
            2, & \text{if }i=1 \\
            0, & \text{else}
        \end{array}\right. .
    \]
    Here, given a $\QQ_p$-vector space $M$ with a Hecke action, we write $M[\mathbb{T}=f]$ for the $f$-isotypic part and $M_{(\mathbb{T}=f)}$ for the generalized $f$-eigenspace.  
\end{assumption}

\begin{remark}\label{remark: multiplicity-one assumption}
    Since we are interested in critical-slope {\'e}tale points in the main body of the paper, by \cite[Theorem 1 \& Theorem 4]{bellaiche11a}, Assumption \ref{assump: multiplicity-one} is a reasonable assumption. Note that, as remarked in \cite[Sect. A4.2]{LR1}, \cite[Theorem 3.30]{bellaiche11a} also works for non-compactly supported cohomology. 
\end{remark}

\begin{proposition}\label{prop: big Galois representations}
    Keep the notations as above. Assume $f$ defines a point $x\in \mathcal{C}^0(K)$ (for some finite extension $K$ over $\QQ_p$) such that $\mathrm{wt}$ is {\'e}tale at $x$. Then, there exists an affinoid neighbourhood $U = \mathrm{Spa}(R_U, R_U^{\circ})\subset \mathcal{C}^0$ containing $x$ with corresponding Coleman family $\mathbf{f}$ and two Galois representations \[
        V(\mathbf{f}) \quad \text{ and }\quad V^c(\mathbf{f})
    \] over $R_U$ such that \begin{enumerate}
        \item the weight map gives an isomorphism $U \cong \mathrm{wt}(U)$;
        \item $V(\mathbf{f})$ and $V^c(\mathbf{f})$ are both free of rank $2$ over $R_U$;
        \item for any small-slope cuspidal eigenform $g$ that defines a point in $U$ and is not a $p$-stabilization of an Eisenstein series, the specializations of $V(\mathbf{f})$ and $V^c(\mathbf{f})$ at $g$ are the Galois representations \[
            V(g) = H^1_{\mathrm{\acute{e}t}}(\overline{Y}, \mathrm{Sym}^{\mathrm{wt}(y)}\QQ_p^2)[\mathbb{T}=g] \quad \text{ and }\quad V^c(g) = H^1_{\mathrm{\acute{e}t},c}(\overline{Y}, \mathrm{Sym}^{\mathrm{wt}(y)}\QQ_p^2)[\mathbb{T}=g]
        \]
        respectively; in fact, $V(g) = V^c(g)$.
    \end{enumerate}
\end{proposition}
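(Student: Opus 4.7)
The plan is to realize $V(\hf)$ and $V^c(\hf)$ as the étale analogues of the Betti modules $e_U H^1(Y(\CC), D_{\lambda_V}^s)^{\leq h}$ and $e_U H^1_c(Y(\CC), D_{\lambda_V}^s)^{\leq h}$ constructed in Section \ref{subsection: cusp. eigencurve}, and then to verify their freeness and specialization properties. First I would pick an open affinoid neighborhood $V \subset \mathcal{W}$ of $\kappa = \mathrm{wt}(x)$ and a slope $h \in \QQ_{\geq 0}$ with $(U(p), V, h)$ a slope datum adapted to $x$. Because $\mathrm{wt}$ is étale at $x$, after possibly shrinking $V$ the connected component $U$ of $\mathrm{wt}^{-1}(V)$ containing $x$ maps isomorphically onto $V$, so $R_U := \mathscr{O}_{\mathcal{C}^0}(U) \cong R_V$ is a smooth one-dimensional affinoid domain. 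Using Proposition \ref{prop: comparison of cohomology groups} and Corollary \ref{cor: comparison of cohomology groups} to transport the idempotent construction of Section \ref{subsection: cusp. eigencurve} to étale cohomology, I would set
\[
V(\hf) := e_U H^1_{\et}(\overline{Y}, \mathscr{D}_{\lambda_V}^s)^{\leq h}, \qquad V^c(\hf) := e_U H^1_{\et, c}(\overline{Y}, \mathscr{D}_{\lambda_V}^s)^{\leq h},
\]
each carrying a continuous $R_U$-linear action of $G_{\QQ}$ coming from the étale local system $\mathscr{D}_{\lambda_V}^s$.

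For the freeness of rank $2$, Hansen's slope-decomposition theorem makes both modules finitely generated over $R_U$, and since $R_U$ is a smooth one-dimensional affinoid domain, after one further shrinking I may assume $R_U$ is a principal ideal domain. It then suffices to show that each module is torsion-free of generic rank $2$. The generic rank is determined by specialization at $x$: by Assumption \ref{assump: multiplicity-one}, the fiber of each module at $x$ is two-dimensional, and the rank at the generic point of the PID $R_U$ must agree with this. Torsion-freeness of $V(\hf)$ follows from standard properties of the slope-decomposable module $H^1_{\et}(\overline{Y}, \mathscr{D}_{\lambda_V}^s)^{\leq h}$ over the reduced affinoid $R_V$, and torsion-freeness of $V^c(\hf)$ then follows from the injection $V^c(\hf) \hookrightarrow V(\hf)$ furnished by Proposition \ref{prop: incusion from eUH^1_c--> eUH^1}.

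The specialization statement in (3) would then be deduced from the small-slope control theorem of Stevens and its compactly-supported variant of Ash--Stevens: at a classical weight $k+2$ specialization of slope $\leq h < k+1$, the natural map $D_k^s \to \mathrm{Sym}^k L^2$ induces an isomorphism on slope-$\leq h$ parts of both compactly-supported and non-compactly-supported cohomology. Combined with Assumption \ref{assump: multiplicity-one}, this identifies the fibers of $V(\hf)$ and $V^c(\hf)$ at a small-slope cuspidal specialization $g \in U$ which is not a $p$-stabilization of an Eisenstein series with $V(g)$ and $V^c(g)$ respectively; and $V(g) = V^c(g)$ because the boundary cohomology accounting for the difference is entirely Eisenstein and so is killed by projection to the $g$-eigenspace. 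The main obstacle will be the torsion-freeness of $V^c(\hf)$, since in one-parameter families the compactly supported cohomology can in principle acquire torsion at Eisenstein-type degenerations --- precisely the phenomenon of interest in the body of the paper. The argument above circumvents this by first establishing torsion-freeness of the non-compactly-supported $V(\hf)$, where no cuspidality hypothesis is needed, and then bootstrapping via Proposition \ref{prop: incusion from eUH^1_c--> eUH^1} to conclude the same for $V^c(\hf)$.
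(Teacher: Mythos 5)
Your overall construction matches the paper's: you define $V(\hf)$ and $V^c(\hf)$ as $e_U H^1_{\et,?}(\overline{Y}, \mathscr{D}_{\lambda_V}^s)^{\leq h}$ via the comparison isomorphisms, and you handle part (3) by the small-slope control theorem exactly as the paper does. The problem is the argument for part (2). You reduce freeness to ``torsion-free of generic rank $2$ over a PID'' and then assert that torsion-freeness of $V(\hf)$ ``follows from standard properties of the slope-decomposable module $H^1(Y(\CC), D_{\lambda_V}^s)^{\leq h}$ over the reduced affinoid $R_V$.'' This is not a standard property and is precisely the nontrivial point: the module $H^1(Y(\CC), D_{\lambda_V}^s)$ is the degree-one cohomology of a perfect two-term complex (for the open curve $Y$, group cohomology of a free group), i.e.\ a cokernel of a map of large free modules, and such cokernels can perfectly well acquire $R_V$-torsion; indeed torsion in overconvergent cohomology over weight space is exactly the phenomenon one worries about near critical and Eisenstein points. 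Since your bootstrap for $V^c(\hf)$ rests on this unproved base case, the freeness claim is not established.

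There is a second, related gap: you say ``the fiber of each module at $x$ is two-dimensional'' by Assumption \ref{assump: multiplicity-one}, but that assumption concerns $H^1_?(Y(\CC), D_k^s)_{(\mathbb{T}=f)}$, the cohomology of the fiber of the coefficient complex, not the module-theoretic fiber $V^?(\hf)\otimes\kappa(x)$. Identifying the two requires a base-change argument (the relevant $\mathrm{Tor}_1$ of the degree-two cohomology must vanish), which again uses the concentration-in-degree-one part of the assumption. The paper resolves both issues at once: over the completed local ring $R_{V,k}$ it invokes the perfect-complex lemma of Barrera--Dimitrov--Jorza (their Lemma 2.9), which says that if the cohomology of the fiber is concentrated in a single degree and has dimension $2$ there, then the cohomology of the complex in that degree is free of rank $2$ and commutes with base change; freeness over $R_V$ then follows after shrinking $U$. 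If you want to keep your torsion-free-over-a-PID route, you would still have to prove torsion-freeness, and the only available tool is essentially this same lemma --- so the detour buys nothing. (Minor additional point: a smooth one-dimensional affinoid is not automatically a PID; you should shrink $V$ to a closed disc so that $R_V$ is a Tate algebra in one variable.)
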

\begin{proof}
    Since $\mathrm{wt}$ is {\'e}tale at $x$, the first assertion follows immediately. In what follows, we denote $\mathrm{wt}(U)$ by $V = \mathrm{Spa}(R_V, R_V^{\circ})$ and let $\lambda_V$ be the corresponding open affinoid weight. 

    Since $f$ defines a point $x\in \mathcal{C}^0(K)$, there exist $s \geq 1+s_V$ and $h\in \QQ_{\geq 0}$ such that $\mathscr{O}_{\mathcal{C}^0}(U) = e_U \mathbb{T}_{\mathrm{par}, V}^{\leq h}$. For $?\in \{\emptyset, c\}$, we define \[
        V^?(\mathbf{f}) = H_{\mathrm{\acute{e}t},?}^1(\overline{Y}, \mathscr{D}_{\lambda_V}^s)^{\leq h}[\mathbb{T}=\mathbf{f}] \coloneq e_U H_{\mathrm{\acute{e}t},?}^1(\overline{Y}, \mathscr{D}_{\lambda_V}^s)^{\leq h}
    \]
    To show that both $R_V$-modules are free over rank $2$, denote by $R_{V, k}$ the completed local ring at $k$ and let $\mathfrak{m}_k$ be the maximal ideal. Then, $H_{\mathrm{\acute{e}t},?}^1(\overline{Y}, \mathscr{D}_{\lambda_V}^s)^{\leq h}_{(\mathbb{T}=f)}$ is a finitely generated $R_{V, k}$-module. Note that \[
        H^i_?(Y(\CC), D_{\lambda_V}^s\otimes_{R_V} R_V/\mathfrak{m}_k)^{\leq h}_{(\mathbb{T}=f)} = H^i_?(Y(\CC), D_k^s)^{\leq h}_{(\mathbb{T}=f)}
    \]
    is concentrated in degree $1$ and has dimension $2$ when $i=1$ by Assumption \ref{assump: multiplicity-one}. Therefore, by \cite[Lemma 2.9]{barreradimitrovjorza}, we see that $H^1_?(Y(\CC), D_{\lambda_V}^s)_{(\mathbb{T}=f)}^{\leq h}$ is free of rank $2$ over $R_{V, k}$. Hence, after shrinking $U$ if necessary and applying Proposition \ref{prop: comparison of cohomology groups}, we conclude that $V^?(\mathbf{f})$ is a rank-$2$ Galois representation over $R_V$.

    Finally, the third assertion follows from specialization and the control theorem \cite[Theorem 3.2.5]{hansen14}.
\end{proof}

Combining Proposition \ref{prop: incusion from eUH^1_c--> eUH^1} and Proposition \ref{prop: big Galois representations}, we arrive at the following corollary: 

\begin{corollary}\label{cor: injection of big Galois representations}
    Keep the notations as above. There is a natural inclusion of Galois representations \[
        V^c(\mathbf{f}) \hookrightarrow V(\mathbf{f})
    \]
    over $R_{V}$.
\end{corollary}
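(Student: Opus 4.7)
The plan is to simply assemble the two previous results. Starting from the natural map of sheaves, the short exact sequence
\[ 0 \to j_!\mathscr{D}_{\lambda_V}^{s,\circ} \to \mathscr{D}_{\lambda_V}^{s,\circ} \to i_*\left(\mathscr{D}_{\lambda_V}^{s,\circ}|_{\mathrm{cusps}}\right) \to 0 \]
(where $j$ and $i$ are the inclusions of $\overline{Y}$ and the cusps in the compactification) gives a canonical Hecke- and Galois-equivariant morphism $H^1_{\mathrm{\acute{e}t},c}(\overline{Y}, \mathscr{D}_{\lambda_V}^s) \to H^1_{\mathrm{\acute{e}t}}(\overline{Y}, \mathscr{D}_{\lambda_V}^s)$. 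I would first take slope-$\leq h$ parts (this is preserved by any Hecke-equivariant morphism since the slope decomposition is intrinsic to the $U(p)$-action) and observe that the idempotent $e_U$ is cut out by the same Hecke operators in both $\mathbb{T}_{c,V}^{\leq h}$ and $\mathbb{T}_V^{\leq h}$, so applying $e_U$ on both sides produces a $R_V$-linear, Galois-equivariant map
\[ e_U H^1_{\mathrm{\acute{e}t},c}(\overline{Y}, \mathscr{D}_{\lambda_V}^s)^{\leq h} \longrightarrow e_U H^1_{\mathrm{\acute{e}t}}(\overline{Y}, \mathscr{D}_{\lambda_V}^s)^{\leq h}, \]
i.e., $V^c(\mathbf{f}) \to V(\mathbf{f})$.

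Next I would establish injectivity. By Proposition \ref{prop: comparison of cohomology groups} and Corollary \ref{cor: comparison of cohomology groups}, the Artin-type comparisons intertwine the natural maps in Betti and \'etale cohomology, and they are Hecke-equivariant by construction (the Hecke correspondences are algebraic and induce compatible actions on both sides). Therefore the question of injectivity of $V^c(\mathbf{f}) \to V(\mathbf{f})$ is equivalent to the injectivity of the corresponding map
\[ e_U H^1_c(Y(\CC), D_{\lambda_V}^s)^{\leq h} \longrightarrow e_U H^1(Y(\CC), D_{\lambda_V}^s)^{\leq h}, \]
which is precisely the content of Proposition \ref{prop: incusion from eUH^1_c--> eUH^1}. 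Since both the source and the target are free $R_V$-modules of rank $2$ by Proposition \ref{prop: big Galois representations}, there is no additional subtlety coming from possible torsion.

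There is no real obstacle in this argument; the substantive work has already been carried out in Proposition \ref{prop: incusion from eUH^1_c--> eUH^1}, where the vanishing of the Eisenstein part after applying $e_U$ (which uses that $U$ lies on the cuspidal component $\mathcal C^0$) guarantees injectivity. The only mildly delicate point is to make sure that the constructions of $V(\mathbf{f})$ and $V^c(\mathbf{f})$ as tensor products over the \emph{different} Hecke algebras $\mathbb{T}_V^{\leq h}$ and $\mathbb{T}_{c,V}^{\leq h}$, respectively, are compatible in the sense that both tensor products admit a well-defined map to the common target $e_U H^1_{\mathrm{\acute{e}t}}(\overline{Y}, \mathscr{D}_{\lambda_V}^s)^{\leq h}$; this follows from the fact that both Hecke algebras surject onto $e_U \mathbb{T}_{\mathrm{par},V}^{\leq h} = \mathscr{O}_{\mathcal C^0}(U)$, through which the actions factor after applying $e_U$.
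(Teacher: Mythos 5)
Your proposal is correct and follows essentially the same route as the paper: the corollary is obtained by combining \cref{prop: incusion from eUH^1_c--> eUH^1} (injectivity of $e_U H^1_c \to e_U H^1$, via the vanishing of the Eisenstein part after applying $e_U$) with \cref{prop: big Galois representations} and the comparison isomorphisms of \cref{prop: comparison of cohomology groups}. The extra points you flag (Hecke-equivariance of the comparison, compatibility of the two tensor-product constructions through $e_U\mathbb{T}_{\mathrm{par},V}^{\leq h}$) are exactly the implicit content of the paper's one-line deduction.
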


\begin{remark}\label{remark: dual construction}
    We close this appendix by briefly discuss the constructions for $V(\mathbf{f})^*$ and $V^c(\mathbf{f})^*$. For any weight $(R_V, \lambda_V)$ and $s\geq 1+s_V$, let $w_p = \begin{pmatrix} 0 & -1\\ p & 0\end{pmatrix}$ and \[
        A_{\lambda_V}^{s, \circ, *} \coloneq \left\{\phi: w_p \mathrm{Iw}_{\GL_2} \rightarrow R_V: \begin{array}{l}
             \phi(\gamma\beta) = \lambda_V(\beta)\phi(\gamma) \,\, \text{for all }(\gamma, \beta)\in w_p\mathrm{Iw}_{\GL_2}\times B(\ZZ_p)   \\
             \phi|_{w_pN_1^{\mathrm{opp}}} \in C^r(\ZZ_p, \ZZ_p)\widehat{\otimes}_{\ZZ_p} R_V^{\circ}
        \end{array}\right\}.
    \]
    Then, it admits a twisted $\mathrm{Iw}_{\GL_2}$-action by \[
        (\gamma * f)(\alpha) = f(w_p \gamma w_p^{-1}\alpha).
    \]
    One can similarly define $A_{\lambda_V}^{s, *}$, $D_{\lambda_V}^{s, \circ, *}$, and $D_{\lambda_V}^{s, *}$ as before with the corresponding $\mathrm{Iw}_{\GL_2}$-action. By replacing $D_{\lambda_V}^s$ by $D_{\lambda_V}^{s, *}$, working with {\'e}tale cohomology groups $H^1_{\mathrm{\acute{e}t}, ?}(\overline{Y}, \mathscr{D}_{\lambda_V}^{s, *}(1))$ (for $?\in \{\emptyset, c\}$), and applying the strategy above with respect to the dual Hecke operator $U'(p)$, one then constructs $V(\mathbf{f})^*$ and $V^c(\mathbf{f})^*$. We leave the details to the reader. 
\end{remark}

\end{appendix}

 \bibliographystyle{amsalphaurl}
 \bibliography{references}

 \end{document}